\newcommand{\daJKO}{daJKO\ }
\DeclareMathOperator*{\argmin}{arg ~min}         %
\newcommand{\B}{\mathcal{B}}
\newcommand{\N}{\mathbb{N}}
\newcommand{\R}{\mathbb{R}}
\newcommand{\dx}{\,\textup{d}x}
\newcommand{\du}{\,\textup{d}u}
\newcommand{\dt}{\,\textup{d}t}
\newcommand{\dd}[1]{\,\textup{d}{#1}}
\title{Data driven gradient flows\thanks{%
Received... Accepted... Published online on... Recommended by....
}}
\author{Jan-Frederik Pietschmann\footnotemark[2]
        \and Matthias Schlottbom\footnotemark[3]}
\shorttitle{Data driven gradient flows} 
\begin{document}
\maketitle

\renewcommand{\thefootnote}{\fnsymbol{footnote}}

\footnotetext[2]{Fakult\"at f\"ur Mathematik, Technische Universit\"at Chemnitz Reichenhainer Stra{\ss}e 41, 09126 Chemnitz, Germany. jfpietschmann@math.tu-chemnitz.de}
\footnotetext[3]{Department of Applied Mathematics, University of Twente, Postbus 217, 7500 AE Enschede, Netherlands. m.schlottbom@utwente.nl}

\begin{abstract}
We present a framework enabling variational data assimilation for gradient flows in general metric spaces, based on the minimizing movement (or Jordan-Kinderlehrer-Otto) approximation scheme. After discussing stability properties in the most general case, we specialise to the space of probability measures endowed with the Wasserstein distance. This setting covers many non-linear partial differential equations (PDEs), such as the porous medium equation or general drift-diffusion-aggregation equations, which can be treated by our methods independent of their respective properties (such as finite speed of propagation or blow-up).
We then focus on the numerical implementation of our approach using an primal-dual algorithm. The strength of our approach lies in the fact that by simply changing the driving functional, a wide range of PDEs can be treated without the need to adopt the numerical scheme. We conclude by presenting detailed numerical examples.
\end{abstract}

\begin{keywords}
gradient flows, variational data assimilation, primal-dual algorithms, Wasserstein distance
\end{keywords}

\begin{AMS}
49N45 (Inverse problems in optimal control), 35R30 (Inverse problems for PDEs), 49N45 (Inverse problems in optimal control), 49Q22 (Optimal transportation)
\end{AMS}




\section{Introduction}
The aim of this work is to develop an data assimilation approach for gradient flows (GFs) in metric spaces. In  particular we are interested in time-dependent nonlinear partial differential equations that are GFs in the space of probability measures endowed with the 2-Wasserstein metric. Data assimilation is useful in this context if the model is not completely specified, for instance if the initial datum is unknown or certain model parameters are not exact. 
%

%
Our approach combines a time-discrete variational scheme, the famous Jordan-Kinderlehrer-Otto (JKO) or minimizing movement scheme, with additional data fidelity terms in order to  overcome model uncertainties and to improve the solution in each time step.

Let us briefly describe our idea in the finite dimensional case. Given $u_0 \in \R^d$, $F \in C^{1,1}(\R^d; \R)$, we consider the corresponding gradient flow
\begin{equation}\label{eq:grad_flow_classic_Rd}
\begin{aligned}
    u'(t) &= -\nabla F(u(t)), \quad t> 0,\\
    u(0) &= u_0.
    \end{aligned}
\end{equation}
For a fixed time step $\tau > 0$ and initial condition $u_0$, we may recursively define a sequence $(u^\tau_k)_{k\in\N}$ using the implicit Euler scheme, which amounts to solving the following non-linear system of equations
\begin{align}\label{eq:implicitEuler}
    u_{k+1}^\tau = u_k^\tau - \tau \nabla F(u^\tau_{k+1}), \quad k=0,1,\ldots
\end{align}
A well-known, but crucial, observation is that \eqref{eq:implicitEuler} is the first order optimality condition of the following minimization problem
\begin{align}\label{eq:JKO}
u_{k+1}^\tau = \argmin_{u\in\R^d} ( F(u) + \frac{1}{2\tau}\|u-u^\tau_k\|_2^2 ),
\end{align}
which is a variational interpretation of the implicit Euler time-stepping scheme.
%
%

In practically relevant situations, the model might be specified incompletely, that is the functional $F$ or the initial condition $u_0$ are not known or known with uncertainties.
In such situations, we may, however, have access to data points $y_k$ that are obtained from the (true) state $u$ by applying a linear measurement operator $\B$, i.e.,
$$
y_{k} = \B u(t_k) +\eta_k, \quad t_k=k\tau, \quad k=1,2,\ldots,
$$
where $\eta_k$ models (possibly random) measurement errors.
Motivated by the methodology of variational data assimilation, we use this information to modify the objective functional in \eqref{eq:JKO} by adding an additional \emph{data fidelity term} as follows
\begin{align}\label{eq:JKO_data}
u_{k+1}^\tau = \argmin_{u}  F(u) + \frac{1}{2\tau}\left(\|u-u^\tau_k\|_2^2+ \frac{\|\mathcal{B}(u)-v_{k+1}\|^2}{\theta} \right) .
\end{align}
Here, $\theta \in (0,\infty)$ is a weighting parameter. 
In the limit $\theta \to 0^+$, we formally obtain $u^\tau_{k+1}$ s.t. $\B(u^\tau_{k+1}) = v_{k+1}$, i.e., the flow is completely data driven; while for $\theta \to + \infty$, the data has no more influence on $u^\tau_{k+1}$. 
We note that \eqref{eq:JKO_data} is local in time in the sense that only estimations or observations for the next time step are required. Such an approach is usually referred to as  \emph{filtering problem} in the data assimilation literature, \cite{Law2015_data_assimilation_book}.
Equation \eqref{eq:JKO_data} allows for different interpretations: We may think of performing the JKO scheme for the functional $F$ in the modified, data induced distance measure
$$
d_{\mathrm{dat}}(u,u_k) := \|u-u^\tau_k\|_2^2+ \frac{\|\mathcal{B}(u)-v_{k+1}\|^2}{\theta}.
$$
Clearly, $d_{\mathrm{dat}}(u,u_k)$ is no longer a metric as neither $d_{\mathrm{dat}}(u,u) =  0$ nor does the triangle inequality hold for the first argument. 
The second interpretation, which we will exploit in the actual computations later, is a JKO scheme in the original metric but for the modified (and now $\tau$- and data-dependent) functional
$$
E_{\tau,\mathrm{dat}} = F(u) + \frac{1}{2\tau}\frac{\|\mathcal{B}(u)-v_{k+1}\|^2}{\theta}.
$$
Finally, we can also interpret \eqref{eq:JKO_data} as Tikhonov regularization with data term $\|\mathcal{B}(u)-v_{k+1}\|^2$,   regularizer  $(\tau F+\|u-u_k^\tau\|_2^2/2)$, and $\theta$ takes the role of a regularization parameter. The latter interpretation might be useful for the analysis of the minimization problem \eqref{eq:JKO_data}.
Another interesting remark is that if one chooses $\theta$ such that $\theta\tau=\mathrm{const}$, one recovers a nudging method \cite{Anthes1974_NudgingFirst}, which is an example of a four-dimensional data assimilation method, see Remark~\ref{rem:control_interpretation} for further details.

The connection to non-linear evolution partial differential equations is due to the seminal works of Otto \cite{Otto1998FirstGradFlow,Otto2001GeometryPME} and Otto, Jordan and Kinderlehrer \cite{JKO1998}. They showed that replacing the $L^2$-Norm in \eqref{eq:JKO} by the $2$-Wasserstein metric $W_2$ over the space of probability measures $P(\Omega)$ over some set $\Omega \subset \R^d$, the resulting iterates converge to solutions to equations of the form 
\begin{equation}\label{eq:pde_grad_flow}
\begin{aligned}
    \partial_t u &= \nabla \cdot ( u \nabla \frac{\delta F}{\delta u}),\quad \Omega \times (0,T),\\
    u(0) &= u_0\text{ in } \Omega,
\end{aligned}
\end{equation}
for some final time $T$ and supplemented with no flux boundary conditions
\begin{align*}
     u \nabla \frac{\delta F}{\delta u}\cdot n = 0, \text{ on } \partial\Omega \times (0,T),
\end{align*}
with $n$ denoting the outward normal vector.
Typical functionals $F$ of interest are of the form
\begin{align}\label{eq:functional_F}
F(u) = \int_\Omega U \left(\frac{du}{dx}(x)\right) \dx + \int_\Omega V(x) \du(x) + \frac{1}{2}\int_\Omega\int_\Omega W(x,y)\du(x)\du(y),
\end{align}
where $\frac{du}{dx}$ denotes the Radon-Nikodym derivative with respect to the Lebesgue measure and the convention is to set first integral to $+\infty$ if $u$ is not absolutely continuous. In \eqref{eq:functional_F}, $U:\R\to \R$ denotes the internal energy, $V:\Omega \to \R$ a potential and $W:\Omega\times\Omega \to \R $ an interaction potential.
In this setting, our approach becomes
\begin{align}\label{eq:daJKO_intro}
u_{k+1}^\tau = \argmin_{u} ( F(u) + \frac{1}{2\tau}\left(W_2^2(u,u^\tau_k) + \frac{d_D^2(\mathcal{B}(u),v_{k+1}^\delta)}{\theta} \right),
\end{align}
with $d_D$ a data fidelity metric. We call \eqref{eq:daJKO_intro} \daJKO - scheme in the following. 
Before stating the contributions of this work, let us briefly review related work.

%
\subsection*{Related work}\mbox{}

%
\noindent \emph{$W_2$ gradient flows}: The interpretation of \eqref{eq:pde_grad_flow} as Wasserstein gradient can be used to show existence of solutions via the limit $\tau \to 0$ in \eqref{eq:JKO} but also to obtain results on the long time behaviour of solutions, see for example \cite{Carrillo2003,Markowich99onthe}. More recently there are efforts of understanding a larger class of evolution equations as gradient flows, e.g. equations on discrete state spaces, \cite{MaasGradFlowEntropyFiniteMarkov2011,ChowFokkerPlanckMarkovGraph2012}, graph structures \cite{EPSS2021}, equations featuring non-local terms such as the Boltzmann \cite{Erbar2016AGF} or aggregation equation \cite{Esposito2021}. Another popular research direction are equations with reaction terms or non-homogeneous boundary conditions where the total mass is not conserved in time and the Wasserstein distance has to be replaced by Fisher-Rao-type distances that are finite even for two measures having different mass \cite{Mielke_2011,Burger2020}.
A detailed overview of the general theory in metric spaces can be found in \cite{AmbrosioGigliSavare2008}.
\emph{Computational optimal transport}: To implement our \daJKO scheme, we need to be able to efficiently compute the $2$-Wasserstein distance. In recent years, many algorithms for solving this optimal transport problem have been developed. Different approaches were taken, e.g. based on the Monge-Amp\`ere equation \cite{Benamou2014_MongeAmpere}, for semi-discrete problems where the target space is finite \cite{Kitagawa2019_semidiscrete}, or scaling algorithms for regularised problems obtained by adding a entropic term to the static Wasserstein functional \cite{Cuturi2013_lightspeed,Peyre2019_book}.
We focus on a class of algorithms that is based on the dynamic formulation of optimal transport due to Benamou and Brenier \cite{BenamouBrenier2000}, see also \eqref{eq:Benamou_Brenier} below. For the approximation of the Wasserstein distance, the original paper \cite{BenamouBrenier2000} used an augmented Lagrangian (ALG) approach to enforce the divergence constraints, which has been extended in \cite{Benamou2016_ALG2-JKO} to a convex formulation for the corresponding JKO scheme. 
The ALG approach also been adopted to finite volumes in \cite{Cances2020_FVSchemeJKO}, see also \cite{Cances2019_MultiphaseJKO} for an application to multiphase problems. 
In \cite{Papadakis2014} proximal splitting algorithms are combined with staggered grid discretizations to enforce the divergence constraints; proximal methods have also been combined with finite element discretizations \cite{Albi_2017} for the discretization of $L^2$-gradient flows.
Starting also from the dynamic formulation of the Wasserstein distance, \cite{Carrillo2022_PrimalDual} employ piecewise constant approximations and proximal splitting algorithms to discretize the JKO scheme. Notably, in \cite{Carrillo2022_PrimalDual}, the divergence constraints are enforced only in a relaxed sense. The resulting formulation then involves the minimization of a sum of three convex functionals, which is done by the algorithm developed in \cite{Yan2018}.

\emph{Data assimilation}: Our approach is closest to $3$D variational data assimilation methods (3DVar) in that is relies on minimizing a functional containing a data term in each time step. Being purely deterministic it is, however, different from methods which aim to obtain posterior probability distributions, see \cite{Law2015_data_assimilation_book} for an overview. As mentioned above, we focus on the filtering problem to improve the quality of the numerical solutions based on measurements at the current time step. The Wasserstein metric recently started to become an important tool in the analysis of particle filters, \cite{Reich2013_Wasserstein}. There are also several works which use it in a variational setting which is closer to our situation \cite{Tamang2020_Wasserstein}. Performing data assimilation in general metric space does not seem to have been considered so far, but see \cite{Tamang2021_Metric} for a related problem. 
Let us also mention the connection to minimisation based formulations of inverse problems, where both model and data fidelity term a combined in a variational approach \cite{kaltenbacher2018minimization}. Finally, let us remark that \eqref{eq:JKO_data} is, at least in the Euclidean setting, connected to an closed loop control technique called \emph{nudging}, \cite{Anthes1974_NudgingFirst}, see Remark~\ref{rem:control_interpretation} for further details. 

%
\subsection*{Contributions}
In this work we 
\begin{itemize}
    \item Introduce a methodology for variational data assimilation that can be applied to gradient flows in metric spaces.
    \item Consider the particular case of the space of probability measures endowed with the $2$-Wasserstein distance, which includes many time-dependent nonlinear partial differential equations with linear mobility. We provide a uniform framework mostly independent of the properties (such as regularity, finite time blow-up, etcc.) of the specific PDE.
    \item Introduce a numerical algorithm  based on the recent work by \cite{Carrillo2022_PrimalDual}, but, instead of a piecewise constant approximation, uses a finite difference approximation to discretize the divergence constraints. Similar to \cite{Carrillo2022_PrimalDual}, we obtain a positivity preserving scheme that avoids the solution of linear systems. This again yields a uniform method that can be applied to a large class of non-linear PDEs without changing the algorithm. 
    \item Test our method numerically on different examples of non-linear PDEs.
\end{itemize}

The paper is structured as follows: In Section \ref{sec:prelim}, we present some background on optimal transport, the $p$-Wasserstein distances and their dynamic formulation as well as details on the variational scheme in general metric spaces. Section \ref{sec:daJKO} presents the data driven variational approach and some of its properties while Section \ref{sec:numerics} is devoted to a detailed description of the numerical scheme. Finally, Section \ref{sec:examples} contains several numerical studies.

\section{Preliminaries}\label{sec:prelim}

We call $\Omega \subset \R^d$ an open, bounded and connected set. By $P(\Omega)$ we denote the space of probability measures. 

\subsection{Optimal transport} 
Given $u_0,u_1 \in P(\Omega)$, optimal transport aim at finding a transport plan $\pi \in P(\Omega \times \Omega)$ that realizes the transport of $u_0$ onto $u_1$ as minimal cost with respect to a cost functional $c:\Omega \times \Omega \to \R_+$. Formally speaking, $c(x,y)\dd{\pi}(x,y)$ is the cost for transporting an infinitesimal amount of mass from $x$ to $y$. Evaluating the total cost at the optimal transport plan gives rise to a distance between probability measures, the $p$-Wasserstein distance is defined as 
\begin{align}\label{eq:Wasserstein_static}
    W_p^p(u_0, u_1) = \inf_{\pi \in \Pi(u_0,u_1)}\int_{\Omega\times \Omega}\frac{1}{p} |x-y|^p \;\dd{\pi}(x,y),
\end{align}
where the infimum is taken over all $\pi\in P(\Omega \times \Omega)$ having marginals $u_0$ and $u_1$, respectively, i.e.,
\begin{align*}
    \Pi(u_0,u_1) = \{ \pi \in P(\Omega \times \Omega) \; : \; &\pi(A,\Omega) = u_0(A)\text{ and } \pi(\Omega, B) = u_1(B)\\ &\text{for all measurable } A,\,B \subset \Omega \}.
\end{align*}
With this distance, $(P(\Omega),W_p)$ becomes a metric space and convergence with respect to $W_p$ metrizes weak-$*$ convergence of measures. 
Let us remark that in the space $(P(\Omega), W_p)$, constant speed geodesics are curves $\rho : [0,1] \to P(\Omega)$ satisfying
\begin{align*}
    W_p(\rho(s), \rho(t)) = (t-s)W_p(\rho(0),\rho(1))\quad \forall\, 0 \le s \le t \le 1.
\end{align*}
Given two measures $u_0,\,u_1 \in P(\Omega)$, a constant speed geodesic connecting them can be constructed as follows: Denoting $\pi \in \Pi(u_0,u_1)$ an optimal transport plan from $u_0$ to $u_1$, the constant speed geodesic is defined by
\begin{align}\label{eq:geodesic_interpolation}
\rho(t) = ((1-t)x + ty)_{\#}\pi.
\end{align}

In the following, we will restrict ourselves to the case $p=2$, and introduce a dynamic version of the Wasserstein distance due to Benamou and Brenier, \cite{BenamouBrenier2000}, which will later serve as a basis for our numerical scheme. The idea to select, among all absolutely continuous (with respect to $W_2$) curves $\rho:[0,1] \to P(\Omega)$ connecting two given measures $u_0,u_1 \in P(\Omega)$, the one which has the least kinetic energy.
%
For a rigorous formulation, we introduce the convex lower-semicontinuous function \cite{BenamouBrenier2000}
\begin{align*}
    \Phi(\rho,m):=\left\{\begin{array}{ll}
\frac{|m|^{2}}{2 \rho} & \text { if } \rho>0, \\
0 & \text { if }(\rho, m)=(0,0), \\
+\infty & \text { otherwise. }
\end{array}\right.
\end{align*}
Given $\rho \in P(\Omega)$, $m \in P(\Omega^d)$ we then define the \emph{action density}
\begin{align*}
    \mathcal{A}(\rho,m) = \int_{ \Omega} \Phi(\frac{\dd{\rho}}{\dd{\sigma}}, \frac{\dd{m}}{\dd{\sigma}})\;\dd{\sigma}(x),
\end{align*}
where $\sigma$ is a reference measure such that $\rho,\,m$ are absolutely continuous with respect to $\sigma$. Due to the  one-homogeneity of $\Phi$, the value of $\mathcal{A}$ is independent of the choice of $\sigma$.
We have 
\begin{align}\label{eq:Benamou_Brenier}
    W_2^2(u_0, u_1) = \inf_{ (\rho, m) \in \mathrm{CE}(u_0,u_1)} \int_0^1  \mathcal{A}(\rho, m)\;\dt.
\end{align}
%
Here, $(\rho,m)\in P(\Omega)\times P(\Omega)^d$ belong to the constrained set $\mathrm{CE}(u_0,u_1)$ if
\begin{align}
    \partial_t \rho + {\rm div}(m) = 0 &\quad\text{on } \Omega\times (0,1), \label{eq:continuity}\\
    m\cdot n=0 &\quad\text{on } \partial\Omega\times (0,1), \label{eq:continuity_bc}\\
    \rho(0)=u_0 &\quad\text{on }\Omega,\label{eq:continuity_initial}
    \\
    \rho(1)=u_1 &\quad\text{on }\Omega,\label{eq:continuity_end}
\end{align}
where $\nu$ denotes the unit normal vector field pointing outwards of $\Omega$. This formulation constitutes a convex optimisation problem with linear constraints \cite{BenamouBrenier2000}.

\subsection{Minimizing movement / JKO scheme}
Let us consider the minimzing movement scheme \eqref{eq:JKO} in a general metric space $(X,d)$ where it reads as  
\begin{align}\label{eq:JKO_general}
u_{k+1}^\tau \in \argmin_{u \in X} ( F(u) + \frac{1}{2\tau}{d^2(u^\tau_k,u)} ).
\end{align}

Existence of iterates in this setting is usually shown via the direct method of calculus of variations, i.e., using the fact that sublevel sets are compact, to show that every minimizing sequence admits a converging subsequence and showing that, as a consequence of lower semicontinuity, its limit is a minimizer. Here, one can either assume compactness already in the topology induced by $d$ or in a weaker topology in which $d$ is still lower semicontinuous (e.g. weak convergence in a Sobolev space).
Before we specialize to the Wasserstein case, we briefly discus the stability properties of iterates. This is relevant for us since we are interested in a setting where both the initial datum as well as the functional $F$ may not be known exactly. The following lemma shows that iterates are stable under perturbations of the functional $F$ or the previous time step.
\begin{lemma}\label{lem:JKO_stability} Let $\delta_F>0$, and let $F_1,\, F_2:  X \to \R_+ \cup \{+\infty\}$ be two  functionals satisfying 
$$
\sup_{u} |F_1(u) - F_2(u)| \le \delta_F.
$$
Furthermore, fix two elements $u_k^{\tau,1},\, u_k^{\tau,2} \in X$.
Denoting by $u_{k+1}^{\tau,1}$ and $u_{k+1}^{\tau,2}$ the solutions of \eqref{eq:JKO_general} with $F=F_1$, $F=F_2$ and $u_k^\tau=u_{k+1}^{\tau,1}$ and $u_k^\tau=u_{k+1}^{\tau,2}$, respectively. Then, we have the estimate
\begin{align}\label{eq:JKO_stability}
d^2(u_{k+1}^{\tau,1},u_{k+1}^{\tau,2}) \le 9 d^2(u_k^{\tau,1},\, u_k^{\tau,2}) + 8\tau\delta_F + 4\tau(\|F_1\|_{C^0} + \|F_2\|_{C^0}).
\end{align}
\end{lemma}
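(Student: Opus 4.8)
The plan is to work only with the three structural facts available in a bare metric space: the defining minimality of each iterate in \eqref{eq:JKO_general}, the nonnegativity of the functionals, and the triangle inequality. The absence of an inner product means squared distances cannot be expanded, so every estimate has to be routed through the triangle inequality, and the constants will be correspondingly non-sharp.

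The key step I would isolate first is a \emph{cross--anchor} bound controlling how far each new iterate can move. Testing the problem defining $u_{k+1}^{\tau,1}$ (functional $F_1$, anchor $u_k^{\tau,1}$) against the competitor $u_k^{\tau,2}$, dropping the nonnegative term $F_1(u_{k+1}^{\tau,1})\ge 0$, and using $F_1(u_k^{\tau,2})\le F_2(u_k^{\tau,2})+\delta_F\le \|F_2\|_{C^0}+\delta_F$, I obtain
\begin{align*}
 d^2(u_k^{\tau,1},u_{k+1}^{\tau,1}) \le d^2(u_k^{\tau,1},u_k^{\tau,2}) + 2\tau\big(\|F_2\|_{C^0}+\delta_F\big),
\end{align*}
and symmetrically for the second iterate with $F_1$ and $F_2$ interchanged. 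This single inequality is where both the $\tau\|F_i\|_{C^0}$ and the $\tau\delta_F$ contributions to \eqref{eq:JKO_stability} originate; the uniform closeness $\sup_u|F_1(u)-F_2(u)|\le\delta_F$ enters exactly here, encoding the effect of an imperfectly known driving functional.

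I would then assemble the estimate by the triangle inequality through both previous iterates,
\begin{align*}
 d(u_{k+1}^{\tau,1},u_{k+1}^{\tau,2}) \le d(u_{k+1}^{\tau,1},u_k^{\tau,1}) + d(u_k^{\tau,1},u_k^{\tau,2}) + d(u_k^{\tau,2},u_{k+1}^{\tau,2}),
\end{align*}
squaring with the elementary bound $(x+y+z)^2\le 3(x^2+y^2+z^2)$ and inserting the two cross--anchor estimates. The factor $3$ produced by this inequality multiplies the explicit $d^2(u_k^{\tau,1},u_k^{\tau,2})$ together with the two further copies of $d^2(u_k^{\tau,1},u_k^{\tau,2})$ hidden inside the cross--anchor bounds, which is precisely what inflates the coefficient of $d^2(u_k^{\tau,1},u_k^{\tau,2})$ to $9$; collecting the remaining $\tau$-terms then yields an estimate of the form \eqref{eq:JKO_stability}. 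As a variant, testing each problem against its own anchor gives the simpler $d^2(u_k^{\tau,i},u_{k+1}^{\tau,i})\le 2\tau\|F_i\|_{C^0}$, while adding the two comparison inequalities obtained by using each iterate as a competitor in the other problem re-derives the $\delta_F$-dependence independently; the precise numerical constants $9$, $8$, $4$ depend on how these building blocks are combined.

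The main obstacle, and the reason the estimate is lossy, is the lack of Hilbert structure. In $\R^d$ the map $u\mapsto F(u)+\tfrac{1}{2\tau}\|u-u_k\|_2^2$ is $\tfrac1\tau$-strongly convex, so the comparison step linearises into a genuine inner-product inequality and yields Lipschitz stability with a constant close to $1$; equivalently one would invoke the parallelogram law to simplify the cross terms. In a general metric space neither device is available, every squared distance must be passed through the triangle inequality, and this is what forces the non-sharp coefficient $9$ rather than something near unity, and makes the bookkeeping that lands exactly on the stated constants the only genuinely delicate part of the argument.
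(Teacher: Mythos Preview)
Your approach is correct and follows the same skeleton as the paper: test each minimization problem against the \emph{other} anchor $u_k^{\tau,i}$, then chain the three distances via the triangle inequality and square with $(a+b+c)^2\le 3(a^2+b^2+c^2)$, which produces the coefficient $9$. The only difference is bookkeeping in the functional terms: you discard $F_j(u_{k+1}^{\tau,j})\ge 0$ immediately and bound $F_1(u_k^{\tau,2})$ by $\|F_2\|_{C^0}+\delta_F$, whereas the paper retains the difference $F_j(u_k^{\tau,i})-F_j(u_{k+1}^{\tau,j})$ and then inserts and subtracts mixed terms to isolate four copies of $|F_1-F_2|\le\delta_F$ before bounding the residual by $2(\|F_1\|_{C^0}+\|F_2\|_{C^0})$. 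Your route is slightly more direct and yields $12\tau\delta_F+6\tau(\|F_1\|_{C^0}+\|F_2\|_{C^0})$ rather than the stated $8\tau\delta_F+4\tau(\|F_1\|_{C^0}+\|F_2\|_{C^0})$; as you anticipated, the exact constants depend on how the pieces are assembled (and in fact the paper's own intermediate step carries a factor $3$ from the squaring that is not fully propagated to the displayed constants).
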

\begin{proof}
Using the optimality of $u^{\tau,i}_{k+1}$, $i=1,2$ in \eqref{eq:JKO_general} yields
\begin{align*}
F_1(u_{k+1}^{\tau,1}) + \frac{1}{2\tau} d^2(u_{k+1}^{\tau,1},u^{\tau,1}_k) &\le F_1(u_{k}^{\tau,2}) + \frac{1}{2\tau}   d^2(u_{k}^{\tau,2},u^{\tau,1}_k),\\
F_2(u_{k+1}^{\tau,2}) + \frac{1}{2\tau} d^2(u_{k+1}^{\tau,2},u^{\tau,2}_k) &\le F_2(u_{k}^{\tau,1}) + \frac{1}{2\tau}   d^2(u_{k}^{\tau,1},u^{\tau,2}_k),    
\end{align*}
which implies
\begin{align*}
    d^2(u_{k+1}^{\tau,j}, u_{k}^{\tau,j}) \le 2\tau(F_j(u_k^{\tau,i}) - F_j(u_{k+1}^{\tau,j})) + d^2(u_{k}^{\tau,i}, u_{k}^{\tau,j}),\quad i,\,j=1,2,\; i\neq j.
\end{align*}
Using the triangle inequality, this allows to estimate
\begin{align*}
    d^2(u_{k+1}^{\tau,1},u_{k+1}^{\tau,2}) & \le 3(d^2(u_{k+1}^{\tau,1},u_{k}^{\tau,1}) + d^2(u_{k+1}^{\tau,2},u_{k}^{\tau,2}) + d^2(u_{k}^{\tau,1},u_{k}^{\tau,2}))\\ 
    &\le 9 d^2(u_{k}^{\tau,1},u_{k}^{\tau,2}) + 2\tau(F_1(u_k^{\tau,2}) - F_1(u_{k+1}^{\tau,1}) + F_2(u_k^{\tau,1}) - F_2(u_{k+1}^{\tau,2}))
\end{align*}
We further estimate the second term on the right hand side as 
\begin{align*}
    &F_1(u_k^{\tau,2}) - F_1(u_{k+1}^{\tau,1}) + F_2(u_k^{\tau,1}) - F_2(u_{k+1}^{\tau,2})\\
    & = F_1(u_k^{\tau,2})      - F_2(u_k^{\tau,2}) 
      + F_2(u_{k+1}^{\tau,1})  - F_1(u_{k+1}^{\tau,1})
      + F_2(u_k^{\tau,1})      - F_1(u_k^{\tau,1}) \\
    &+ F_1(u_{k+1}^{\tau,2}) - F_2(u_{k+1}^{\tau,2})
     + F_2(u_k^{\tau,2})        - F_2(u_{k+1}^{\tau,1}) 
     + F_1(u_k^{\tau,1})        - F_1(u_{k+1}^{\tau,2})\\
&\le 4\delta_F+ 2(\|F_1\|_{C^0} + \|F_2\|_{C^0}),
\end{align*}
which completes the proof.
\end{proof}
\begin{remark} Note that in general, assuming $F$ to be convex but not strictly convex, minimizers of \eqref{eq:JKO_general} are not unique. Thus additional term $4\tau(\|F_1\|_{C^0} + \|F_2\|_{C^0})$ on the right hand side is natural in this setting. However, because this term remains even if we assume $F$ to be strictly convex (implying uniqueness of minimizers) shows that our estimate is not optimal. A way to improve this estimate would be to work with geodesic interpolations, see e.g. \cite{Santambrogio2017_overiewGF}. We leave this question for future work.
\end{remark}


A natural questions is the behaviour of iterates as $\tau$ tends to zero. The starting point for the convergence analysis is to derive a priori estimates that, using compactness, guarantee the existence of a limiting curve. The key idea is to exploit the minimization property of the iterates, i.e.,
\begin{align}\label{eq:JKO_minimality_estimate}
    F(u_{k+1}^\tau) + \frac{1}{2\tau}d^2(u_k^\tau, u_{k+1}^\tau) \le F(u_k^\tau).
\end{align}

This implies the \emph{energy estimate} $\sup_k F(u_k^\tau) \le F(u_0)$.
Summing over $k$ further yields the \emph{total square estimate}
\begin{align}\label{eq:total_square}
    \sum_{k\in\N_0} d^2(u_k^\tau, u_{k+1}^\tau) \le 2\tau(F(u_0) - \inf_k F(u_k^\tau)).
\end{align}
Assuming $F$ to be bounded from below and $u_0$ such that $F(u_0) < +\infty$, this provides a uniform bound.
Applying the triangle inequality
for any $0 \leq l \leq k$, and using the Cauchy-Schwarz inequality we obtain
$$
d(u_l^\tau, u_k^\tau) \leq \sqrt{2 \tau} \sum_{j=l}^{k-1} \frac{1}{\sqrt{2 \tau}} d(u^\tau_{j}, u^\tau_{j+1}) \leq \sqrt{2 \tau} \sqrt{(k-l)} \sqrt{F(u^\tau_{0})-\inf _{n \in \mathbb{N}} F(u_k^\tau)}.
$$
As a consequence for any $0 \leq s \leq t$, denoting by $u^\tau(t)$ the constant speed geodesic interpolation introduced in \eqref{eq:geodesic_interpolation}, we have
\begin{equation}\label{eq:Hoelder_est}
\begin{aligned}
d(u^\tau(s), u^\tau(t)) \leq & d(u^\tau(s), u^\tau_{[\frac{s}{\tau}+1] \tau})+d(u^\tau_{[\frac{s}{\tau}+1] \tau}, u^\tau_{[\frac{t}{\tau}] \tau})+d(u^\tau_{[\frac{t}{\tau}] \tau}, u^\tau(t)) \\
\leq &([\frac{s}{\tau}+1]-\frac{s}{\tau}) d(u^\tau_{[\frac{s}{\tau}] \tau}, u^\tau_{[\frac{s}{\tau}+1] \tau}) \\
&+\sqrt{2 \tau([\frac{t}{\tau}]-[\frac{s}{\tau}+1])(F(u^\tau_{0})-\inf _{n \in \mathbb{N}} F(u_k^\tau))} \\
&+(\frac{t}{\tau}-[\frac{t}{\tau}]) d(u^\tau_{[\frac{t}{\tau}] \tau}, u^\tau_{[\frac{t}{\tau}+1] \tau}) \\
\leq & \sqrt{6(F(u_{0})-\inf _{n \in \mathbb{N}} F(u_{n}))}(t-s)^{\frac{1}{2}}
\end{aligned}
\end{equation}
Ultimately, this $\frac{1}{2}$-H\"older estimate allows the application of the Arzel\'a-Ascoli theorem to conclude relative compactness of the family $(u^\tau)_{\tau > 0}$, i.e. the existence of a limit curve. It then remains to show that this limit indeed satisfies the chosen gradient flow formulation, which depends on the problem at hand.
\begin{remark}
While the above estimate is only of order $\sqrt{\tau}$, it can be shown, \cite[Ch. 4]{AmbrosioGigliSavare2008}, that under mild regularity assumptions on the initial point, convergence with order $\tau$ holds. In order to obtain second order convergence, one has to modify the variational scheme. In the Euclidean case, replacing \eqref{eq:JKO} by 
$$
u_{k+1}^\tau = \argmin_{u} ( 2F\left(\frac{u+u_k^\tau}{2}\right) + \frac{1}{2\tau}\|u-u^\tau_k\|_2^2 ),
$$
is sufficient. Using the notion of geodesics, this idea carries over to the Wasserstein setting, \cite{Legende2017_2ndOrder}; see also \cite{Matthes_2019,Carrillo2022_PrimalDual} for BDF-type and Crank-Nicholson-type higher-order schemes.
\end{remark}
\section{Data driven minimizing movement scheme}\label{sec:daJKO}
We first introduce our approach in the most general setting in metric spaces, generalizing the Euclidean setting described in the introduction. To this end we denote by $(X,d_S)$ the metric space containing the state while the data lives in the space $(Y,d_D)$. We first introduce a measurement operator $\B: X \to Y$. In this work, we will always assume $\B$ to be linear and $d_D$-$d_S$-Lipschitz continuous with constant $L_\B$, i.e. for all $x_1,\, x_2 \in X$ there holds
\begin{align}\label{eq:B_Lipschitz}
d_D(\B(x_1),\B(x_2)) \le L_{\B} d_S(x_1, x_2).
\end{align}
In this set-up, we extend \eqref{eq:JKO_general} by introducing the data driven minimizing movement scheme as follows
\begin{align}\label{eq:JKO_metric_data}
u_{k+1}^{\tau,\delta} \in \argmin_{u} ( F(u) + \frac{1}{2\tau}\left(d_S^2(u,u^{\tau,\delta}_k)+ \frac{d_D^2(\mathcal{B}(u),v_{k+1}^\delta)}{\theta} \right),
\end{align}
where the $v_k \in Y$ are given measurements and $\theta \in (0,\infty)$ is a weighting parameter. 
For the particular choice $d_S = W_2$ we call \eqref{eq:JKO_metric_data} the data driven JKO (\daJKO) scheme.
In the limit $\theta \to 0^+$, we obtain $u_{k+1}$ s.t. $\mathcal{B}(u_{k+1}) = v_{k+1}$, i.e. the flow is completely data driven while for $\theta \to + \infty$, the data has no more influence on $u_{k+1}$. 
\begin{remark}[Minimizers without noise]
Let us briefly discuss the case of exact measurements. If the data is generated by solutions $u_k^\tau$ to \eqref{eq:JKO_general}, i.e. $v_k = \B(u_k^\tau)$, then solving \eqref{eq:JKO_metric_data} yield the same minimizers as \eqref{eq:JKO_general} (up to the possible non-uniqueness of minimizers). If, on the other hand, the data is generated from the time-continuous limit curve $u(t)$, evaluated at discrete times $t_k = t\tau$, we will always have an approximation error of order $\tau$.
\end{remark}
\begin{remark}[Connection to nudging]\label{rem:control_interpretation}
Returning to the Euclidean setting for a moment, we see that if we choose $\theta$ as a function of $\tau$, we obtain, formally in the limit $\tau \to 0$, an interpretation as a closed loop control problem / data assimilation problem called nudging introduced in \cite{Anthes1974_NudgingFirst}. We start with 
\begin{align}\label{eq:JKO_data_Rd}
u_{k+1}^\tau = \argmin_{u\in\R^d} ( F(u) + \frac{1}{2\tau}\|u-u^\tau_k\|_2^2  + \frac{1}{2\tau\theta}\|\B u-v_{k+1}\|_2^2) 
\end{align}
Calculating the Euler-Lagrange equation results in
\begin{align}\label{eq:EulerLagrange_Data}
u_{k+1}^\tau = u_k^\tau - \tau \nabla F(u^\tau_{k+1}) + \frac{1}{\theta}\B^*(\B u_{k+1}^\tau - v_{k+1}).
\end{align}
If we choose, for a  given constant $c>0$, $\theta = c / \tau$ and (formally) pass to the limit $\tau \to 0$, we recover the equation
\begin{equation*}
\begin{aligned}
    u'(t) &= -\nabla F(u(t)) + \frac{1}{c}\B^*(\B u(t) - v(t)), \quad t> 0,\\
    u(0) &= u_0,
    \end{aligned}
\end{equation*}
where $v(t)$ is a suitable interpolation of the data points in time (e.g. piecewise linear).
\end{remark}

We do not argue on the question of existence of minimizers (usually proven by means of the direct method of calculus of variations) which depends on specific choices of $(X,d_S)$ and $(Y,d_D)$.
Instead, we comment on convergence as $\tau \to 0$.
\begin{remark}[Convergence as $\tau \to 0$]\label{eq:JKO_data_convergence}
For the JKO scheme without data, the estimates in \eqref{eq:total_square} and \eqref{eq:Hoelder_est} provide the necessary compactness to obtain a limiting curve as $\tau \to 0$. They are shown by comparing the functional values of $u_{k+1}^\tau$ and $u_k^\tau$, using optimally of the former, see \eqref{eq:JKO_minimality_estimate}. In the data driven case, this inequality becomes 
\begin{equation}
\begin{aligned}
    F(u_{k+1}^{\tau,\delta}) + \frac{1}{2\tau}d^2(u_k^{\tau,\delta}, u_{k+1}^{\tau,\delta}) &\le F(u_k^{\tau,\delta}) + \frac{1}{2\theta\tau}\left(d^2(\B(u^{\tau,\delta}_k),v_{k+1}) - d^2(\B(u^{\tau,\delta}_{k+1}),v_{k+1})\right)\\
    \end{aligned}
\end{equation}
In order to proceed, we assume the error bound
\begin{align}\label{eq:bnd_data_error}
d_D(\B(u^\tau_{k+1}),v_{k+1}) \le \delta_k,
\end{align}
as well as $d_S$-$d_D$-Lipschitz-continuity of $\B$ with constant $L_\B$ satisfying $L_\B^2 < \theta / 2$. This allows us to further estimate, using the reverse triangle inequality $|d(x, z)-d(y, z)| \leq d(x, y)$, that
\begin{equation}
    \begin{aligned}
    &\quad d^2(\B(u^\tau_k),v_{k+1}) - d^2(\B(u^\tau_{k+1}),v_{k+1})\\
    &=\big(d(\B(u^\tau_k),v_{k+1}) - d(\B(u^\tau_{k+1}),v_{k+1})\big) \big(d(\B(u^\tau_k),v_{k+1}) + d(\B(u^\tau_{k+1}),v_{k+1}) \big)\\
    &\le d(\B(u^\tau_k),\B(u^\tau_{k+1}))\, (d(\B(u^\tau_k),v_{k+1}) + d(\B(u^\tau_{k+1}),v_{k+1})).
\end{aligned}
\end{equation}
Using the triangle inequality once more,
$$
d(\B(u^\tau_k),v_{k+1}) \le d(\B(u^\tau_{k+1}),v_{k+1}) + d(\B(u^\tau_k),\B(u^\tau_{k+1})),
$$
we obtains
\begin{align*}
     d^2(\B(u^\tau_k),v_{k+1}) - d^2(\B(u^\tau_{k+1}),v_{k+1})
    \le  2 d^2(\B(u^\tau_k),\B(u^\tau_{k+1}))  + d^2(\B(u^\tau_{k+1}),v_{k+1}).
\end{align*}
Employing the Lipschitz continuity of $\B$ and $L_\B^2 < \theta / 2$ as well as the bound on the measurement error \eqref{eq:bnd_data_error}, results in
\begin{align*}
    F(u_{k+1}^\tau) + \frac{1}{2\tau}\left(1- \frac{2L_\B^2}{\theta}\right)d^2(u_k^\tau, u_{k+1}^\tau) &\le F(u_k^\tau)  + \frac{1}{2\theta\tau}\delta_k^2.
\end{align*}
 Thus in order to have convergence, both $\delta_k$ as well as $\theta$ must depend on $\tau$ in a suitable way to ensure that the last term converges to zero in the final estimate. A similar situation occurs when studying \eqref{eq:JKO_general} with a perturbed functional, see \cite{braides2014local,Braides2016_MMSequenceFunctionals}. 
\end{remark}
\begin{remark}[Relation to Tikhonov regularization]
As mentioned in the introduction, \eqref{eq:JKO_data} can also be seen as a Tikhonov regularization with a non-linear forward operator. Thus, one might expect  stability and convergence results as in \cite[Chapter 10]{engl2000regularization}. However, in the most general metric setting, neither minimizers nor limiting curves are unique (not even locally). In the $2$-Wasserstein case, combined with additional convexity assumptions on $F$, similar results might be possible which is, however, beyond the scope of this work.
\end{remark}

\section{Numerical realization}\label{sec:numerics}
We now introduce our numerical scheme which is based on the recent work by \cite{Carrillo2022_PrimalDual}, yet using a finite difference approximation to discretize the divergence constraints instead of a piecewise constant approximation. This approach yields a clear interpretation of the initial and boundary conditions appearing in \eqref{eq:Benamou_Brenier}. We provide a detailed description of the numerical scheme to highlight the influence of weighted norms and inner products that arise from the finite difference discretization. Let us remark that higher order schemes have been developed in \cite{Carrillo2022_PrimalDual,Matthes_2019}, and that our data assimilation approach can be combined with these schemes in a similar fashion.

\subsection{Finite difference approximation}
To convey the main ideas, we consider the one-dimensional case, i.e., $\Omega=(L,R)$ with $L<R$ being real numbers. The case of multiple dimensions can be handled similarly if the domain is a Cartesian product of intervals by employing a usual tensor product construction for the discretization.
We partition $\Omega$ into $N_x$ intervals with mesh size $\delta_x=(R-L)/N_x$. Similarly, we partition the time interval $(0,1)$ into $N_t$ intervals with mesh size $\delta_t$, 
and introduce the grid points
\begin{align*}
    x_j=(j-1)\delta_x + L,\quad t_k = (k-1)\delta_k, \quad j\in\{1,\ldots, N_x+1\},\, k\in\{1,\ldots,N_t+1\}.
\end{align*}
To any continuous function $v:[L,R]\times[0,1] \to \mathbb{R}$, we associate a grid function $v_h=(v_{j,k})_{j,k}$ via
\begin{align*}
    v_{j,k}=v(x_j,t_k) \quad j\in\{1,\ldots, N_x+1\},\, k\in\{1,\ldots,N_t+1\}.
\end{align*}
Similar notation is used for grid functions associated with the spatial and temporal partitions, respectively.

The discrete energy approximating \eqref{eq:functional_F} is obtained by using a composite trapezoidal rule
\begin{align}\label{eq:functional_Fh}
    F_h(u_h) 
    = \sum_{j=1}^{N_x+1} w^x_j \Big( U(u_j) + V_j u_j + \frac{1}{2} \sum_{i=1}^{N_x+1} w^x_i W_{i,j} u_{i}u_{j}\Big),
\end{align}
where $V_j=V(x_j)$, $W_{i,j}=W(x_i,x_j)$, and $u_h=(u_j)$ is a grid function associated with the spatial partition, and the weight functions are given by
\begin{align*}
    w_j^x =\begin{cases} \frac{\delta_x}{2} &j\in\{1,N_x+1\},\\
                        \delta_x   &\text{otherwise}.
                        \end{cases}
\end{align*}
We denote by $w_k^t$ a similar weight function for the composite trapezoidal rule associated with the temporal partition, and note that $\sum_{k=1}^{N_t+1}w_k^t=1$.

Using these rules, we define the following inner product and norm for grid functions $v_h=(\rho_{j,k}^v,m_{j,k}^v)$, $w_h=(\rho_{j,k}^w,m_{j,k}^w)$
\begin{align}\label{eq:def_norm}
    \langle v_h,w_h\rangle = \sum_{k=1}^{N_k+1}\sum_{j=1}^{N_x+1} w_k^tw_j^x\left( \rho_{j,k}^v\rho_{j,k}^w + m_{j,k}^vm_{j,k}^w\right),\quad \|v_h\|=\sqrt{\langle v_h,v_h\rangle},
\end{align}
which makes the linear space of grid functions a Hilbert space.

Before we state the discrete optimization problem corresponding to \eqref{eq:daJKO_intro}, we discretize the constraints \eqref{eq:continuity}--\eqref{eq:continuity_initial}.
For the divergence constraint \eqref{eq:continuity} we use centred differencing in space and a backward differencing in time for the interior grid points $j\in\{2,\ldots N_x\},\ k\in\{2,\ldots,N_t+1\}$, i.e.,
\begin{align}\label{eq:FD_int}
    \frac{\rho_{j,k}-\rho_{j,k-1}}{\delta_t}+\frac{ m_{j+1,k}-m_{j-1,k}}{2\delta_x}=0.
\end{align}
For the boundary we use a one-sided finite difference approximation to approximate $m_x$, i.e., for $j\in\{1,N_x+1\}$ we use
\begin{align}
    \frac{\rho_{1,k}-\rho_{1,k-1}}{\delta_t}+\frac{ m_{2,k}-m_{1,k}}{\delta_x}&=0,\quad\text{and}\label{eq:FD_bdry_left}\\
    \frac{\rho_{N_x+1,k}-\rho_{N_x+1,k-1}}{\delta_t}+\frac{ m_{N_x+1,k}-m_{N_x,k}}{\delta_x}&=0.\label{eq:FD_bdry_right}
\end{align}
The boundary condition \eqref{eq:continuity_bc} becomes
\begin{align}\label{eq:FD_bdry}
    m_{1,k}=0=m_{N_x+1,k}\quad\text{for all } k\in\{1,\ldots N_t+1\}.
\end{align}
Since the discretization does not depend on $m_{j,1}$, we set $m_{j,1}=1$ for $j=1,\ldots N_x+1$.
Furthermore, the initial condition \eqref{eq:continuity_initial} and the mass constraint are discretized as follows
\begin{align}
    \rho_{j,1}&=\rho_0(x_j), \quad j\in\{1,\ldots N_x+1\},\label{eq:FD_initial}\\
    \sum_{j=1}^{N_x+1} w_j^x (\rho_{j,k}-\rho_0(x_j))&=0,\quad k\in\{2,\ldots N_t+1\}. \label{eq:FD_mass}
\end{align}

\subsection{Discrete daJKO scheme}
Using the dynamic formulation of the Wasserstein distance \eqref{eq:Benamou_Brenier} and the notation $\rho_h^1=\rho_h(\cdot,1)$ for a grid function associated to the spatial partition, the discrete daJKO scheme, i.e., the minimization problem \eqref{eq:daJKO_intro} - but without data term - is then
\begin{align*}
    \inf_{(\rho_h,m_h)} \sum_{k=1}^{N_t+1}\sum_{j=1}^{N_x+1} w^x_j w_k^t \left(\Phi(\rho_{j,k},m_{j,k})+ {\tau}\big( U(\rho^1_j) + V_j \rho^1_j + \frac{1}{2} \sum_{i=1}^{N_x+1} w_i^x W_{i,j} \rho^1_{i}\rho^1_{j}\big)\right).
\end{align*}
Here, $(\rho_h,m_h)$ are grid function associated to the space time grid such that they satisfy the constraints \eqref{eq:FD_int}--\eqref{eq:FD_mass} in the following relaxed form, that is
\begin{align}
     \sum_{k=2}^{N_t+1}w_k^t \Bigg( w_1^x \left(\frac{\rho_{1,k}-\rho_{1,k-1}}{\delta_t}+\frac{ m_{2,k}-m_{1,k}}{\delta_x}\right)^2 \notag\\
     + w_{N_x+1}^x \left(\frac{\rho_{N_x+1,k}-\rho_{N_x+1,k-1}}{\delta_t}+\frac{ m_{N_x+1,k}-m_{N_x,k}}{\delta_x}\right)^2 \notag \\
     + \sum_{j=2}^{N_x}w_j^x \left(\frac{\rho_{j,k}-\rho_{j,k-1}}{\delta_t}+\frac{ m_{j+1,k}-m_{j-1,k}}{2\delta_x}\right)^2\Bigg)&\leq \delta_1^2, \label{eq:div_const_LS}\\
     \sum_{k=1}^{N_t+1}w_k^t \big(m_{0,k}^2+m_{N_x+1,k}^2\big) &\leq \delta_2^2 \label{eq:bdry_const_LS}\\
     \sum_{k=1}^{N_t+1} w_k^t \left(\sum_{j=1}^{N_x+1} w_j^x (\rho_{j,k}-\rho_0(x_j))\right)^2&\leq \delta_3^2 \label{eq:mass_const_LS}\\
     \sum_{j=1}^{N_x+1}w_j^x (\rho_{j,1}-\rho_0(x_j))^2 &\leq \delta_4^2\label{eq:initial_const_LS}
\end{align}
for some tolerances $\delta_i$, $i=1,2,3,4$.
Similar to \cite{Carrillo2022_PrimalDual}, we observe that the weakened constraints \eqref{eq:div_const_LS}--\eqref{eq:initial_const_LS} are quadratic and can be written in the form
\begin{align}\label{eq:def_C}
   Au \in \mathcal{C}_\delta=\{x \colon \| x_i -b_i\|_2\leq \delta_i, i=1,2,3,4\}, 
\end{align}
where the vector $u$ contains the coefficient of the grid functions $(\rho_h,m_h)$. Note that the weights $w_j^x$ and $w_k^t$ are included in the definition of $A_i$ and $b_i$, respectively, and the vectors $x_i$ are slices of the vector $x$ corresponding to the number of rows in $A_i$.
We define the matrix $A$ by vertically concatenating the matrices $A_i$, $i=1,\ldots, 4$. We note that $A$ is the matrix of a linear map from the Hilbert space of grid functions with inner product defined in \eqref{eq:def_norm} to a Euclidean space.
In order to enforce the constraints, we introduce the indicator function of the set $\mathcal{C}_\delta$ as
\begin{align*}
    \mathfrak{i}_{\delta}(\phi) =\begin{cases} 0 & \text{if } \phi \in \mathcal{C}_\delta,\\
                                        \infty & \text{otherwise}.
                                        \end{cases}
\end{align*}


Summarizing, given $u_h^{(n)}$ as an approximation of $u_n^\tau$ and suitable data $v^{(n+1)}$, one step of the discrete daJKO scheme is to compute $u_h^{(n+1)}=\rho_h^*(\cdot,1)$ where, for $\rho_h^0=u_h^{(n)}$, the grid function
$(\rho_h^*,m_h^*)$ is the minimizer of 
\begin{align*}
    \inf_{(\rho_h,m_h)} \left(\sum_{k=1}^{N_t+1}\sum_{j=1}^{N_x+1} w^x_j w_k^t \Phi(\rho_{j,k},m_{j,k})\right)+\tau E^{(n+1)}_h(\rho_h^1) + \mathfrak{i}_\delta(A u),
\end{align*}
where the combined energy is defined by
\begin{align*}
E_h^{(n+1)}(\rho_h^1)=F_h(\rho_h^1) + \frac{1}{2\theta} \|B_h(\rho_h^1)-v^{(n+1)}\|_d^2. 
\end{align*}
Here, $B_h$ denotes a suitable discretization of the measurement operator $\mathcal{B}$ and $\|\cdot\|_d$ a suitable $\ell_2$-like norm. One may choose different metrics for the data term, but we do not go in this direction here. Similarly, the initialization $\rho^0_h(x_j)=u^{(n)}_h(x_j)$ can be modified, e.g., to incorporate different data, and we employ such a modification in one of the examples in Section~\ref{sec:perturbed_m} below.

\subsection{Implementation of the daJKO scheme}
In order to perform one step of the daJKO algorithm computationally, we employ the algorithm developed in \cite{Yan2018} for the minimization of a sum of three convex functionals, see Algorithm~\ref{alg:JKOStep}. Iterating the daJKO steps yields our overall data assimilation scheme, as described in Algorithm~\ref{alg:JKO_scheme}. We note that, if no data terms are present, Algorithm~\ref{alg:JKO_scheme} reduces to the algorithm used in \cite{Carrillo2022_PrimalDual}. Here, we use, however, a different differencing scheme to discretize the constraints.
Next, let us discuss the building blocks used in Algorithm~\ref{alg:JKOStep}.
\IncMargin{1em}
\begin{algorithm}
\SetKwInOut{Input}{Input}\SetKwInOut{Output}{Output}
\Input{$u^{(0)},\, \phi^{(0)},\, it_{\max},\, E,\, \nabla E,\, \lambda,\, \sigma,\, A,\, b,\, \delta$}
\Output{$u^*$, $\phi^*$}
\BlankLine
\emph{Initialize $\bar u^{(0)}=u^{(0)}$ and $l=0$}\;
\For{$i = 0$ \KwTo $it_{\max}$}{
   \emph{$\phi^{(i+1)}={\rm prox}_{\sigma \mathfrak{i}^*_\delta}(\phi^{(i)} +\sigma A \bar u^{(i)})$}\;
  \emph{$u^{(i+1)} = {\rm prox}_{\lambda\Phi}( u^{(i)}-\lambda\nabla E(u^{(i)})-\lambda A^* \phi^{(i+1)})$}\;
  \emph{$\bar u^{(i+1)}= 2 u^{(i+1)}-u^{(i)}+\lambda \nabla E(u^{(i)}) -\lambda\nabla E(u^{(i+1)})$}\;
  \If{convergence}{
  \emph{$u^*=u^{(i+1)}$}\;
  \emph{$\phi^*=\phi^{(i+1)}$}\;
  break}{}
}
\caption{Primal dual algorithm for one daJKO step, cf. \cite{Yan2018}.\label{alg:JKOStep}}
\end{algorithm}\DecMargin{1em}

The adjoint $A^*$ of $A$ is defined via the relation
\begin{align*}
    \langle \psi, A u\rangle_2 = \langle A^*\psi, u\rangle,
\end{align*}
where the weighted inner product \eqref{eq:def_norm} is used in the right-hand side.
Denoting $W$ the Gramian of that weighted inner product, we obtain for the matrix representation of the adjoint the identity $A^*\psi =W^{-1}A^T\psi$, where $A^T$ is the transpose matrix of the matrix $A$.
\IncMargin{1em}
\begin{algorithm}
\SetKwFunction{DAJKOStep}{DAJKOStep}
\SetKwInOut{Input}{Input}\SetKwInOut{Output}{Output}
\Input{$\rho_0$}
\Output{$(\rho^{(k)})_{k=1}^{N_{JKO}}$}
\BlankLine
\emph{Initialize $\rho^{(1)}=\rho_0, \, \phi^{(1)}=0,\, u^{(1)}=0,\,  u^{(1)}_{1:N_x+1,1}=\rho_0$}\;
\For{$n = 1$ \KwTo $N_{JKO}-1$}{
   \emph{$u^{(n+1)},\phi^{(n+1)}=\DAJKOStep(u^{(n)},\phi^{(n)},it_{\max},\lambda,\sigma,A,b,\delta)$}\;
   \emph{$\rho^{(n+1)}= u^{(n+1)}_{1:N_x+1,N_t+1}$}\;
   \emph{Update $b$}\;
}
\caption{Primal dual algorithm for daJKO scheme, where DAJKOStep refers to Algorithm~\ref{alg:JKOStep}. Line 5 ensures that the initial condition for the next iteration is set correctly.\label{alg:JKO_scheme}}
\end{algorithm}\DecMargin{1em}

For a convex, lower semi-continuous and proper functional $\Psi$ defined on a Hilbert space, the proximal operator is defined by
\begin{align*}
    {\rm prox}_{\Psi}(u) = \argmin_{v} \Psi(v) + \frac{1}{2}\|v-u\|^2.
\end{align*}
Note that the norm is the one from the corresponding Hilbert space.
Next, we show that the proximal operators used in Algorithm~\ref{alg:JKOStep} can be evaluated efficiently. By definition of the proximity operator and the weighted norm in \eqref{eq:def_norm}, we need to consider the following minimization problem for a given $u_h=(\rho_{j,k},m_{j,})$:
\begin{align*}
    &\inf_{v_h=(\rho^v_{j,k},m^v_{j,k})} \left(\sum_{k=1}^{N_t+1}\sum_{j=1}^{N_x+1} w^t_k w_j^x \Phi(\rho_{j,k}^v,m^v_{j,k})\right) + \frac{1}{2\lambda} \|v_h-u_h\|^2 \\
    &= \inf_{(\rho^v_{j,k},m^v_{j,k})}  \sum_{k=1}^{N_t+1}\sum_{j=1}^{N_x+1}w_k^t w_j^x \left( \Phi(\rho_{j,k}^v,m^v_{j,k}) + \frac{1}{2\lambda}\big( |m^v_{j,k}-m_{j,k}|^2+(\rho^v_{j,k}-\rho_{j,k})^2\big)\right).
\end{align*}
Therefore, the minimizer is given per grid point by, cf. \cite[Proposition~1]{Papadakis2014},
\begin{align}\label{eq:prox_action}
    {\rm prox}_{\lambda\Phi}(\rho_{j,k},m_{j,k})=\begin{cases} (\rho^*_{j,k},m^*_{j,k}) &\text{if }\rho_{j,k}^*>0,\\ (0,0) &\text{otherwise,}\end{cases}
\end{align}
with $m^*_{j,k}=\rho_{j,k}^* m_{j,k}/(\rho_{j,k}^*+\lambda)$, and $\rho_{j,k}^*$ the largest positive real root of the cubic polynomial
\begin{align}\label{eq:polynom}
    P_{j,k}(x)=(x-\rho_{j,k})(x+\lambda)^2-\frac{\lambda}{2}|m_{j,k}|^2,
\end{align}
which can be computed using Cardano's formula; we refer to the appendix for details on the calculation of the largest real root. Therefore, every iterate $u^{(i)}=(\rho^{(i)}_{j,k},m^{(i)}_{j,k})$ in Algorithm~\ref{alg:JKOStep} satisfies $\rho^{(i)}_{j,k}\geq 0$.
The computation of ${\rm prox}_{\lambda\Phi}(\cdot)$ per grid point allows for an efficient implementation and a straight-forward parallelization.

The functional $\mathfrak{i}_\delta^*$ showing up in Algorithm~\ref{alg:JKOStep} is the Legendre-Fenchel transform of $\mathfrak{i}_\delta$, which is defined as
\begin{align*}
    \mathfrak{i}^*_\delta (\phi) = \max_{\psi} \langle \psi,\phi\rangle_2 - \mathfrak{i}_\delta(\psi),
\end{align*}
where the maximum is taken over all vectors $\psi$ with dimension corresponding to the number of rows of $A$, and the inner product $\langle\cdot,\cdot\rangle_2$ is the standard Euclidean inner product on that space.
Moreau's identity \cite{Moreau65} implies that 
\begin{align*}
    {\rm prox}_{\sigma \mathfrak{i}^*_\delta}(\phi)=\phi-\sigma\, {\rm prox}_{\mathfrak{i}_\delta}(\phi/\sigma).
\end{align*}
We note that
\begin{align*}
    {\rm prox}_{\mathfrak{i}_\delta}(\phi) = \argmin_\psi \mathfrak{i}_\delta(\psi) + \frac{1}{2}\|\psi-\phi\|_2^2 = {\rm proj}_{B_\delta}(\phi),
\end{align*}
where the projection onto $B_\delta$ is given by
\begin{align*}
    {\rm proj}_{B_\delta}(x)&=\begin{cases} x_i &\|x_i-b_i\|_2\leq \delta_i,\\
    \delta_i \frac{x_i-b_i}{\|x_i-b_i\|_2}+b_i &\text{otherwise},
    \end{cases}\,\, i\in\{1,2,3,4\}.
\end{align*}
 Summarizing, both proximal operators used in Algorithm~\ref{alg:JKOStep} can be  applied efficiently.

Next, let us compute the gradient of the discrete energy functional $F_h$, defined in \eqref{eq:functional_Fh}, which is defined via
\begin{align*}
    dF_h(u_h)[v_h] = \langle \nabla_uF_h(u_h),v_h\rangle,
\end{align*}
where $\langle\cdot,\cdot\rangle$ denotes the inner product of grid function induced by the composite trapezoidal rule defined in \eqref{eq:def_norm}.
For $v_h=(\rho^v_{j,k},m^v_{j,k})_{j,k}$, we have that
\begin{align*}
    dF_h(u_h)[v_h] &= \sum_{j=1}^{N_x+1} w_j^x \left(U'(\rho^1_j) \rho^v_{j,N_t+1}+V_j \rho^v_{j,N_t+1}+ \sum_{i=1}^{N_x+1} w_i^x W_{i,j}\rho^1_i \rho^v_{j,N_t+1}\right)\\
    &= \sum_{k=1}^{N_t+1}\sum_{j=1}^{N_x+1} w_j^x w_k^t \left( \Big(U'(\rho^1_j) +V_j + \sum_{i=1}^{N_x+1} w_i^x W_{i,j}\rho^1_i\Big)/w_{N_t+1}^t \right) \rho^v_{j,k}\delta_{N_t+1,k}
\end{align*}
Hence, the gradient is given by
\begin{align}\label{eq:grad_Fh}
    (\nabla_u F_h(u_h))_{j,k} &= \left(\frac{\delta_{k,N_t+1}}{w_k^t}\big(U'(\rho^1_j) + V_j + \sum_{i=1}^{N_x} w_i^x W_{i,j}\rho^1_i\big),0\right),
\end{align}
where the zero entry corresponds to the variation in $m_{j,k}$.
As a stopping criterion in line 6 of Algorithm~\ref{alg:JKOStep}, we require that 
\begin{align*}
    \max\left\{\frac{\|u^{(n+1)}_h-u^{(n)}_h\|}{\|u^{(n)}_h\|},\frac{\| \phi^{(n+1)}-\phi^{(n)}\|_2}{\|\phi^{(n)}\|_2},\frac{|E_h(u^{(n+1)})-E_h(u^{(n)})|}{E_h(u^{(n)})} \right\}<tol.
\end{align*}

\section{Examples}\label{sec:examples}
We will illustrate our method on the following two non-linear PDEs: The porous medium equation \cite{Vazquez2007_PME} and a variant of the Patlak-Keller-Segel model for the motion of bacteria under the influence of a chemical signal introduced in \cite{Blanchet2008_KellerSegelJKO}. 
%

\subsection{Porous medium equation}
The porous medium equation (PME) has a number of physical applications, exampling being the description of the flow of an isentropic gas through a porous medium or the study of groundwater infiltration. Given $m \ge 1$, it reads as 
\begin{align}\label{eq:PME}
    \partial_t u = \Delta u^m, \quad \text{ in } \Omega \times (0,T).
\end{align}
As shown in the seminal work of Otto \cite{Otto2001GeometryPME}, this is indeed a gradient flow with respect to the Wasserstein distance for the energy functional 
\begin{align}
    F(u) = \int_\Omega U(u)\;dx
\end{align}
where the  internal energy is defined as 
\begin{align}\label{eq:internal}
    U(u) = \begin{cases}
     \frac{1}{m-1}u^m, &  m > 1,\\
     u\log(u), &  m = 1.
    \end{cases}
\end{align}
We supplement \eqref{eq:PME} with the following initial and boundary conditions
\begin{align*}
    u(x,0) = u_0(x) \text{ in } \Omega, \quad \nabla u \cdot n  = 0\text{ on }\partial\Omega \times (0,T).
\end{align*}
For $m > 1$, the PME has finite speed of propagation so that for compactly supported initial data the solution remains compactly supported also for later times, \cite{Vazquez2007_PME}. The solutions show a self-similar behaviour and in spatial dimension one, an explicit form, the Barenblatt solution, is known. It is given as
\begin{align}\label{eq:Barenblatt}
    u_b(x, t)=\left(t+t_{0}\right)^{-\frac{1}{m+1}}\left(C-\frac{m-1}{2 m(m+1)} x^{2}\left(t+t_{0}\right)^{-\frac{2}{m+1}}\right)_{+}^{\frac{1}{m-1}}, 
\end{align}
for $C, t_{0}>0$.
In Figure~\ref{fig:PME_baren} we show some Barenblatt profiles with $m=2$, $t_0=10^{-3}$, and $C=(3/16)^{1/3}$, together with their numerical approximations generated by the daJKO scheme using no data terms. For the numerical algorithm we used $\tau=5\times 10^{-4}$, $N_t=10$, $N_x=100$, $\delta_i=10^{-5}=tol$, $\lambda=0.2$, $\sigma=4\times 10^{-4}$ such that $\lambda\sigma\|A A^*\|_2=0.9<1$, which is required for the convergence theory \cite{Yan2018}. We observe a good match between the exact solution and its numerical approximation. Furthermore, the energy is decaying monotonically.
Here, we do not aim for a full convergence study of the JKO scheme, but refer to \cite{Carrillo2022_PrimalDual} for results in this direction, which might be carried over to our discretization. Instead, let us discuss the use of data to correct the gradient flow in the presence of uncertainties.

\begin{figure}
    \centering
    \includegraphics[width=0.49\textwidth]{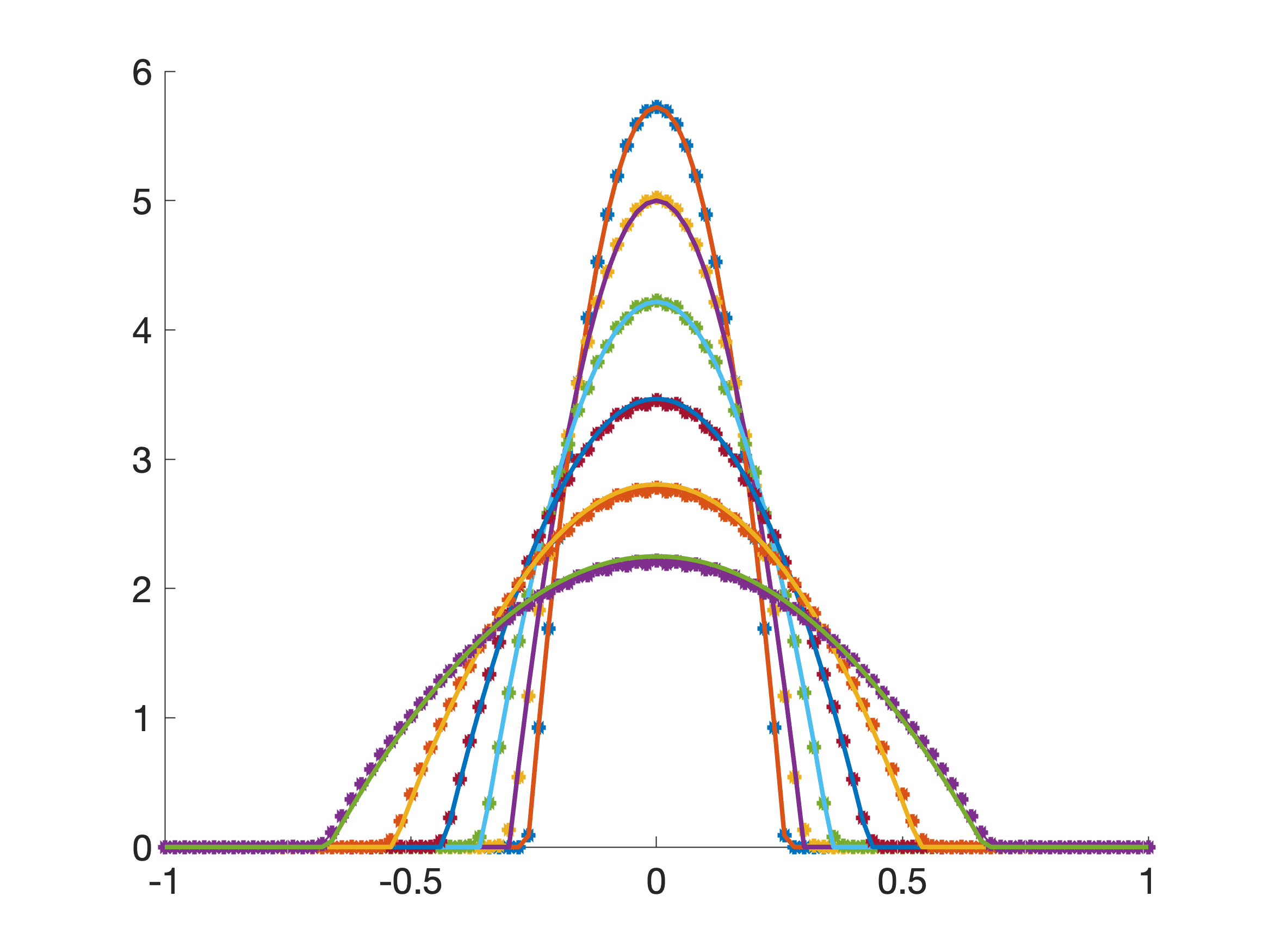}
    \includegraphics[width=0.49\textwidth]{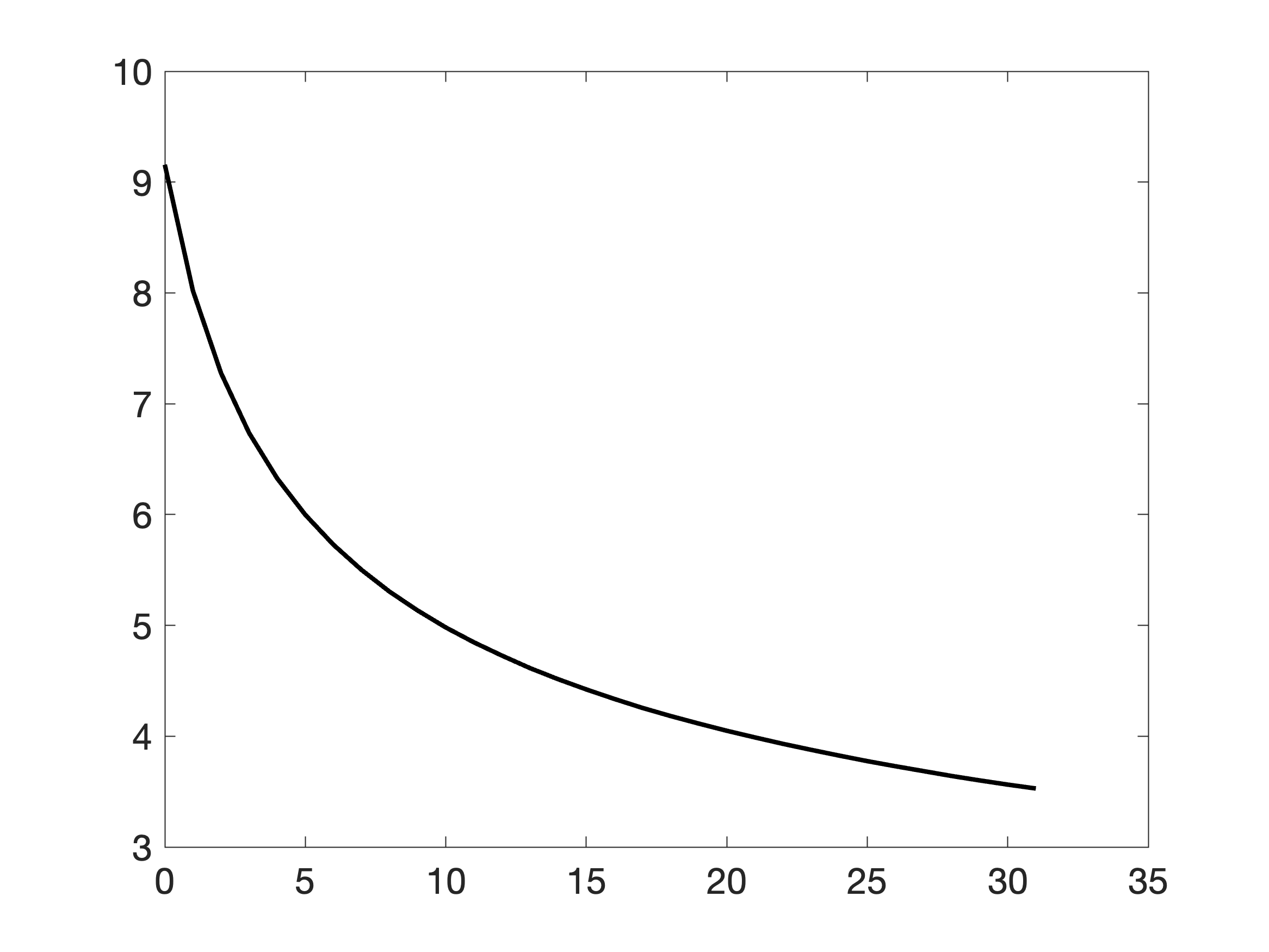}
    \caption{
    Left: Barenblatt profiles (solid) and their approximations (dotted) generated by the JKO scheme for different times $t=(k-1)\tau$ with $k=2^i$, $i\in\{0,\ldots,5\}$.
    Right: Energy decay during the JKO scheme for simulating a Barenblatt profile, i.e., without data terms. On average, Algorithm~\ref{alg:JKOStep} required $6000$ steps to converge.
    \label{fig:PME_baren}}
\end{figure}

\subsubsection{Unknown initial condition}
As a first test of our data assimilation framework, we assume that the true but unknown initial condition is given by \eqref{eq:Barenblatt} with $t=0$, $m=2$, $t_0=10^{-3}$, and $C=(3/16)^{1/3}$.
As a perturbed initial condition we employ a shifted Barenblatt solution
\begin{align}\label{eq:initial_shifted_space}
    u(x,0) = u_b(x-\bar x,\bar t), \text{ for some } \bar x, \bar t>0.
\end{align}
In order to correct for these perturbations, we assume a measurement operator $\B$, that employs the expected value and and the variance, respectively,
\begin{align*}
    \B^1(u) &= \int_\Omega x u(x)\dx,\\
    \B^2(u) &= \int_\Omega (x-\B_1(u))^2 u(x)\dx,
\end{align*}
which after employing the trapezoidal rule are discretized as follows
\begin{align}
    B^1_h(u_h) &= \sum_{j=1}^{N_x+1} w_j^x x_{j} \rho^1_j, \label{eq:observation_disc_expectation}\\
    B^2_h(u_h) &= \sum_{j=1}^{N_x+1} w_j^x (x_{j}-B_1(u_h))^2 \rho^1_j. \label{eq:observation_disc_variance}
\end{align}
Here, as above, we write $\rho^1_j=\rho_{j,N_t+1}$ for $u_h=(\rho_{j,k},m_{j,k})$.
The measurement operator is then given by $B_h(u_h)=(B_h^1(u_h),\vartheta B_h^2(u_h))$ with either $\vartheta=0$ or $\vartheta=1$, depending on whether or not variance data is used.
Similar to \eqref{eq:grad_Fh}, we compute the gradients of the measurement operator as follows
\begin{align*}
    (\nabla_u B_h^1(u_h))_{j,k} &= \frac{\delta_{k,N_t+1}}{w_k^t}\left( x_{j},0\right),\\
    (\nabla_u B_h^2(u_h))_{j,k} &= \frac{\delta_{k,N_t+1}}{w_k^t}\left( (x_{j}-B_h^1[u_h])^2 - 2 x_j\sum_{j'=0}^{N_x+1} w_{j'}^x(x_{j'}-B_h^1[u_h]) \rho^1_{j'},0\right).
\end{align*}
The penalty parameter is chosen $\theta=1/200$ in the following experiments.
%
%
\begin{figure}
    \centering
    \includegraphics[width=0.32\textwidth]{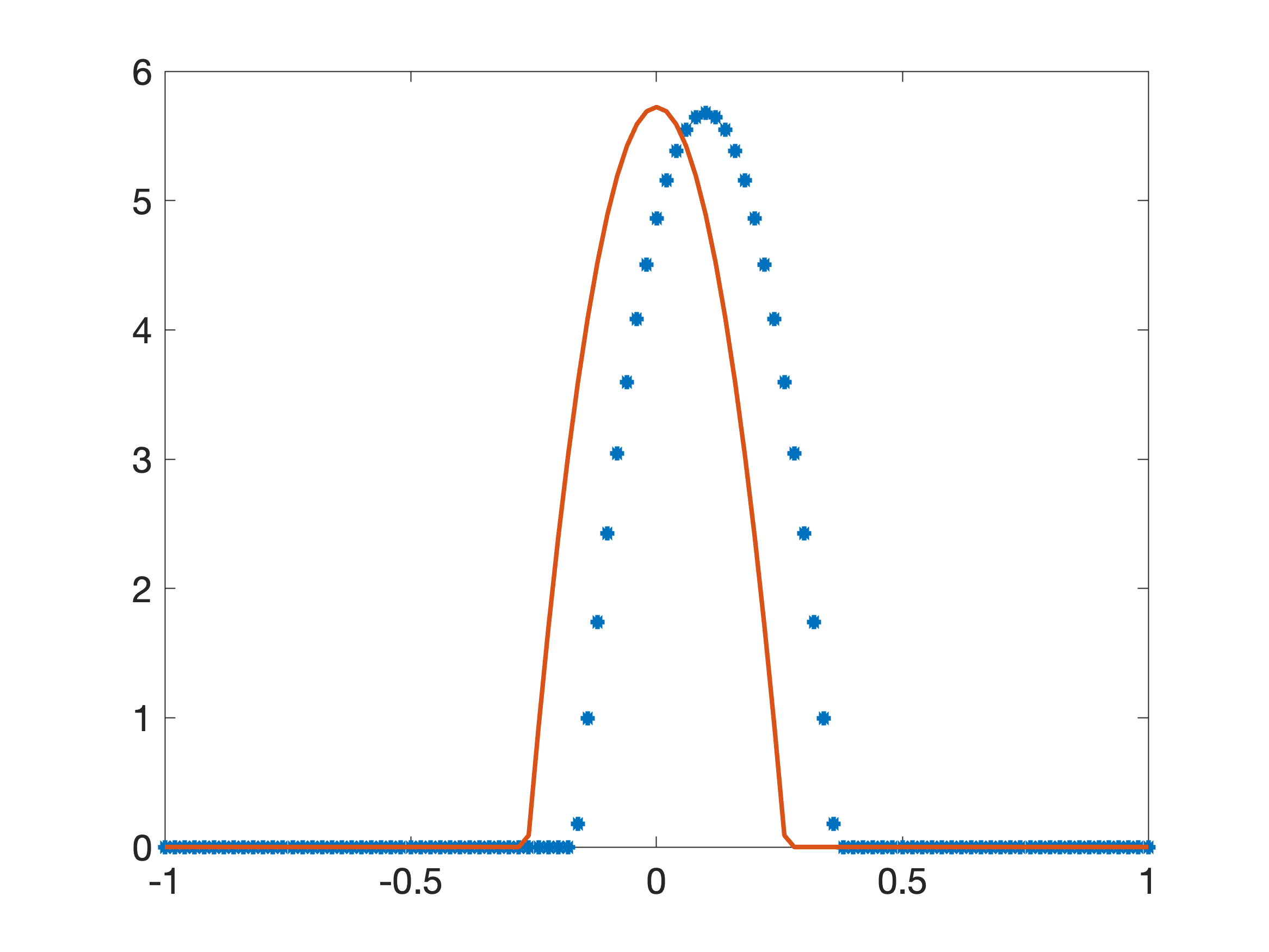}
    \includegraphics[width=0.32\textwidth]{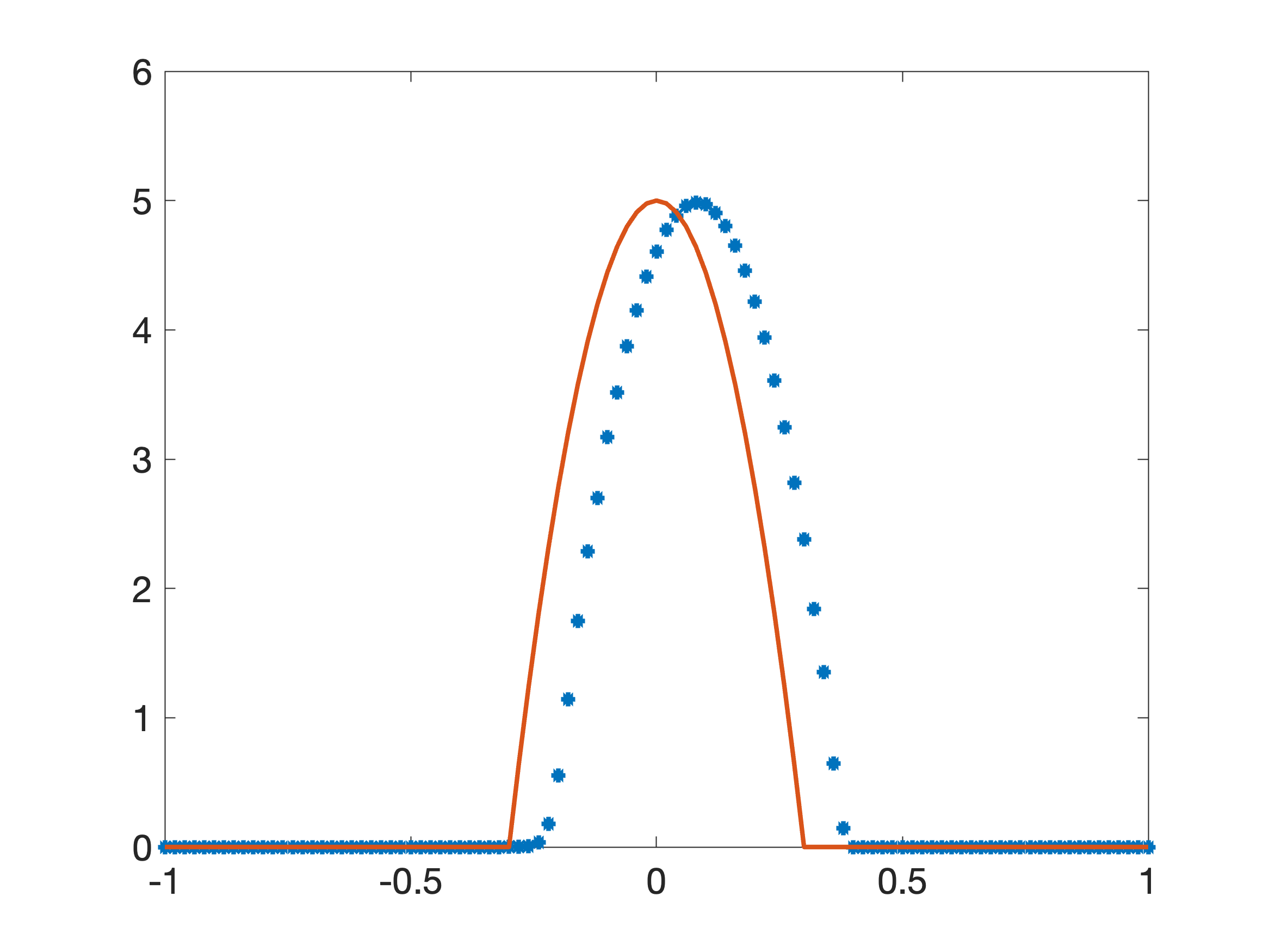}
    \includegraphics[width=0.32\textwidth]{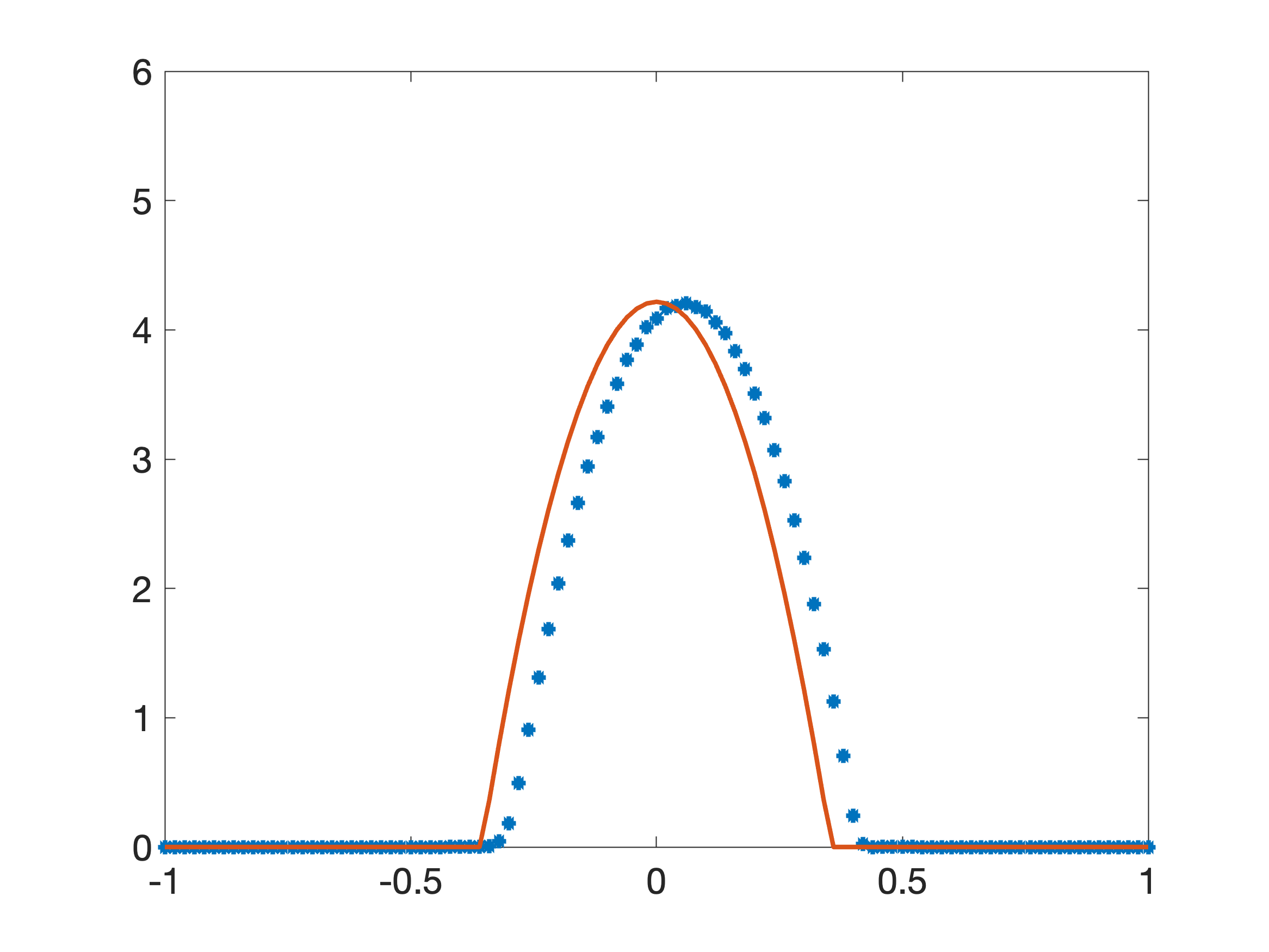}\\
    \includegraphics[width=0.32\textwidth]{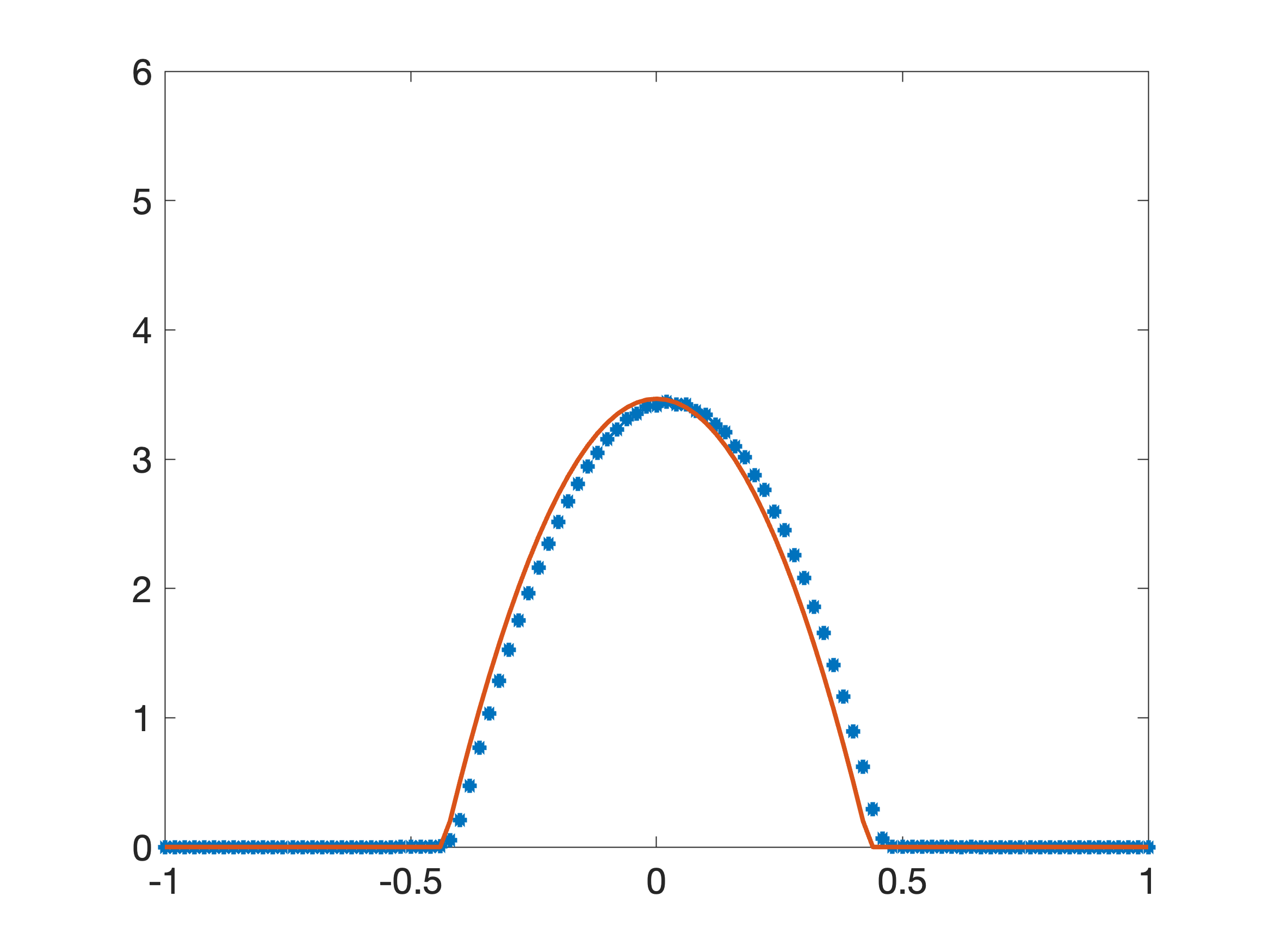}
    \includegraphics[width=0.32\textwidth]{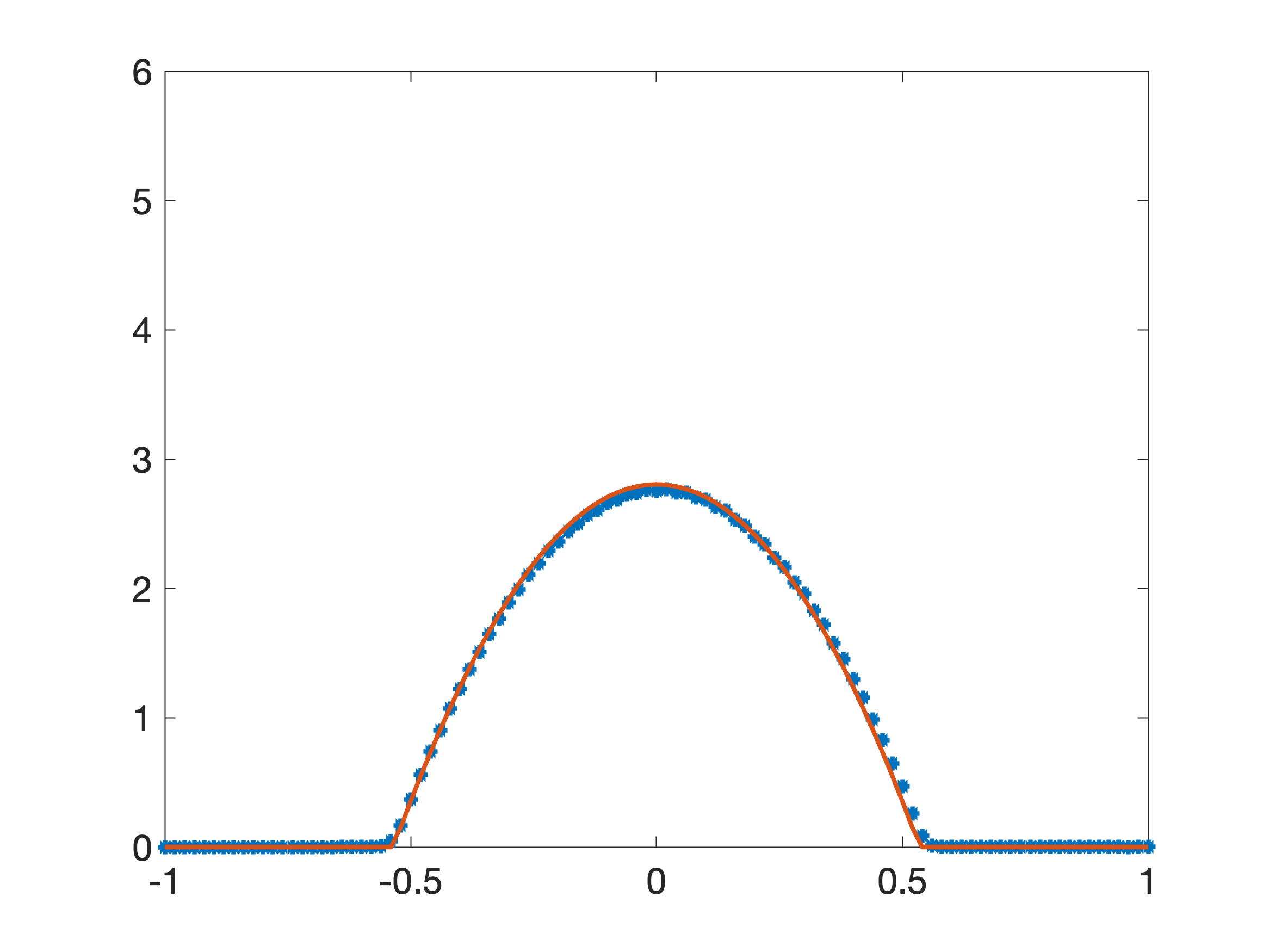}
    \includegraphics[width=0.32\textwidth]{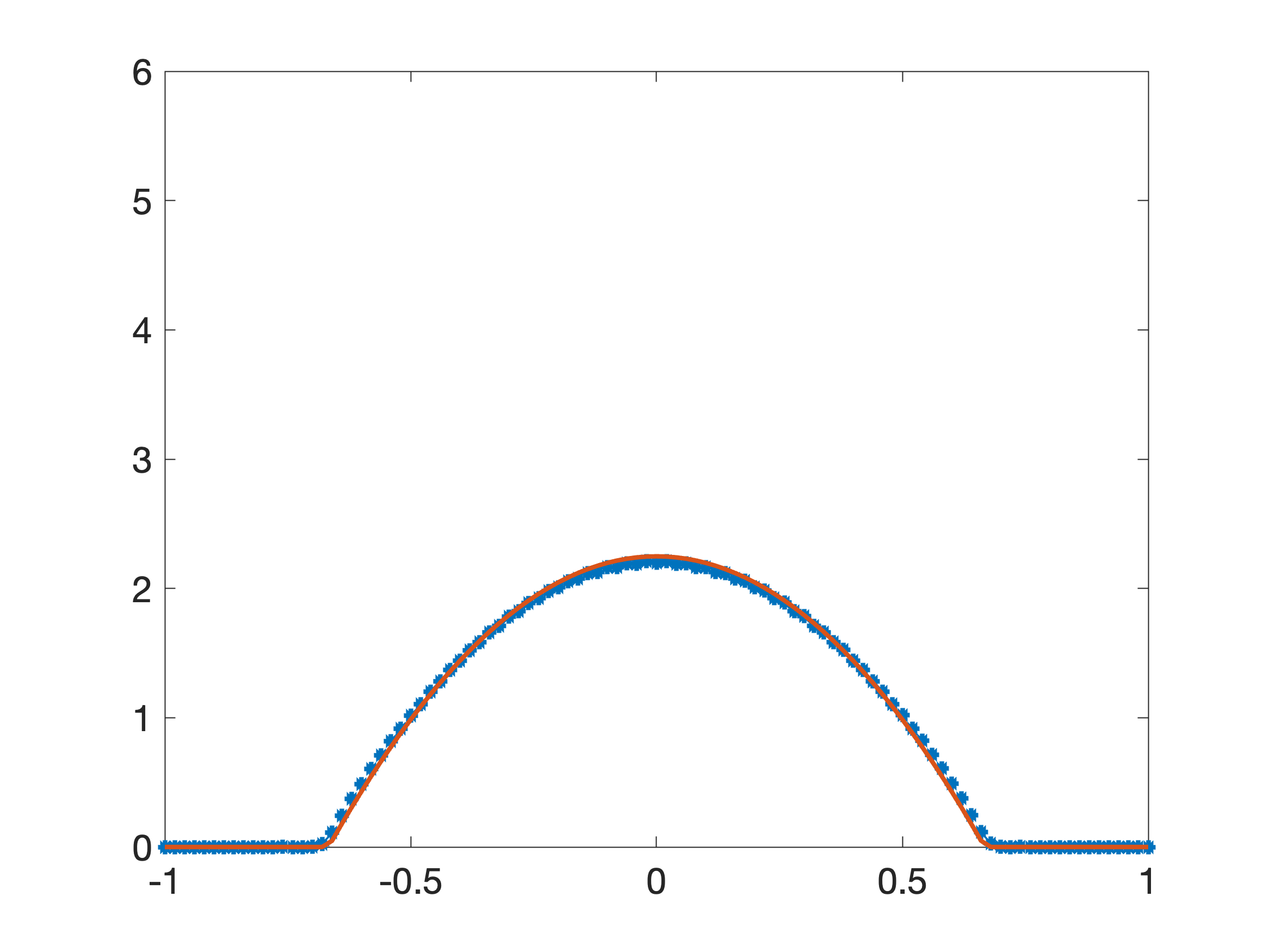}
    \caption{Barenblatt solution (solid red) and its approximation (dotted blue) using a shifted initial condition \eqref{eq:initial_shifted_space} with $\bar x=0.1$ and $\bar t=0$ and expectation $B_h^1(u_h)$ for the data term ($\vartheta=0$). The results are shown for $k=2^i$ for $i=0,\ldots,5$ from top left to bottom right. \label{fig:PME_data_no_noise_expected_value}}
\end{figure}

In our first experiment, we use a perturbed initial condition for the \daJKO scheme, given by a spatially shifted profile as defined in \eqref{eq:initial_shifted_space} with $\bar x= 0.1$ and $\bar t=0$. In order to correct for this shift, we employ the expected value of the ground truth, i.e., $\vartheta=0$.
Figure~\ref{fig:PME_data_no_noise_expected_value} shows that the \daJKO scheme is able to steer the numerical solution to the exact solution, thereby correcting for the perturbation in the initial condition. This might be expected, because the expected values effectively determines the maximum of the Barenblatt profile.

\begin{figure}
    \centering
    \includegraphics[width=0.32\textwidth]{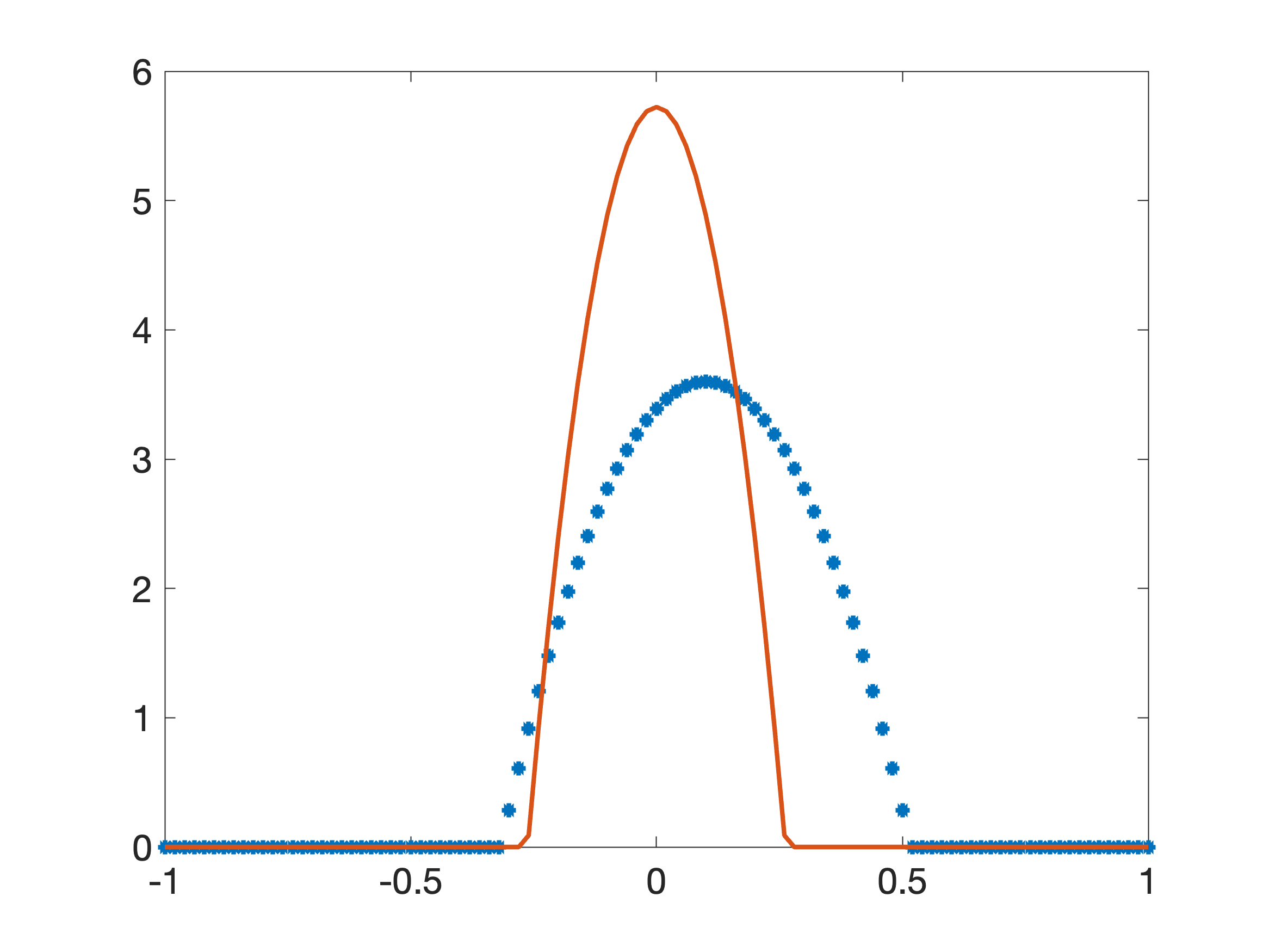}
    \includegraphics[width=0.32\textwidth]{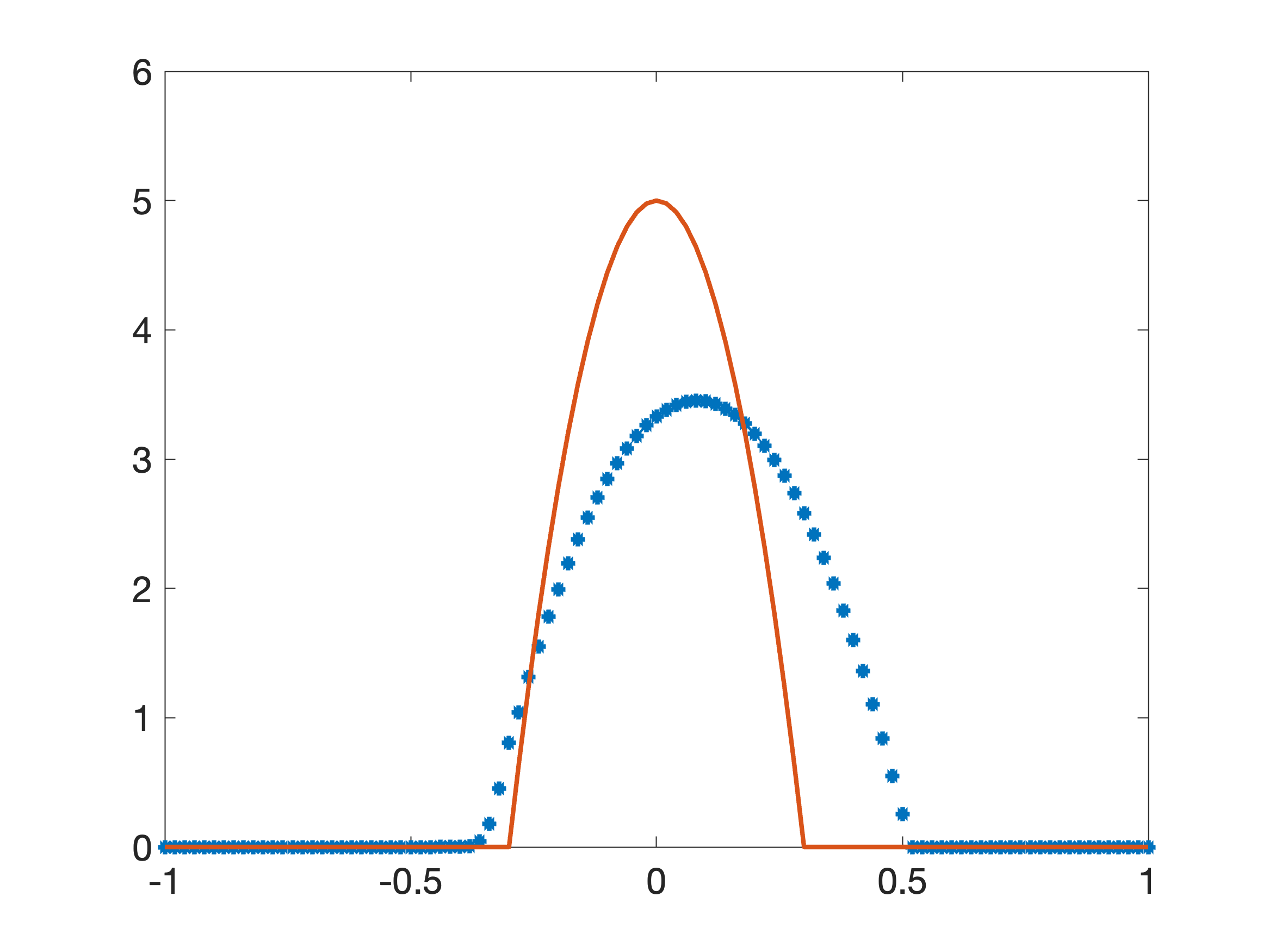}
    \includegraphics[width=0.32\textwidth]{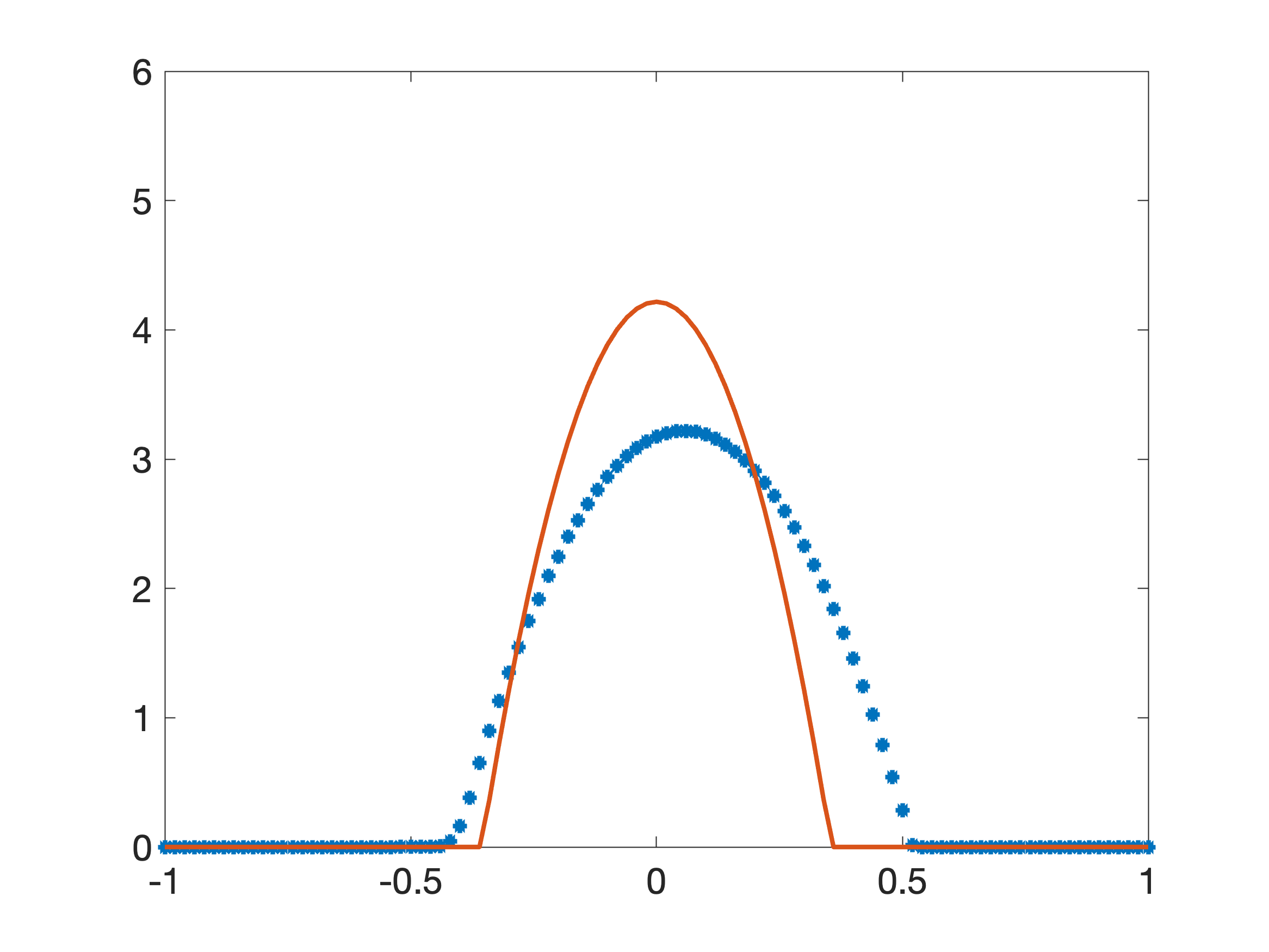}\\
    \includegraphics[width=0.32\textwidth]{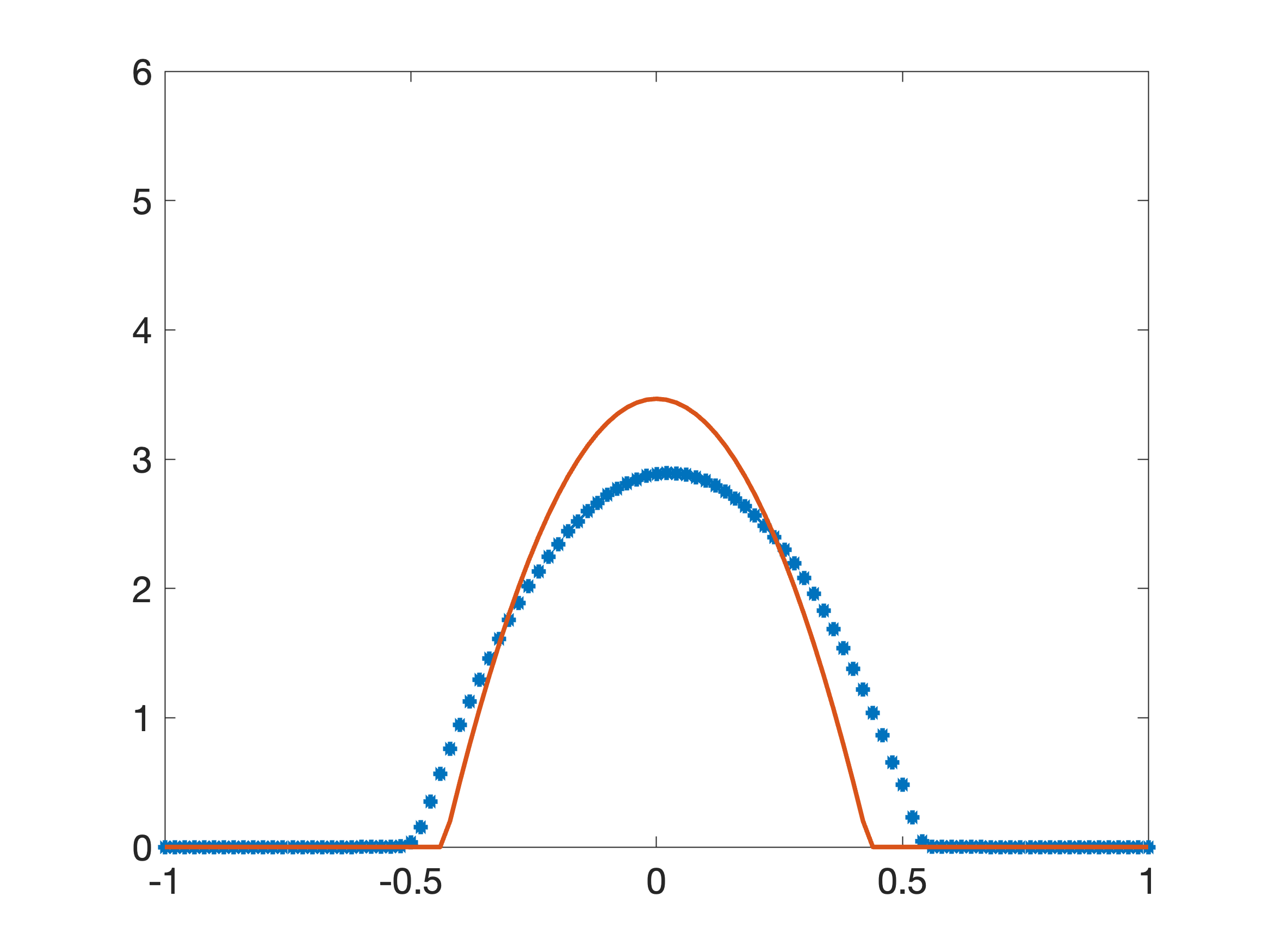}
    \includegraphics[width=0.32\textwidth]{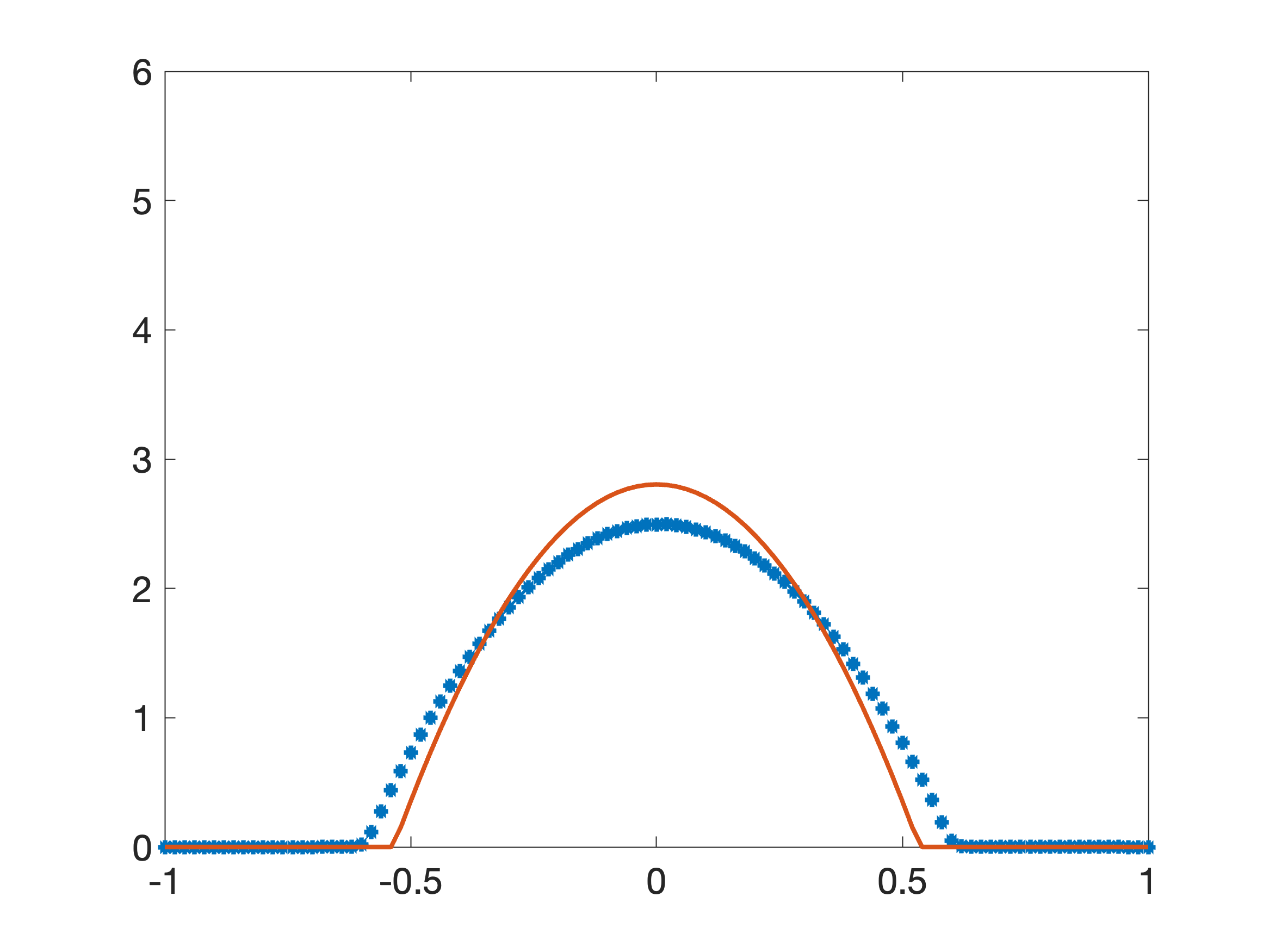}
    \includegraphics[width=0.32\textwidth]{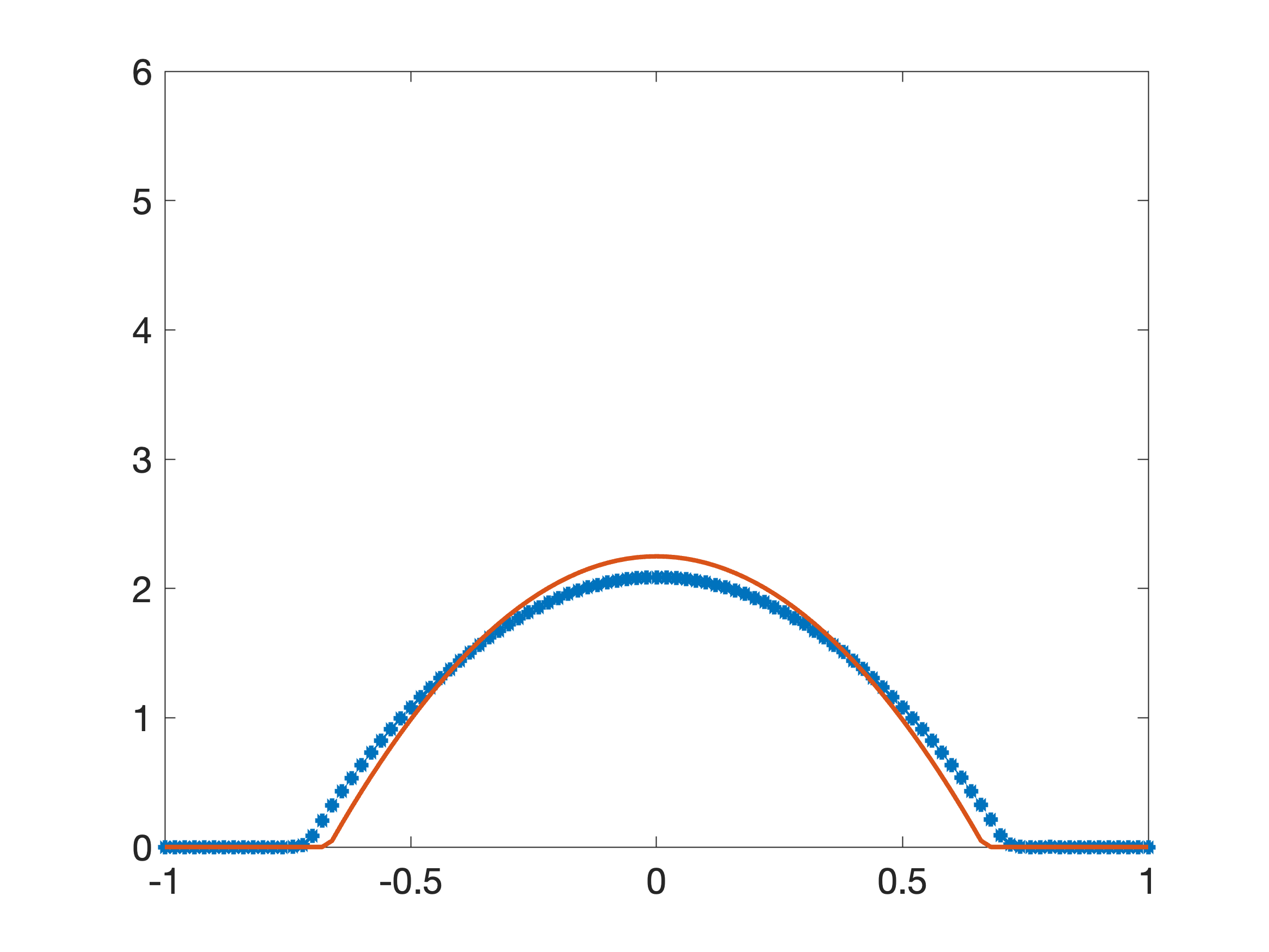}
    \caption{The same results as shown in Figure~\ref{fig:PME_data_no_noise_expected_value} but with a shifted initial condition \eqref{eq:initial_shifted_space} with $\bar x=0.1$ and $\bar t=6\tau$.
    \label{fig:PME_data_no_noise_expected_value_double_shift}}
\end{figure}

In the next experiment, we add to the spatial shift $\bar x=0.1$ also a temporal perturbation $\bar t=6\tau$ for the initial condition \eqref{eq:initial_shifted_space} of the \daJKO scheme.
In this situation, using only the expected value as data, we expect that the \daJKO scheme can again compensate the shift. This is in agreement with the results shown in Figure~\ref{fig:PME_data_no_noise_expected_value_double_shift}. It seems that the numerical approximation generated by the \daJKO scheme 'waits' for the true solution to arrive while aligning its peak with the one of the ground truth.
The matching is, however, not as good as in the previous experiment, in which no time shift has been applied.

\begin{figure}
    \centering
    \includegraphics[width=0.32\textwidth]{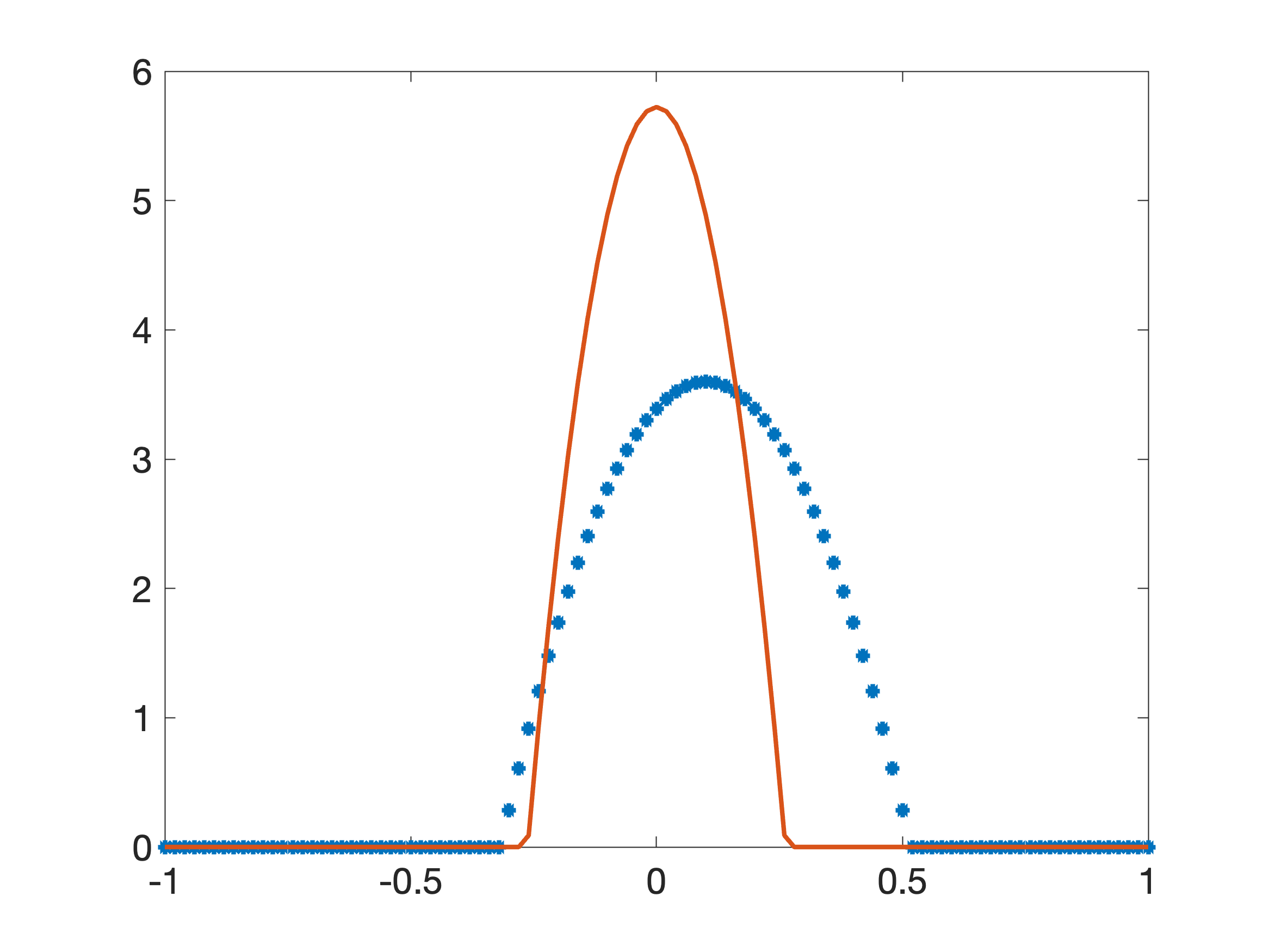}
    \includegraphics[width=0.32\textwidth]{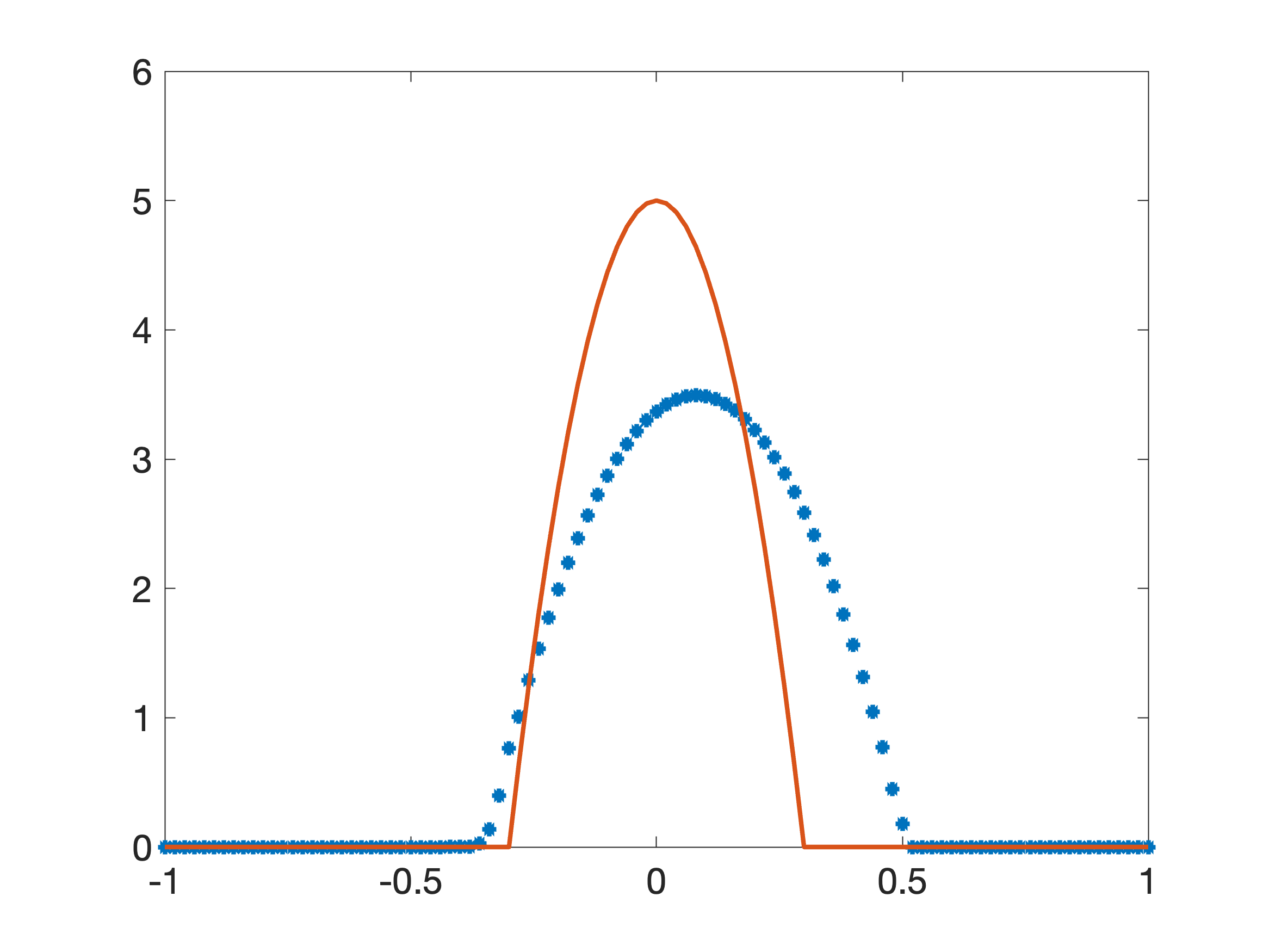}
    \includegraphics[width=0.32\textwidth]{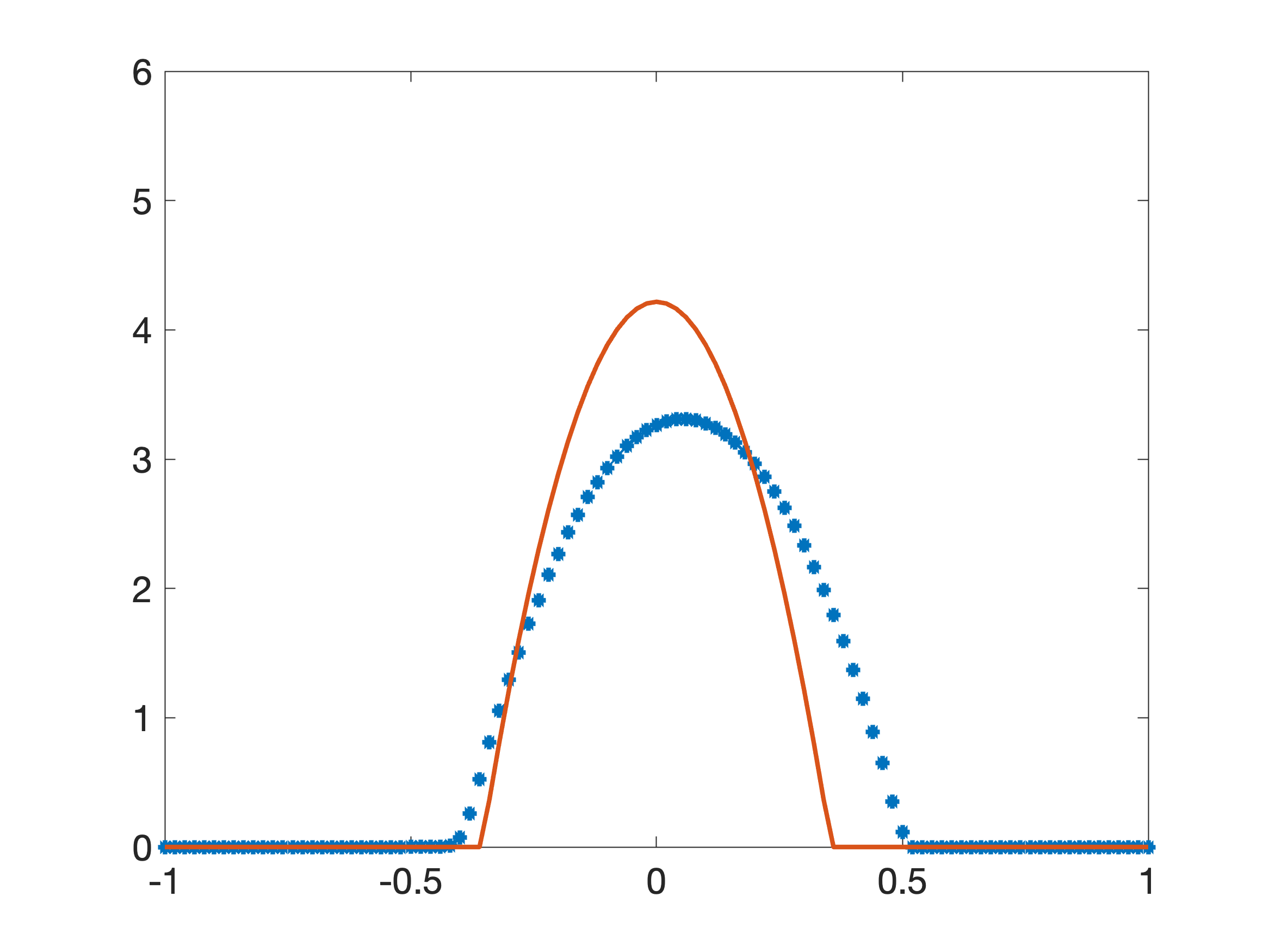}\\
    \includegraphics[width=0.32\textwidth]{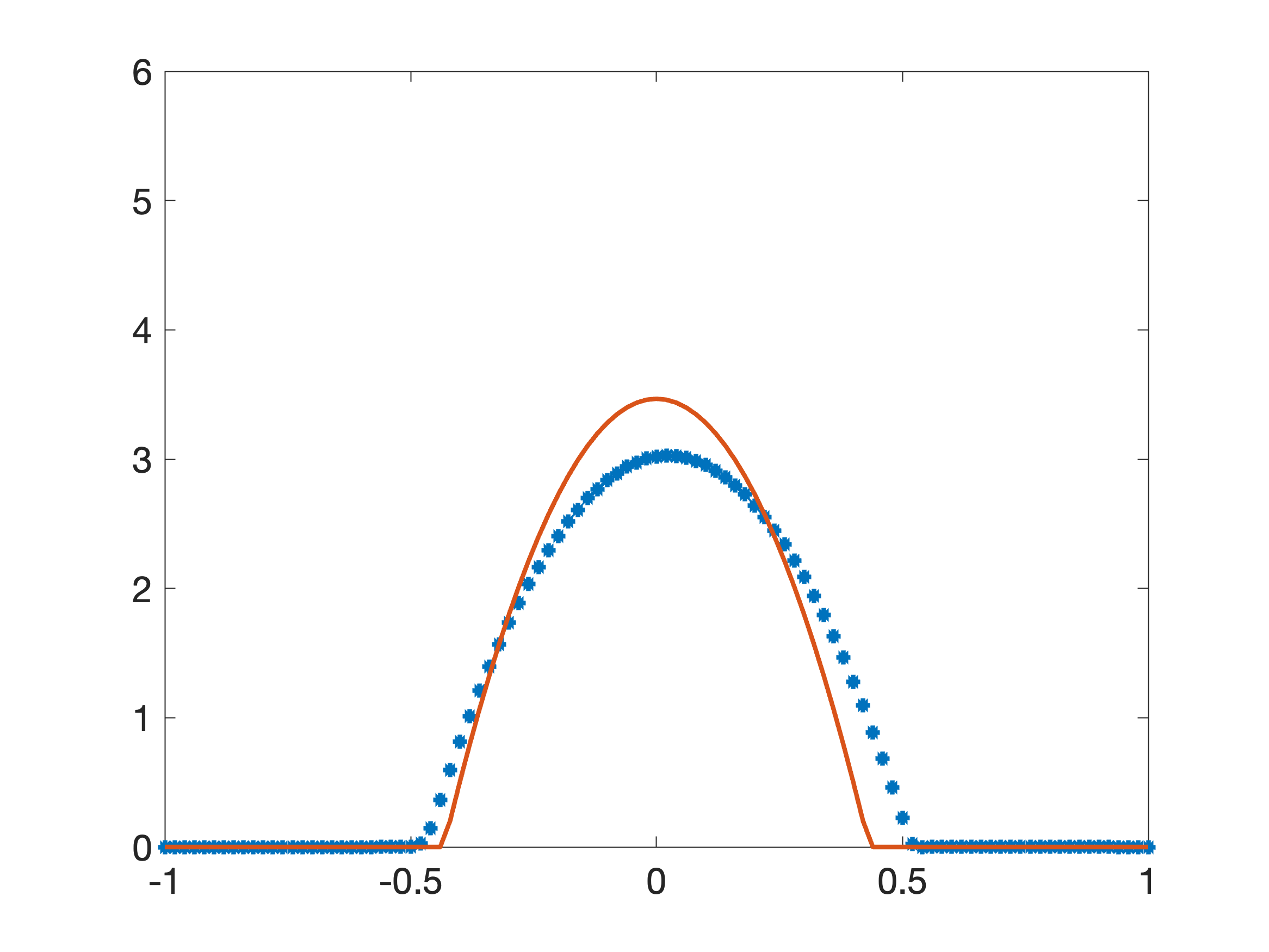}
    \includegraphics[width=0.32\textwidth]{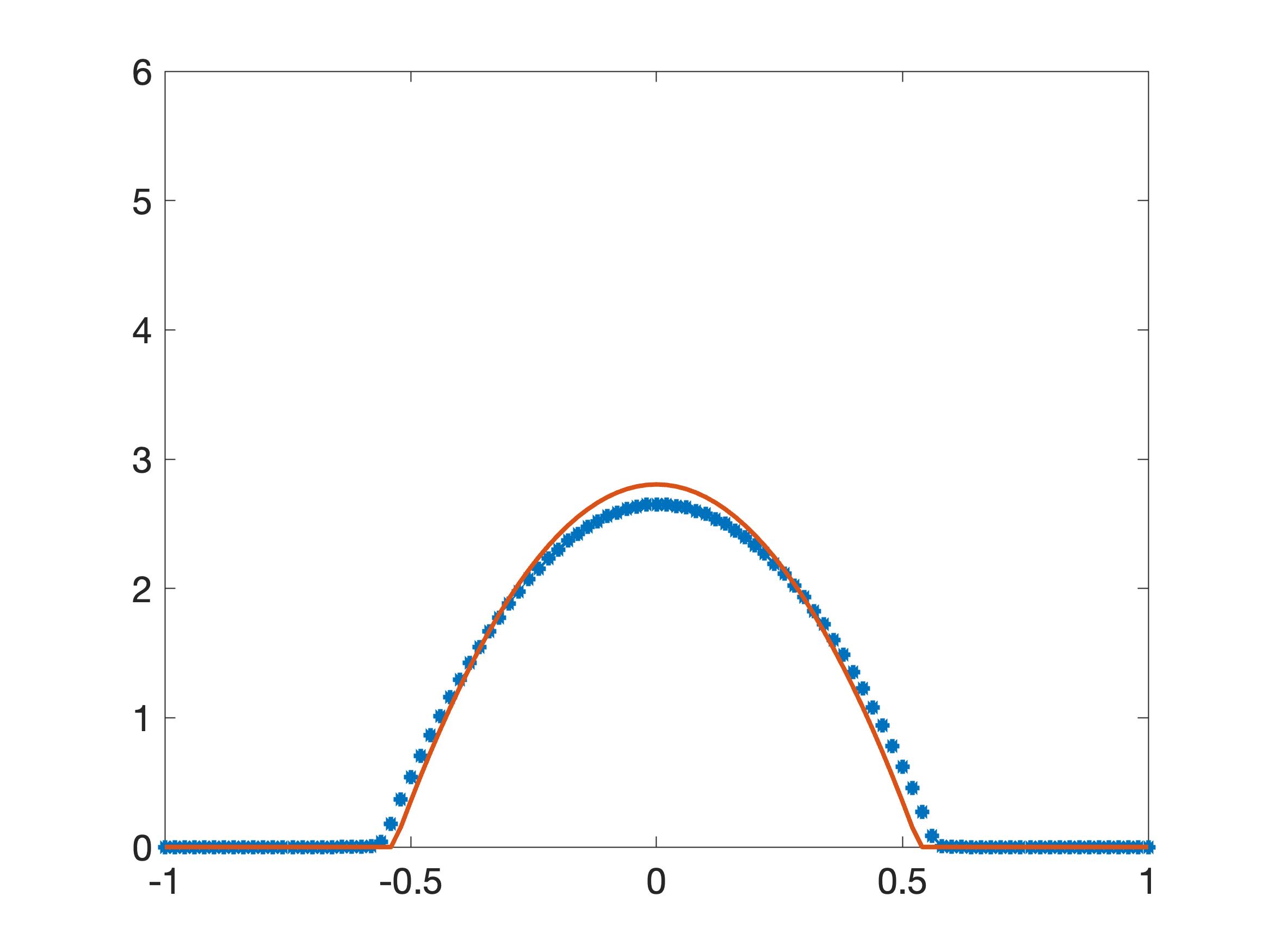}
    \includegraphics[width=0.32\textwidth]{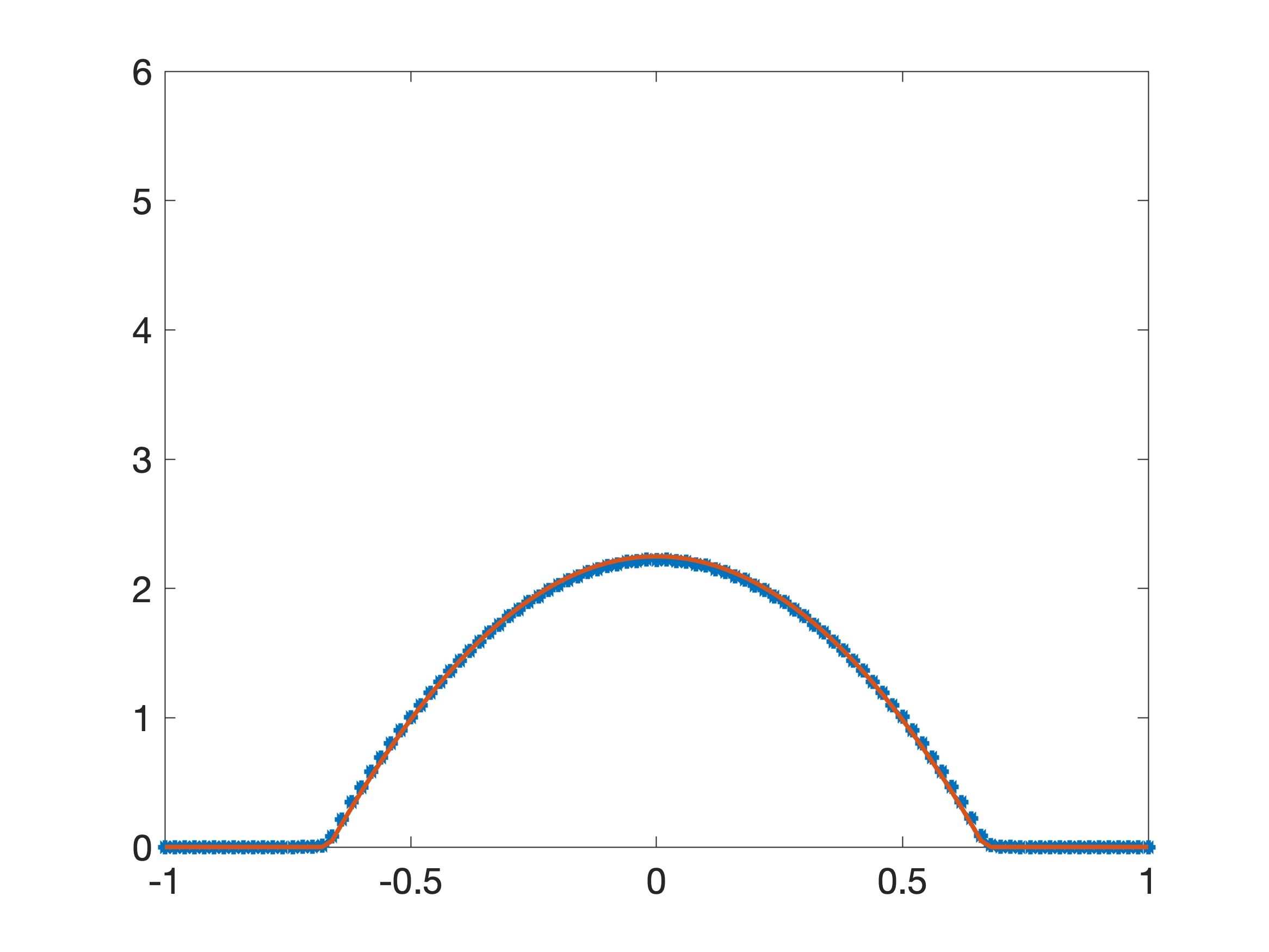}
    \caption{
    The same results as shown in Figure~\ref{fig:PME_data_no_noise_expected_value_double_shift} but using also variance data $B_h^2(u_h)$ as data ($\vartheta=1$).
    \label{fig:PME_data_no_noise_expected_value_and_variance_double_shift}}
\end{figure}

In order to improve the data fitting, we repeat the experiment for $\bar x=0.1$ and $\bar t=6\tau$, using additional variance data of the ground truth, i.e., $\vartheta=1$ in the definition of $B_h$ after \eqref{eq:observation_disc_variance}. Given that the value of $m$ is known to us, we can determine the Barenblatt profile by its expected value and variance. Therefore, we expect that the \daJKO scheme can match the true profile better, when the expected value and the variance are used as measurements. This expectation is confirmed by the results shown in Figure~\ref{fig:PME_data_no_noise_expected_value_and_variance_double_shift}.

\subsubsection{Unknown $m$}\label{sec:perturbed_m}
In addition to perturbed initial data, we consider model perturbations as a next experiment. We assume that $m^\dagger=2$ describes the true dynamics, while the computational model employs the perturbed value $m=2.5$.
Except for $\theta=1/400$, we keep the parameters as in the previous section. In particular, we use a shift $\bar x=0.1$ and $\bar t=6\tau$ for the initial condition.

In our first experiment we use again the expected value and the variance of the Barenblatt profile \eqref{eq:Barenblatt} with the ground truth values $m^\dagger$, $\bar x=0$, $\bar t=0$.
The results of the \daJKO scheme are displayed in Figure~\ref{fig:PME_data_no_noise_expected_var_m}, together with the groundtruth Barenblatt profile and a Barenblatt profile for the perturbed parameter $m=2.5$. We observe that the \daJKO scheme is able to correctly centre the numerical solution. Furthermore, by inspecting the supports of the three functions, we observe that using available data can correct for the speed of propagation. Comparing to the results of the previous section, the numerical approximation does, however, not match the ground truth solution very well.
This can be explained by the fact that the Barenblatt profiles for $m=2.5$ and $m^\dagger=2$ have different masses.

\begin{figure}
    \centering
    \includegraphics[width=0.32\textwidth]{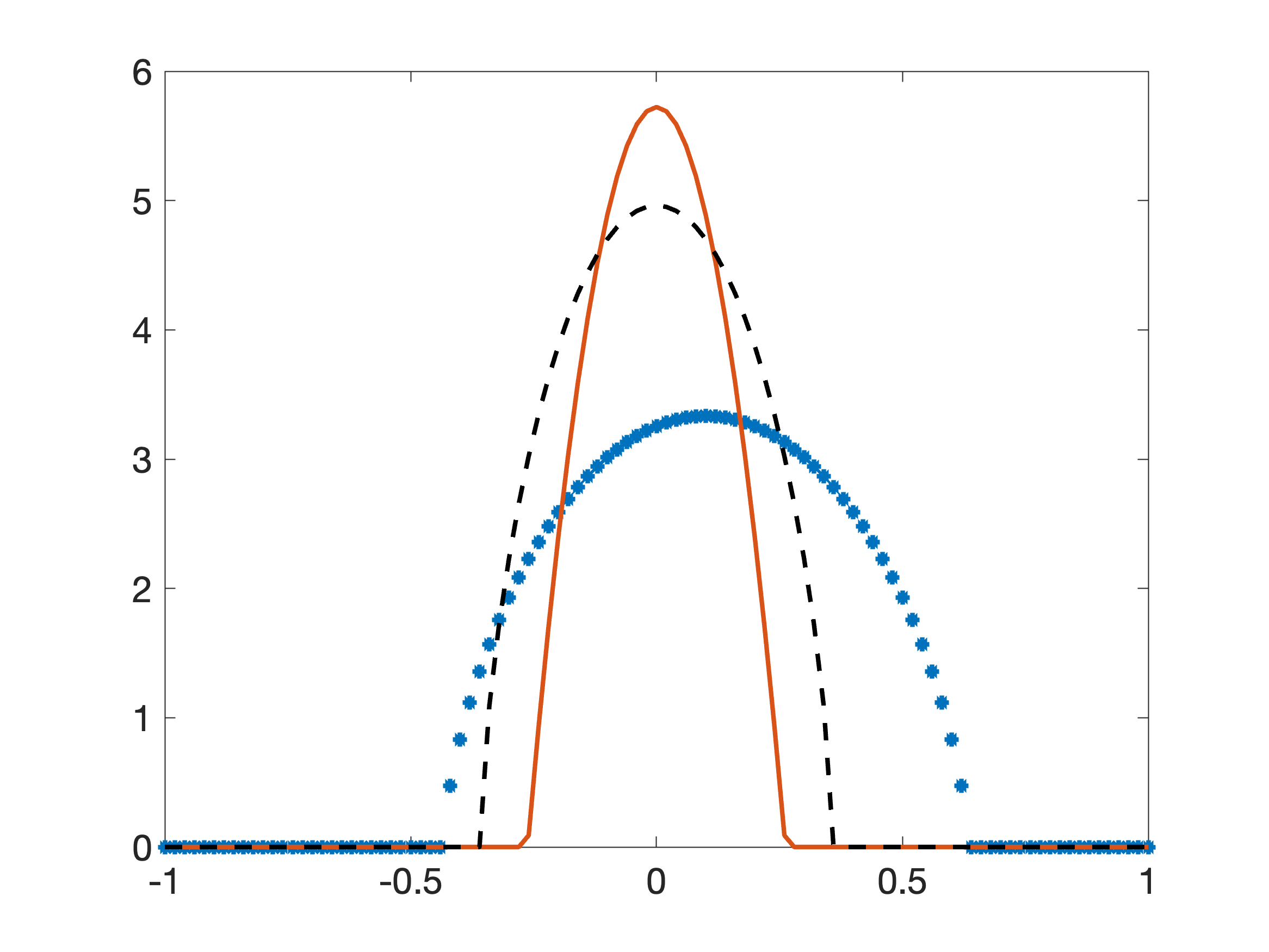}
    \includegraphics[width=0.32\textwidth]{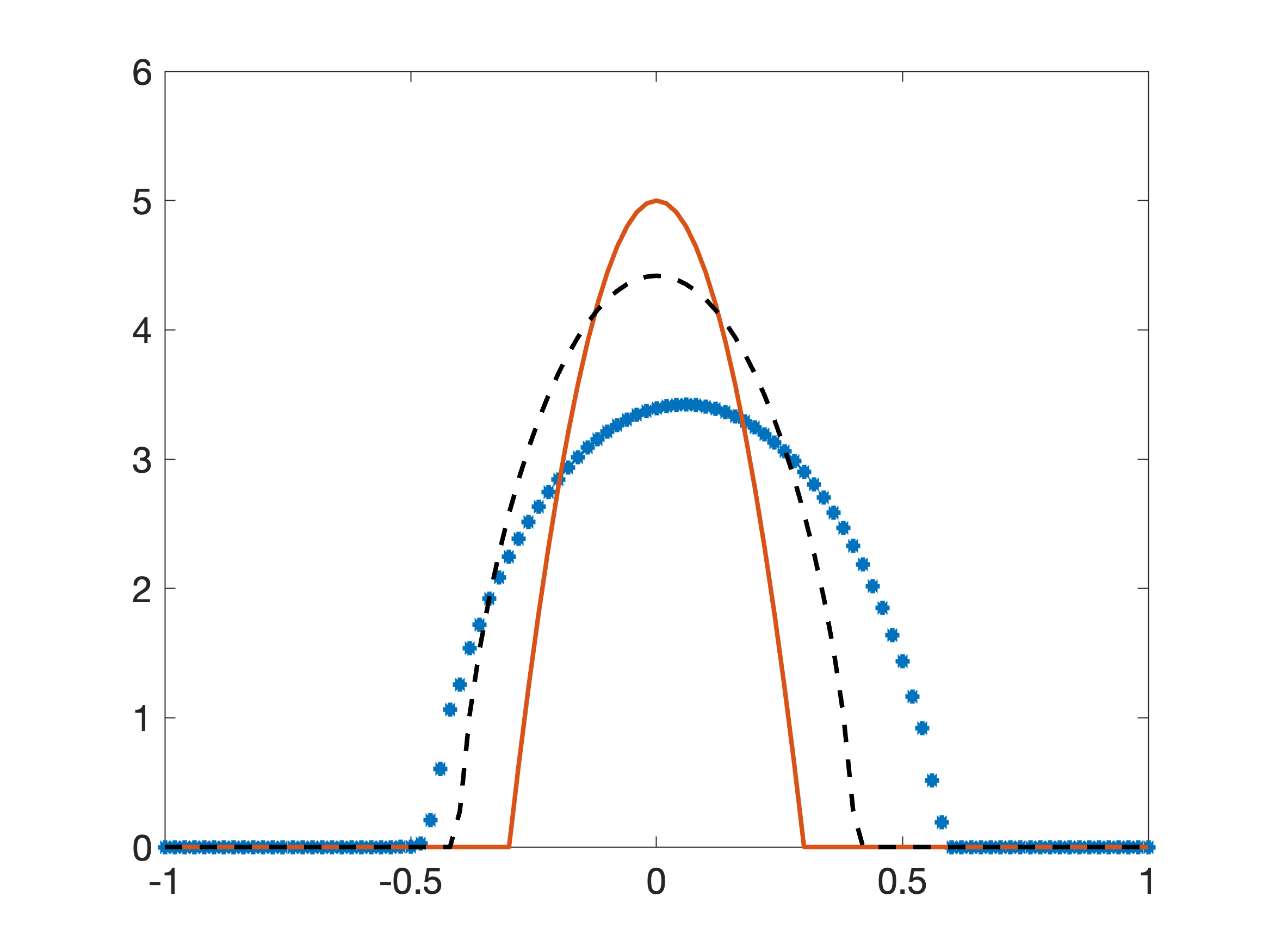}
    \includegraphics[width=0.32\textwidth]{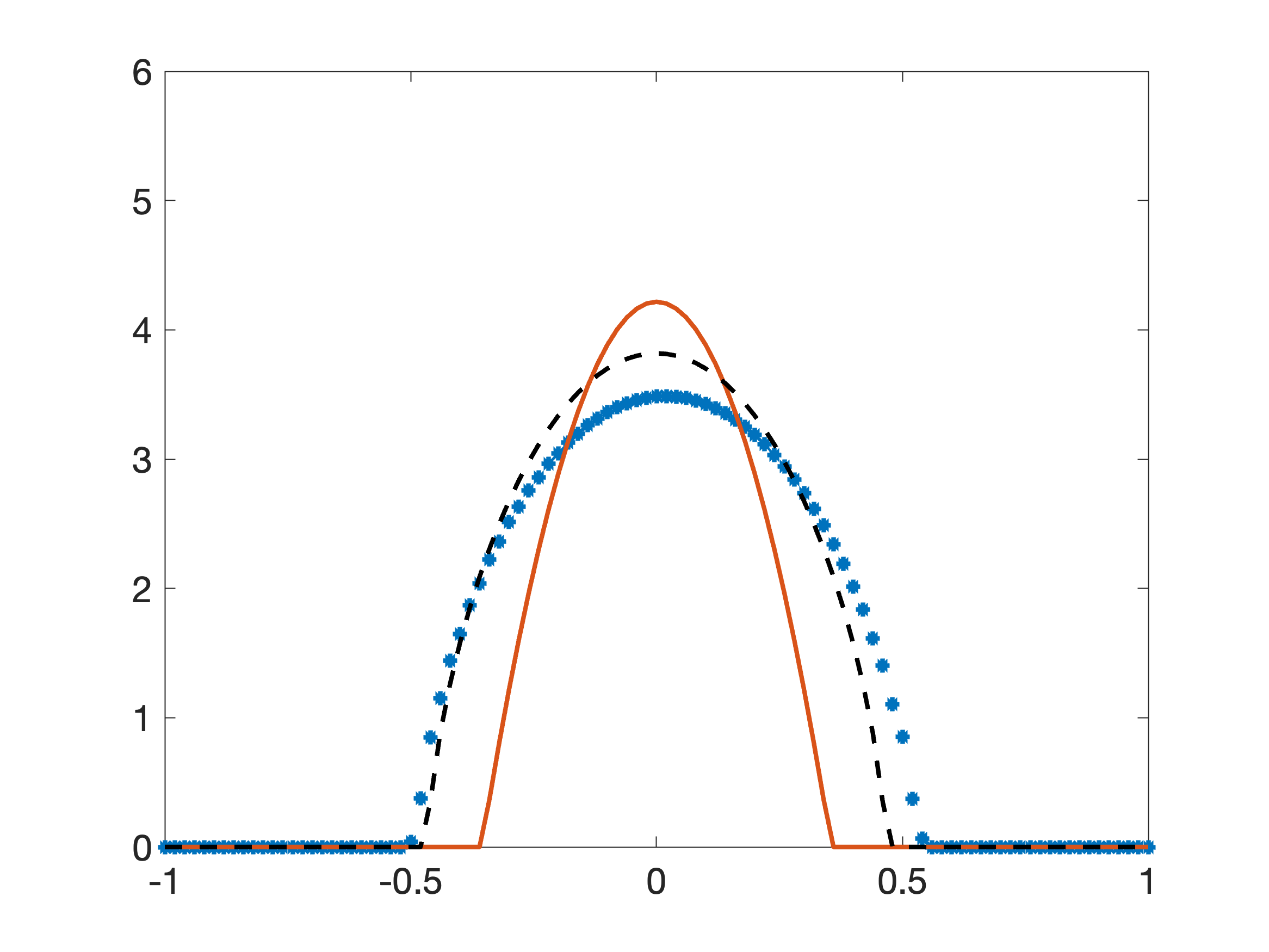}\\
    \includegraphics[width=0.32\textwidth]{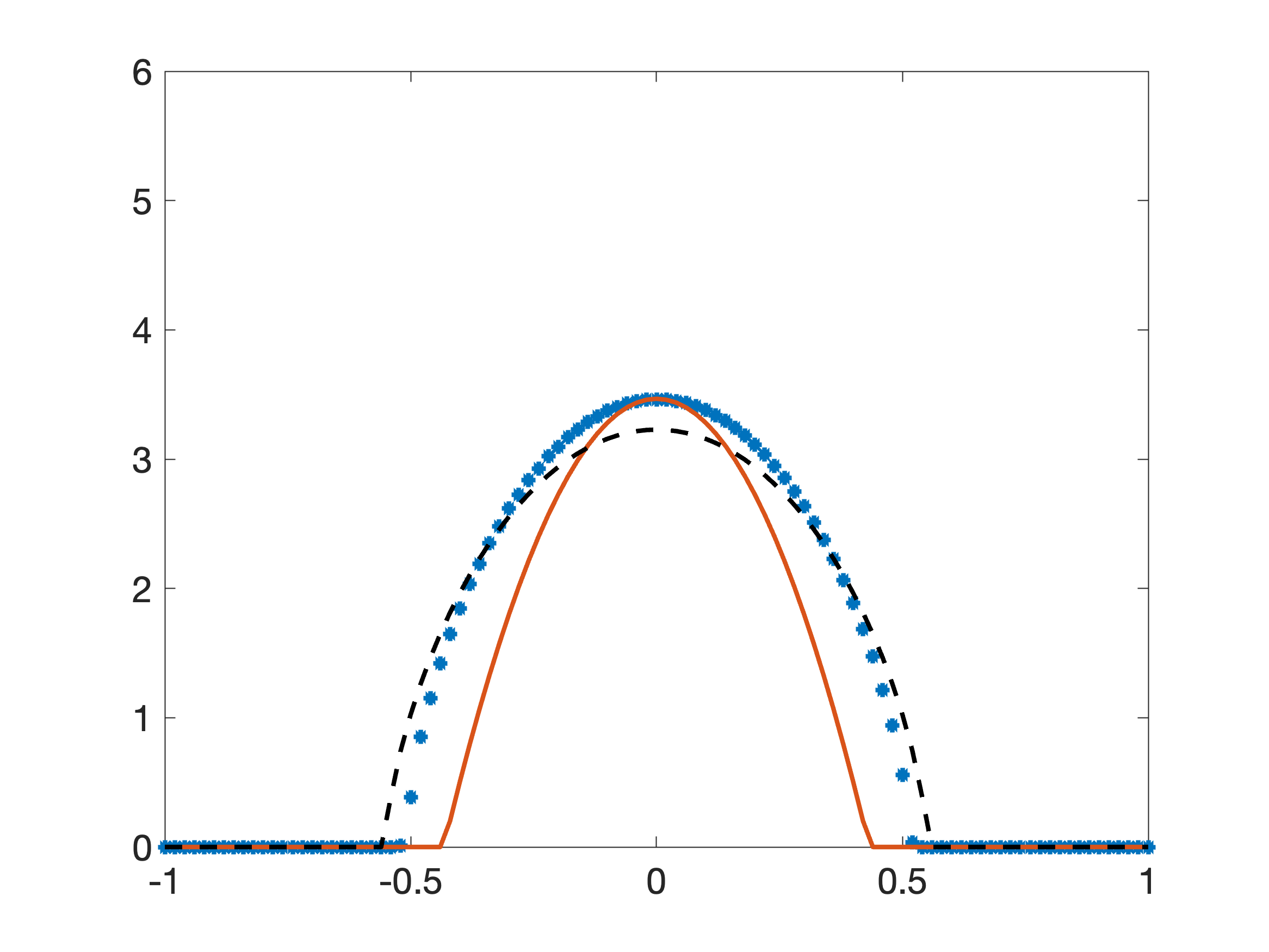}
    \includegraphics[width=0.32\textwidth]{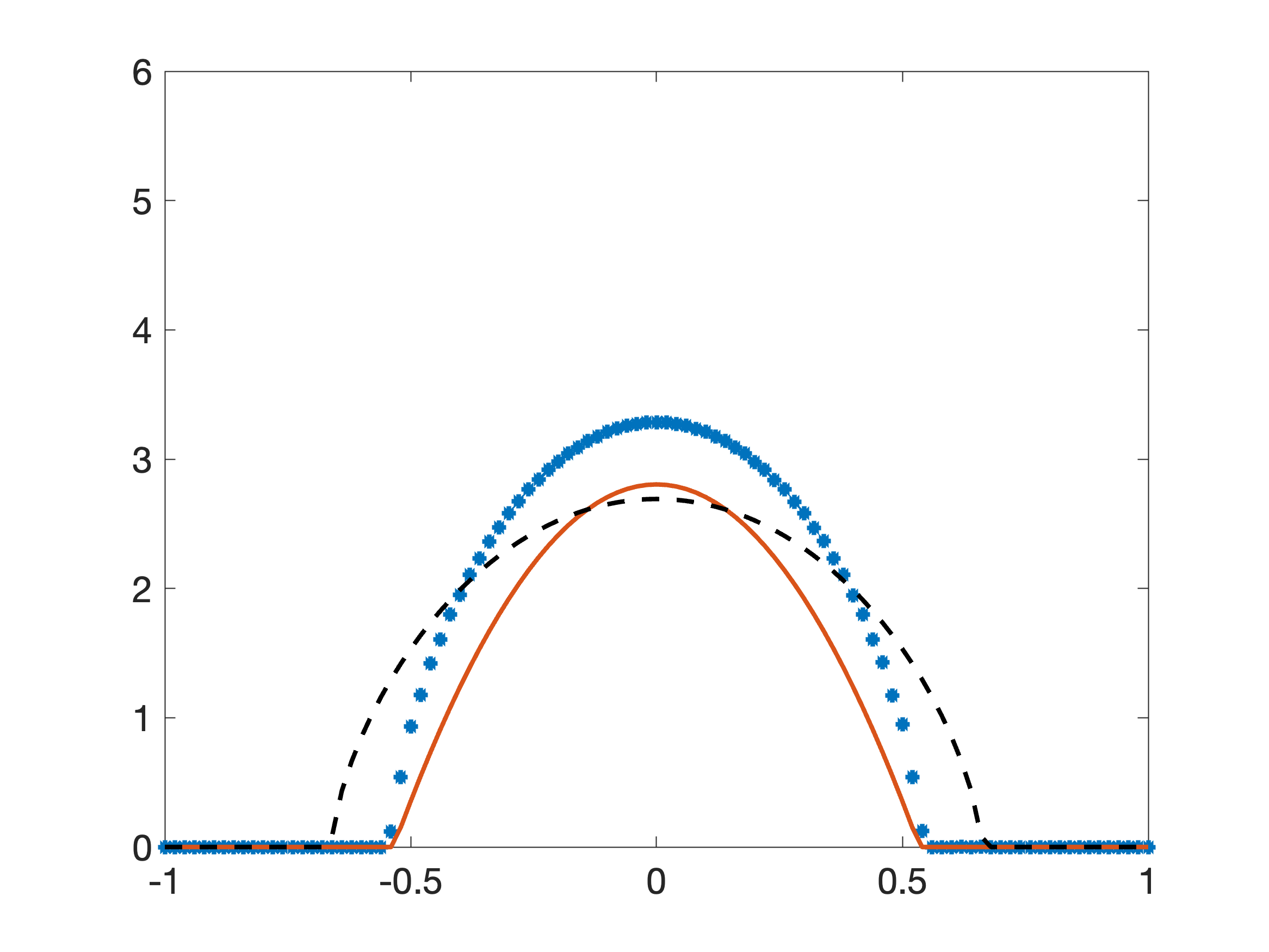}
    \includegraphics[width=0.32\textwidth]{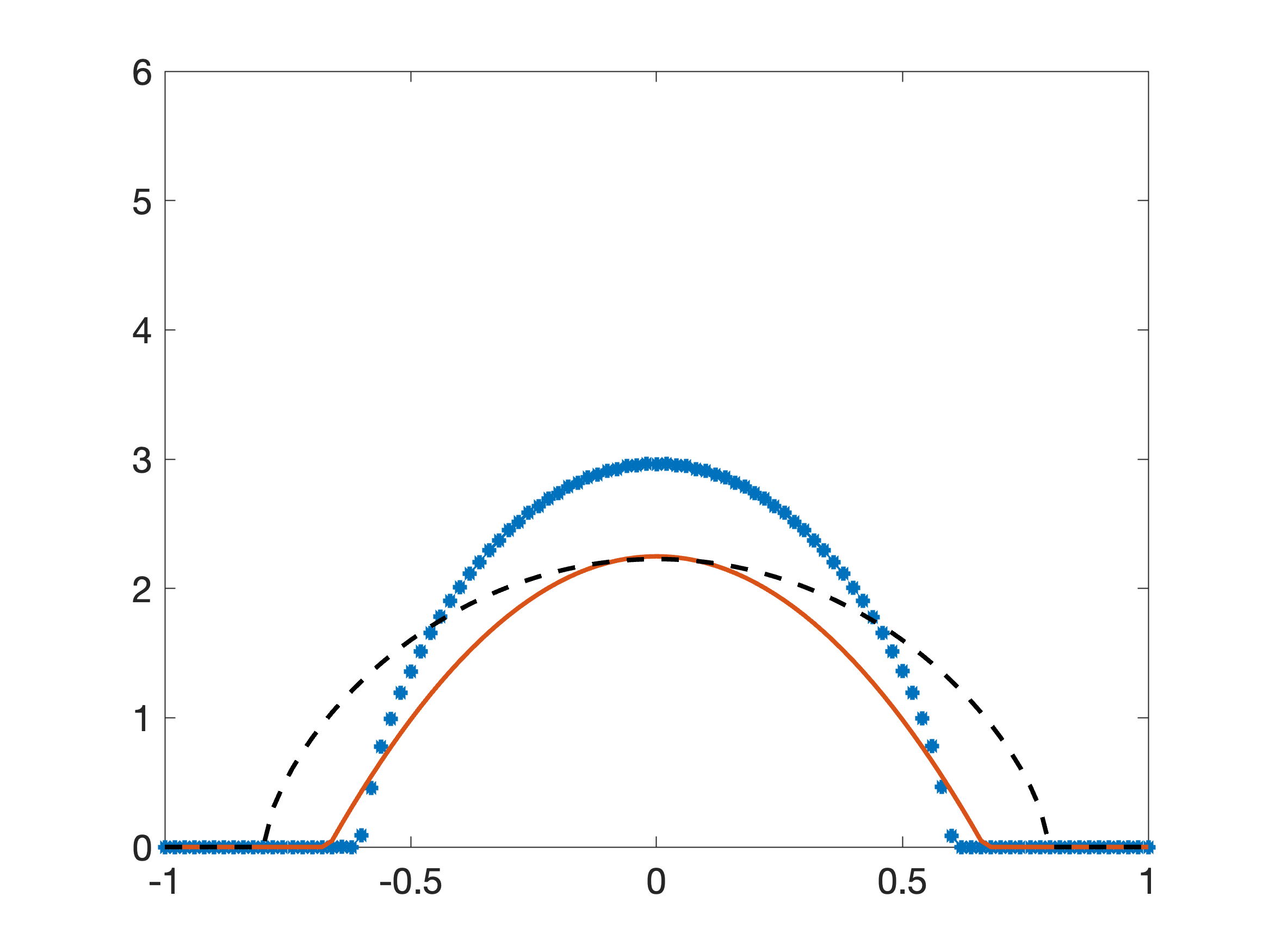}
    \caption{Barenblatt solution for $m^\dagger=2$ (solid red), perturbed $m=2.5$ (dashed black) and its \daJKO approximation (dotted blue) using the perturbed $m=2.5$, a shifted initial condition \eqref{eq:initial_shifted_space} with $\bar x=0.1$ and $\bar t=6\tau$ and expectation $B_h^1(u_h)$ and variance data $B_h^2(u_h)$ for the data term ($\vartheta=1$). The results are shown for $k=2^i$ for $i=0,\ldots,5$ from top left to bottom right. \label{fig:PME_data_no_noise_expected_var_m}}
\end{figure}

Therefore, let us assume that we have access to the mass of the ground truth Barenblatt profile. Incorporating the mass constraint via an observation operator similar to \eqref{eq:observation_disc_expectation} counteracts enforcing \eqref{eq:mass_const_LS}. 
Despite this obvious mismatch, a computational consequence is that the underlying optimization, Algorithm~\ref{alg:JKOStep}, does not converge anymore within $2\times 10^5$ iterations.
Therefore, we initialize the \daJKO scheme differently, by scaling the perturbed initial condition computed from \eqref{eq:Barenblatt}, \eqref{eq:initial_shifted_space} with $\bar x=0.1$, $\bar t=6\tau$ and $m=2.5$ such that its mass matches the mass of the ground truth. The results of this approach are displayed in Figure~\ref{fig:PME_data_no_noise_expected_var_mass_m}. In this case, the \daJKO scheme is able to match the ground truth much better.

\begin{figure}
    \centering
    \includegraphics[width=0.32\textwidth]{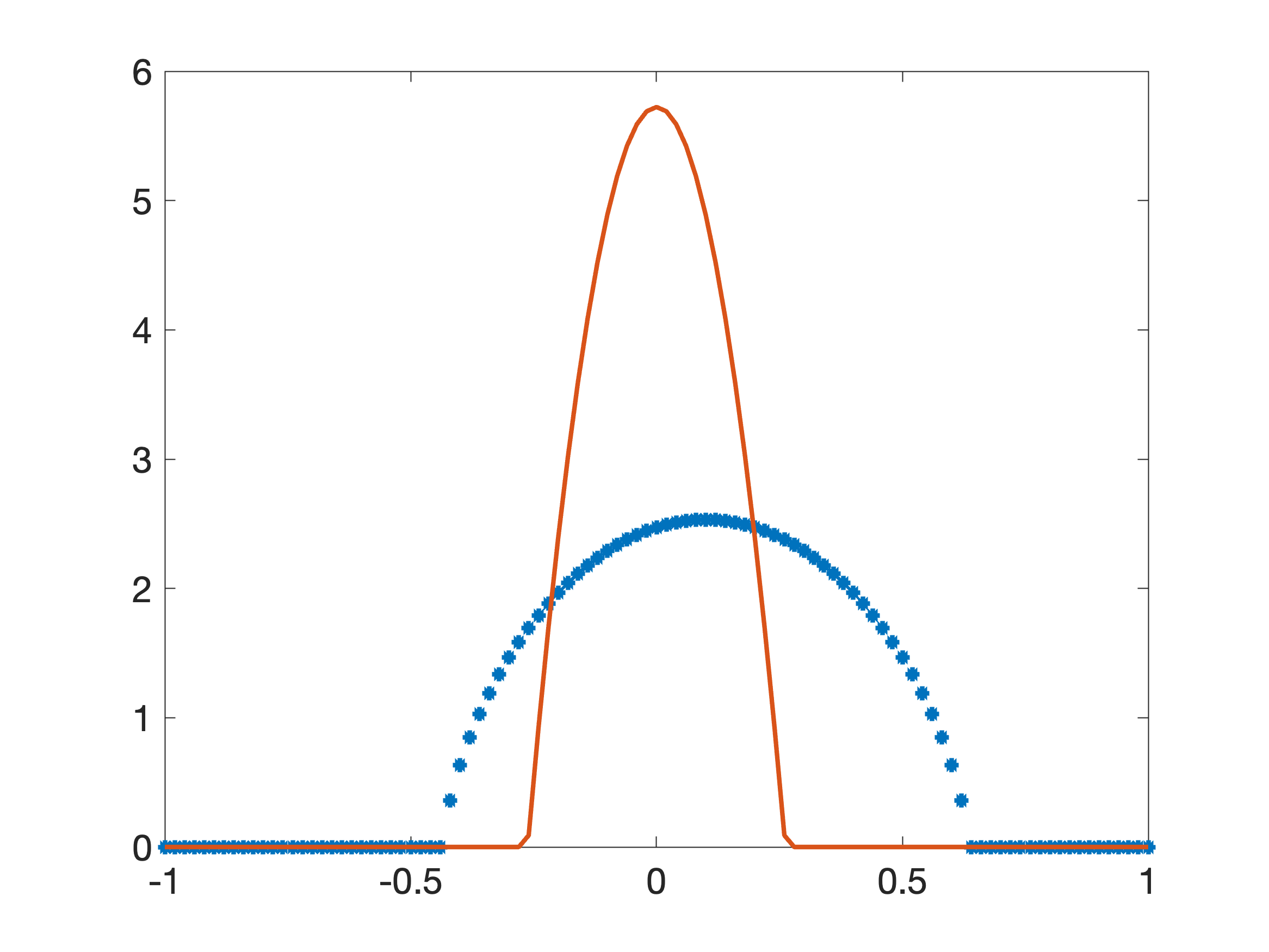}
    \includegraphics[width=0.32\textwidth]{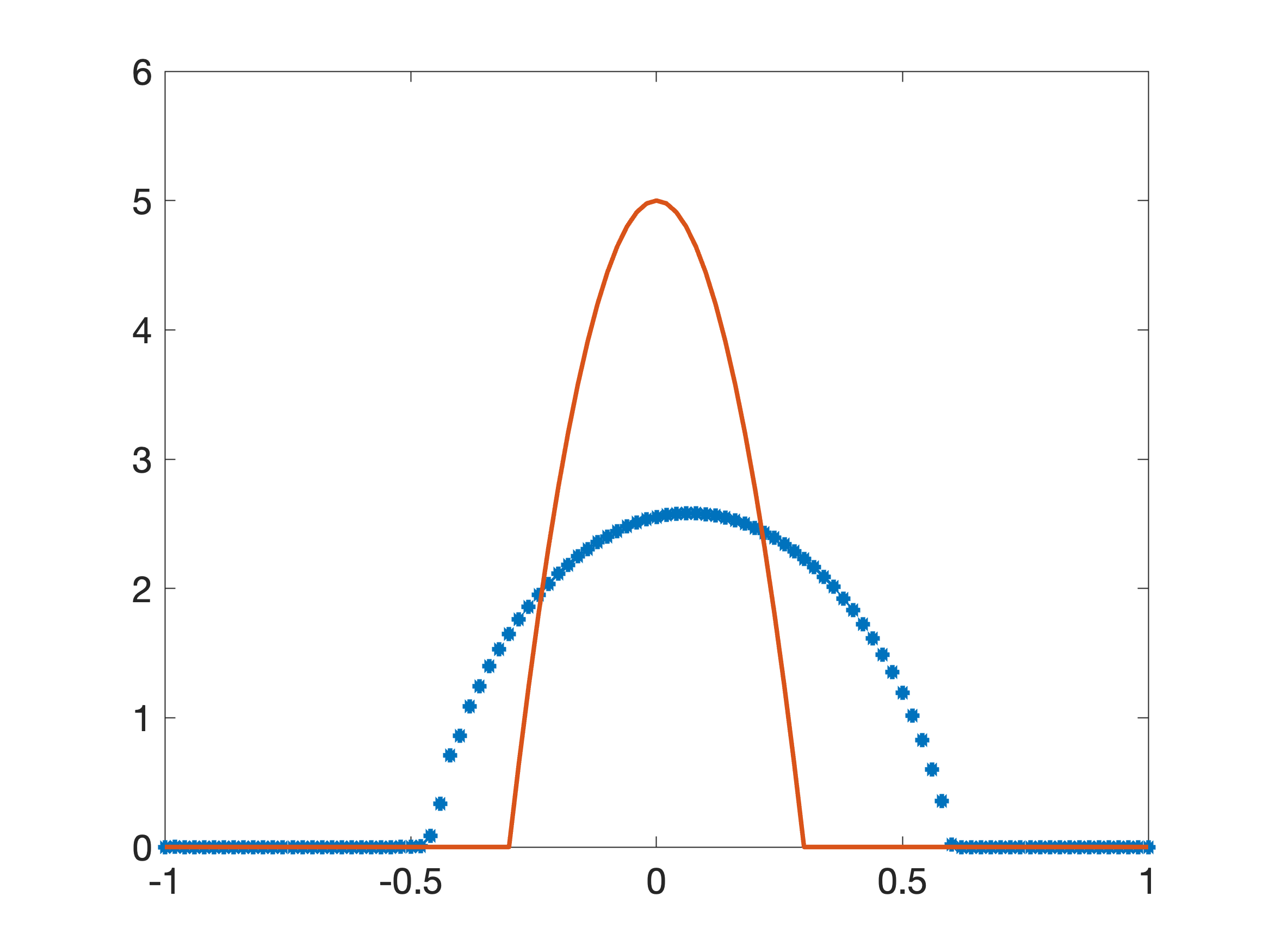}
    \includegraphics[width=0.32\textwidth]{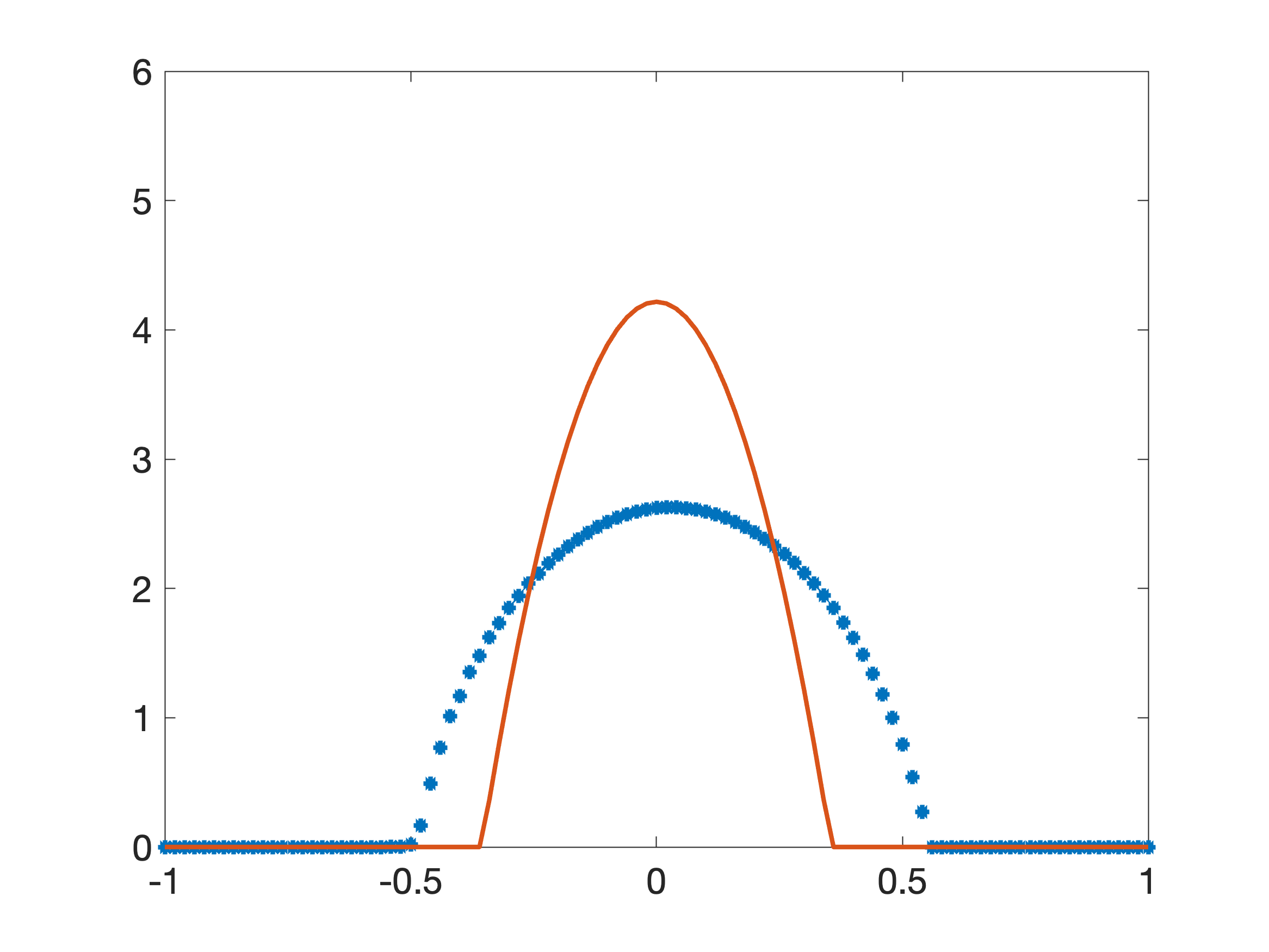}\\
    \includegraphics[width=0.32\textwidth]{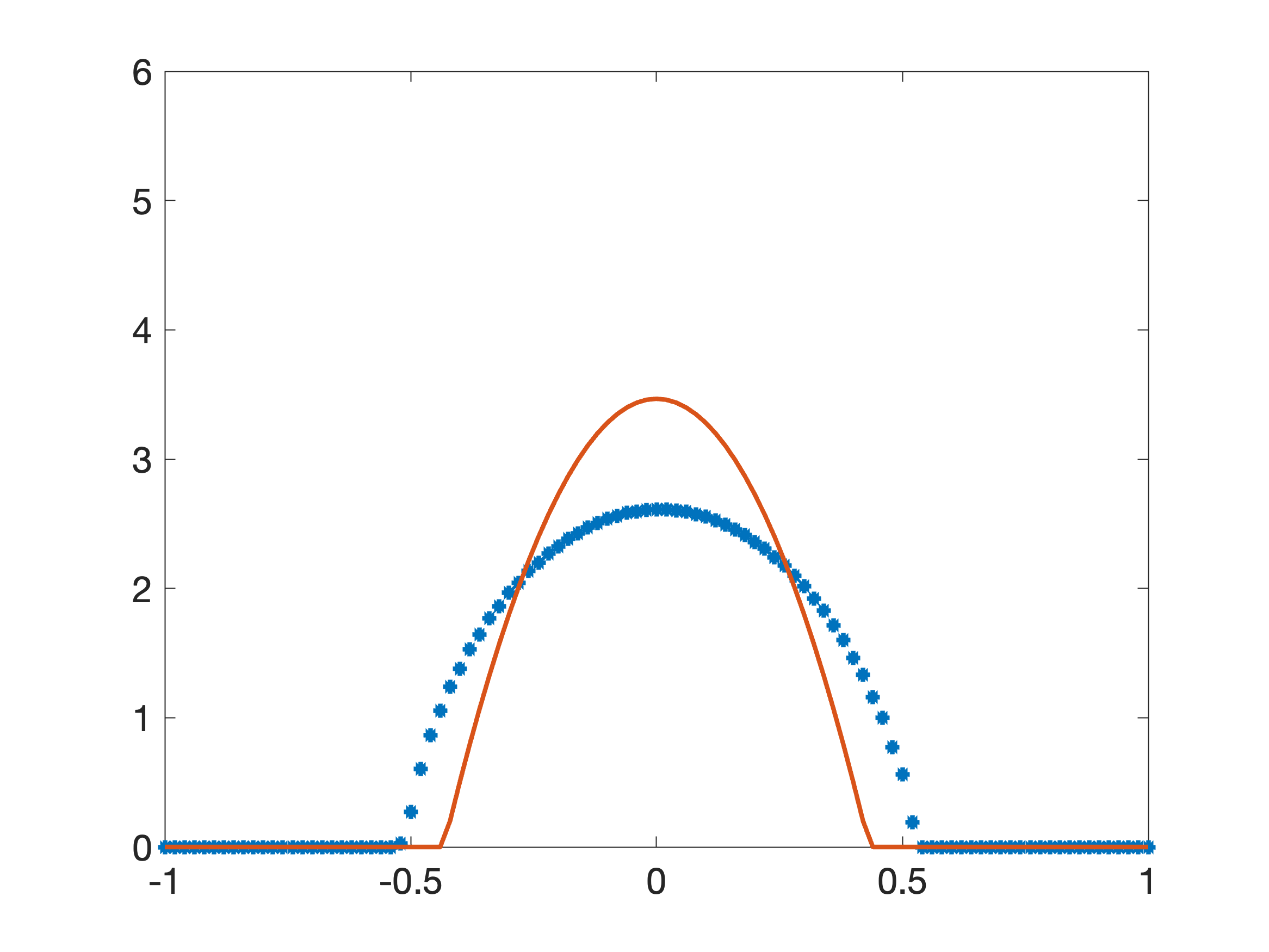}
    \includegraphics[width=0.32\textwidth]{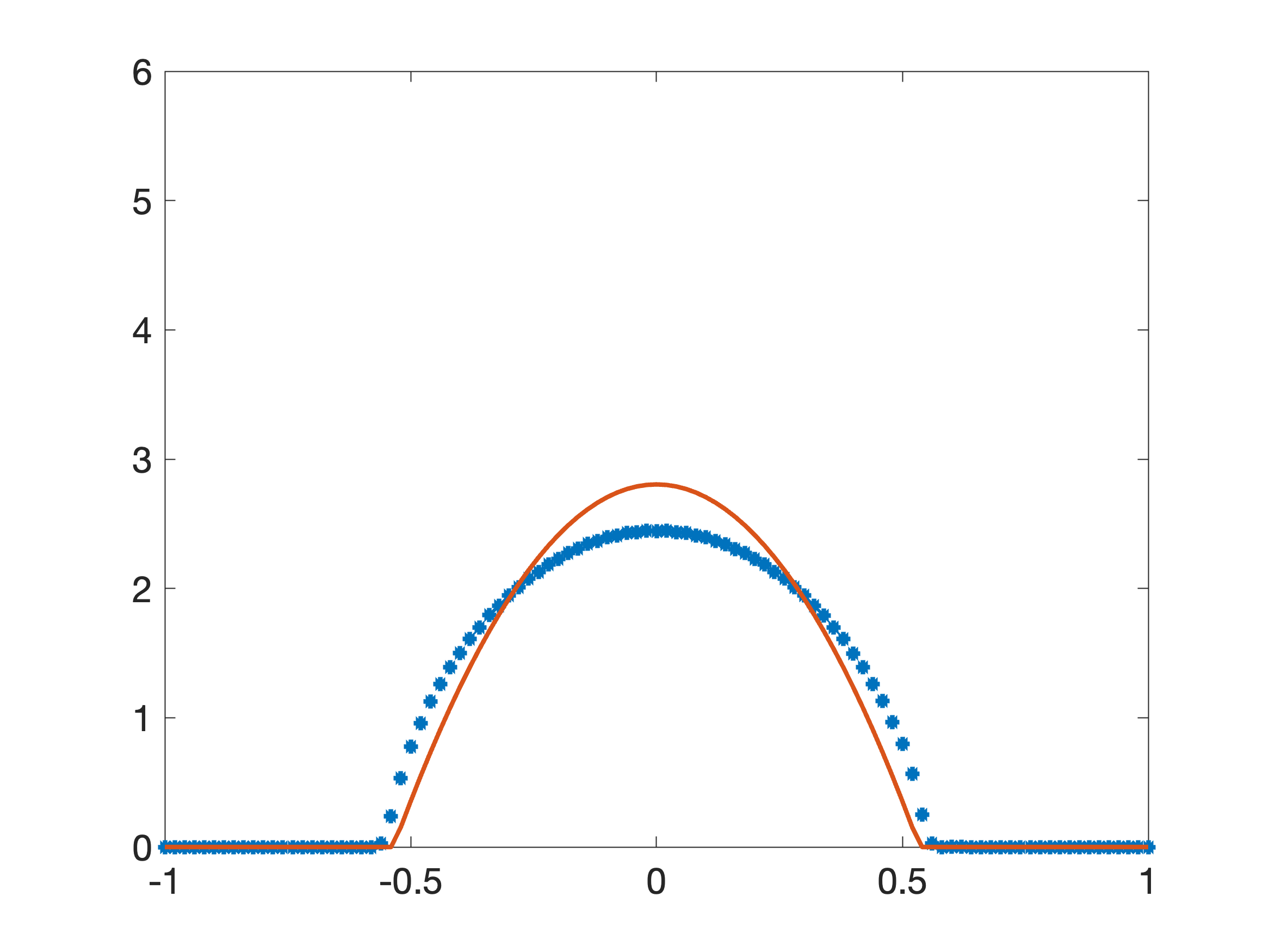}
    \includegraphics[width=0.32\textwidth]{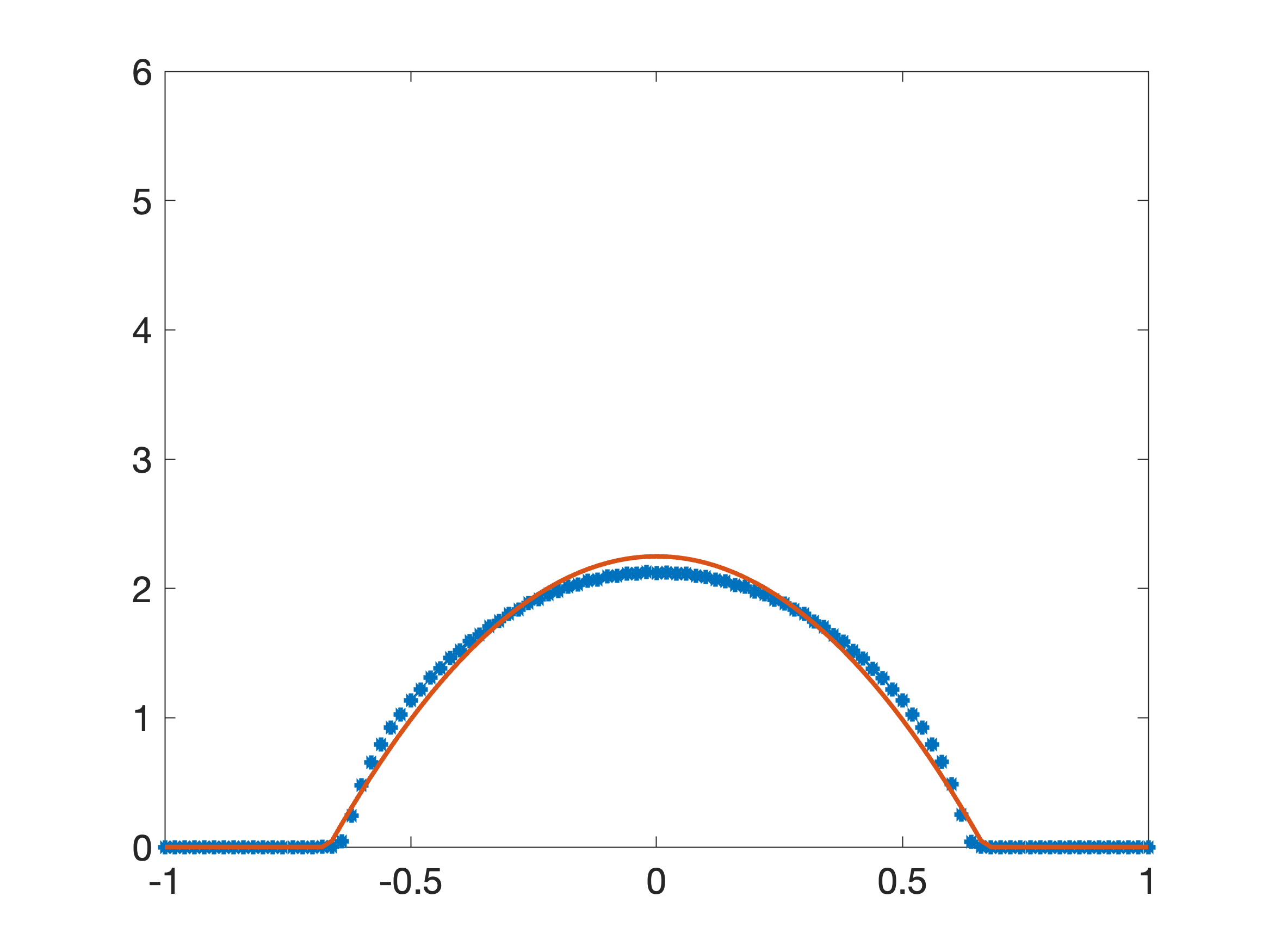}
    \caption{Barenblatt solution for $m^\dagger=2$ (solid red) and its \daJKO approximation (dotted blue) using the perturbed $m=2.5$, a shifted and scaled initial condition \eqref{eq:initial_shifted_space} with $\bar x=0.1$ and $\bar t=6\tau$ and expectation $B_h^1(u_h)$ and variance data $B_h^2(u_h)$ for the data term ($\vartheta=1$). The results are shown for $k=2^i$ for $i=0,\ldots,5$ from top left to bottom right. \label{fig:PME_data_no_noise_expected_var_mass_m}}
\end{figure}

\subsection{Chemotaxis with logarithmic kernel}
Our second example is the chemotaxis model presented in \cite{Blanchet2008_KellerSegelJKO}. Chemotaxis refers to the directed motion of bacteria in response to the gradient of a  chemical signal. Usually the bacteria move along the negative gradient of the chemical which, for example, drives them towards a food source. Denoting by $u=u(x,t)$ the density of bacteria and by $c=c(x,t)$ that of the chemical potential, the model reads as
\begin{equation}\label{eq:Chemotaxis}
\begin{aligned}
\partial_t u &=\Delta u-\chi \nabla \cdot[u \nabla c], & t>0, x \in \mathbb{R}^{d},\\
c(t, x)&=-\frac{1}{d \pi} \int_{\mathbb{R}^{d}} \log |x-y| u(t, y) \mathrm{d} y, & t>0, x \in \mathbb{R}^{d}, \\
u(0, x)&=u_0 \geq 0, & x \in \mathbb{R}^{d}.
\end{aligned}
\end{equation}
Again, this model constitutes a $2$-Wasserstein gradient flow with respect to the energy
\begin{align}
    F(u) = \int_\R^d u(x) \log (u(x))\;dx +\frac{\chi}{2d\pi}\int_{\R^d\times\R^d} \log |x-y|u(x)u(y)\;dxdy.
\end{align}
There has been a great interest in the mathematical analysis of chemotaxis models since many of them exhibit an interesting dichotomy: If their initial mass in under a given threshold, solutions exist for all time. If, on the other hand, the mass is large enough, a finite time blow-up occurs \cite{Blanchet2006}. For \eqref{eq:Chemotaxis}, the dichotomy also depends on the value of the sensitivity parameter $\chi$. Denoting by $M$ the mass on the initial datum, for $M\chi > 2d^2\pi$, finite time blow-up will occur while for $M\chi < 2d^2\pi$ solutions exist globally in time \cite{Calvez2010_log_blowup}.

In this example, we demonstrate that the \daJKO scheme can prevent blow-up. Therefore, we assume that the true value of $\chi=2$ and the initial condition is given by
\begin{align*}
    \rho_0(x) = G(x-1/3) + G(x+1/3)\quad\text{with } G(x)=\frac{1}{2 \pi\eta^2} e^{-|x|^2/\eta^2}
\end{align*}
and $\eta=1/5$. We have that $\int_\Omega\rho_0 \dx\approx 2.8$, and hence no blow-up occurs.
If, however, a wrong value of $\chi=10$ is used, the solution will blow-up, see Figure~\ref{fig:chemo}.
As in the previous section, we employ the expected value and the variance of the unperturbed solution ($\chi=2$) to prevent blow-up in the perturbed model ($\chi=10$); cf. \eqref{eq:observation_disc_expectation} and \eqref{eq:observation_disc_variance} for the definition of the observations. We change $\tau$ to $10^{-3}$, and keep the other parameters as in the previous section. Similar to \cite{Carrillo2022_PrimalDual}, we regularize the logarithmic convolution kernel by approximating it at $0$ by an integral average over two grid cells.
As can be seen from Figure~\ref{fig:chemo}, the \daJKO approximation does not blow-up in time. However, the \daJKO approximation does not fit the ground truth as good as in the previous section, which might be explained by the rather large perturbation in $\chi$ and the lack of sufficient data. This is in line with the analytical results in \cite{Calvez2010_log_blowup} which show that if the mass is above the critical mass, the second moment of the solution will eventually become negative which is a contradiction to the non-negativity of the solution. Thus, it makes sense that steering the second moment via measurements is able to prevent the blow-up.

\begin{figure}
    \centering
    \includegraphics[width=0.32\textwidth]{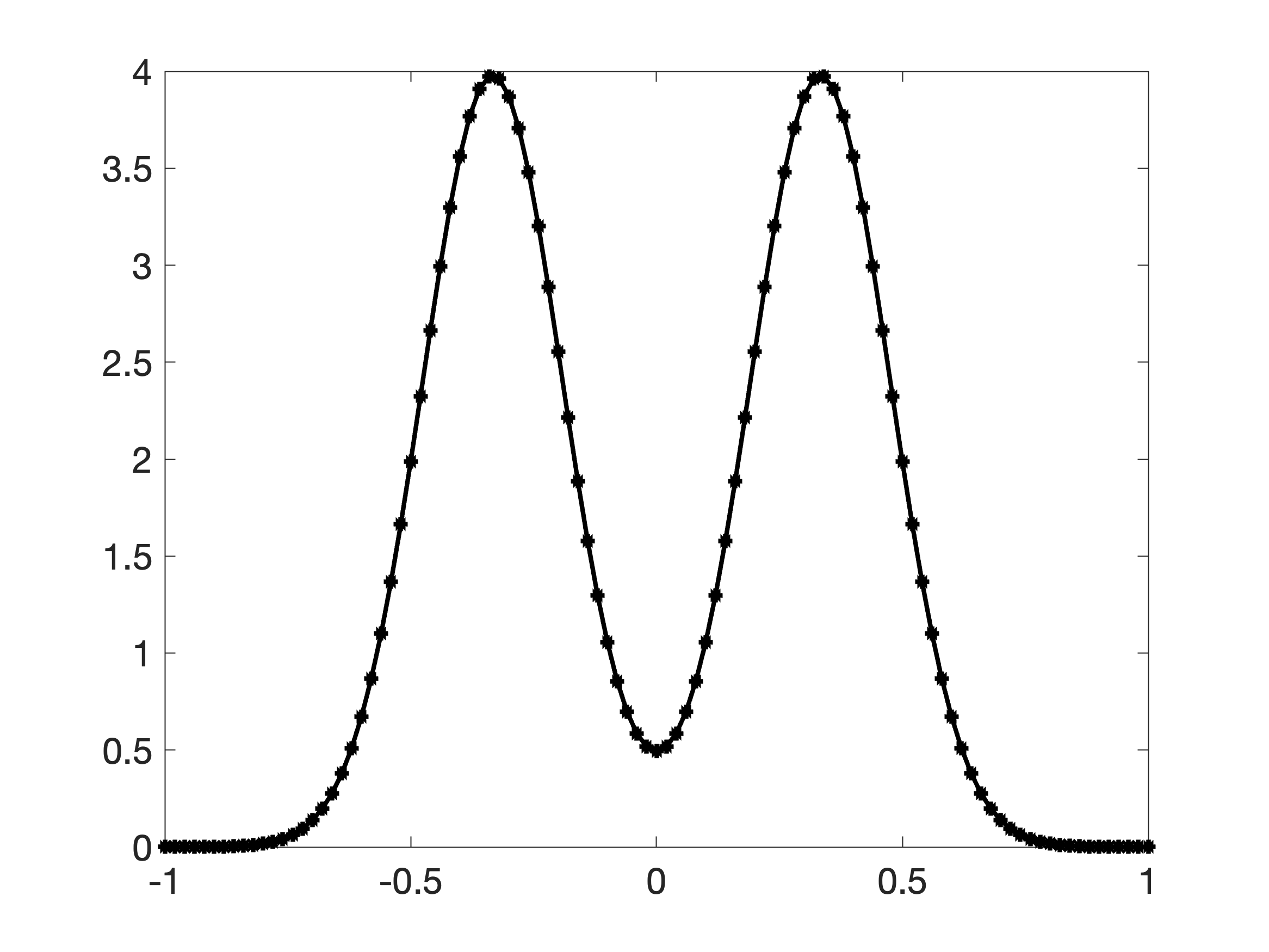}
    \includegraphics[width=0.32\textwidth]{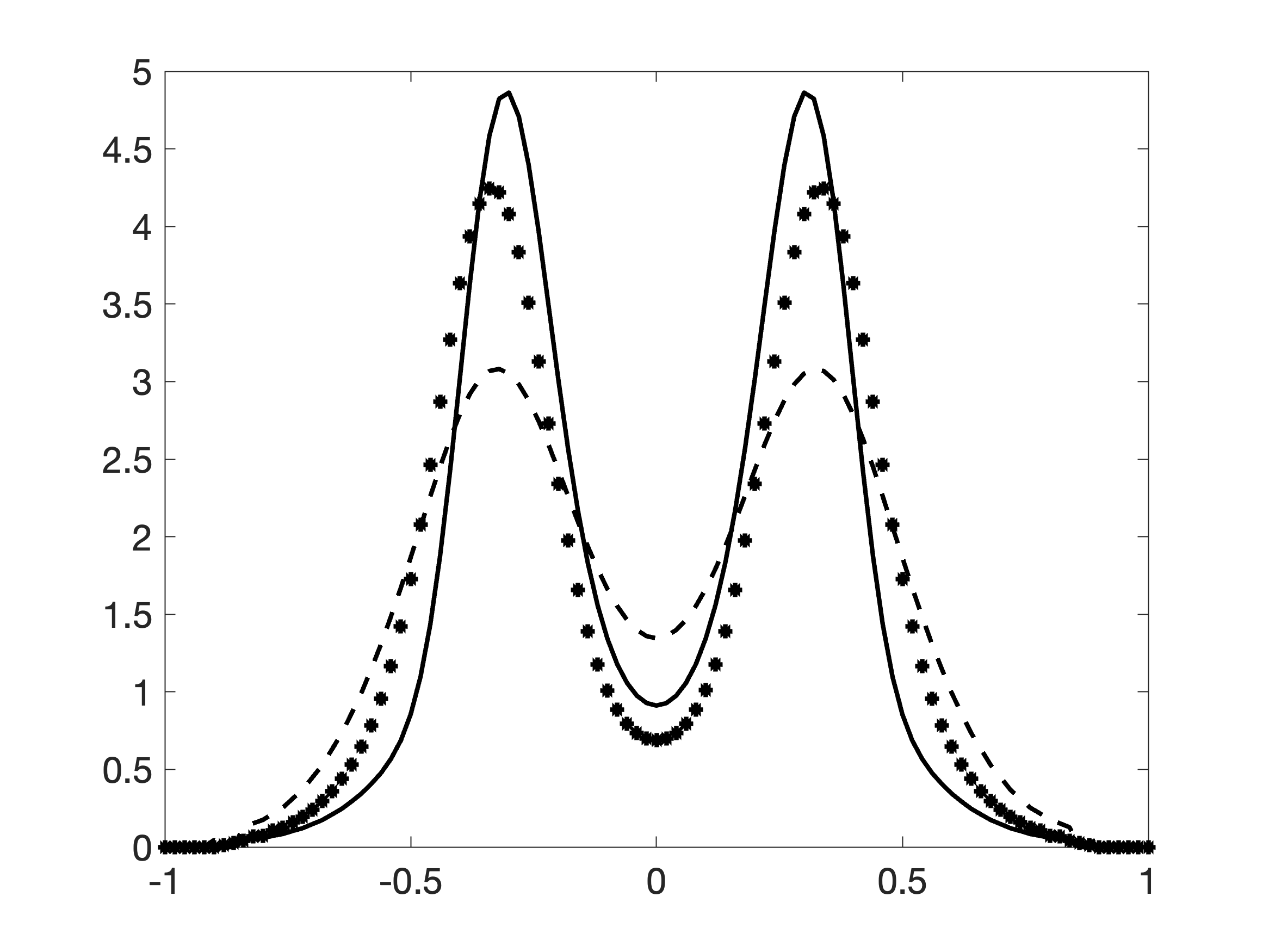}
    \includegraphics[width=0.32\textwidth]{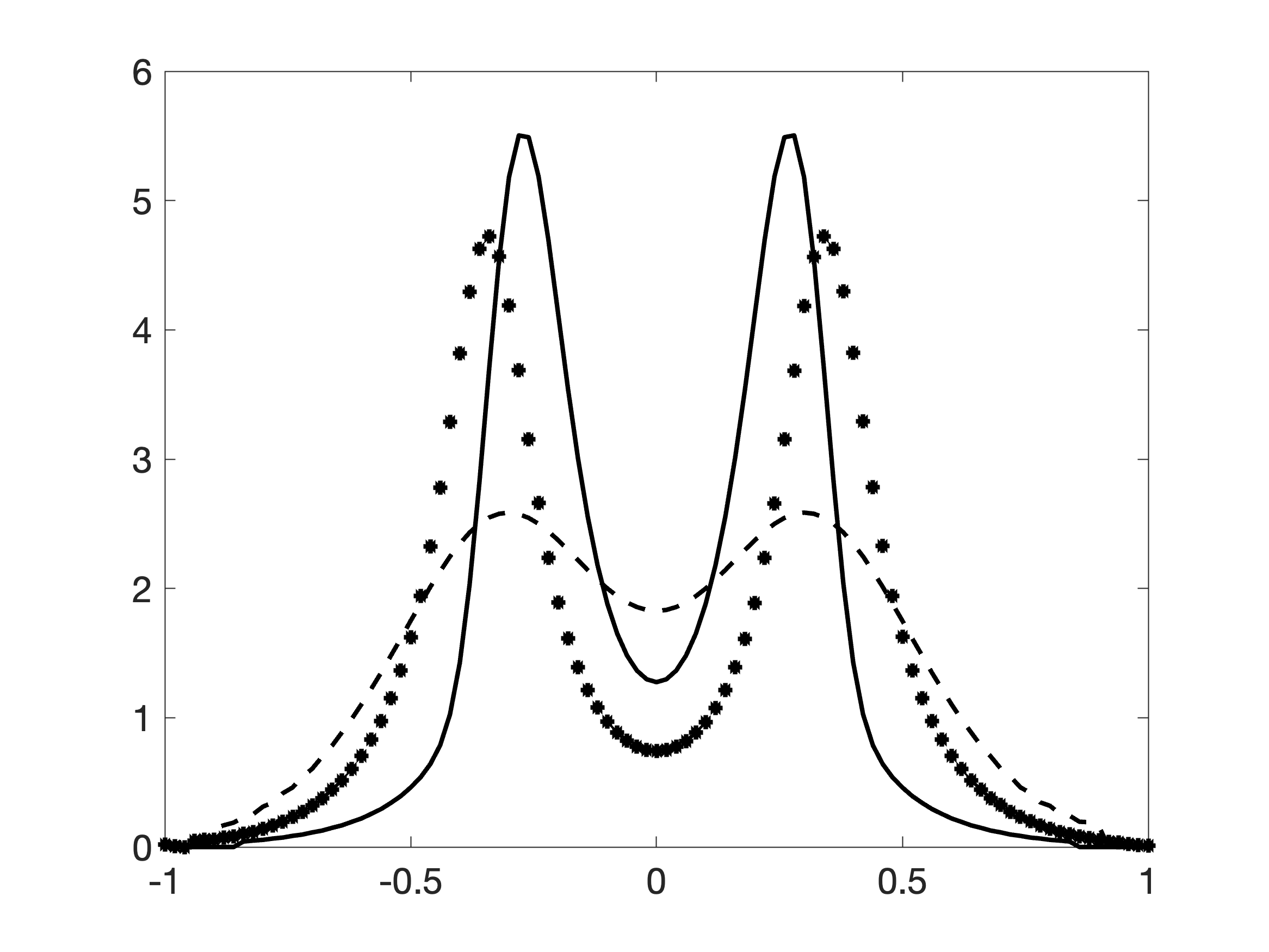}\\
    \includegraphics[width=0.32\textwidth]{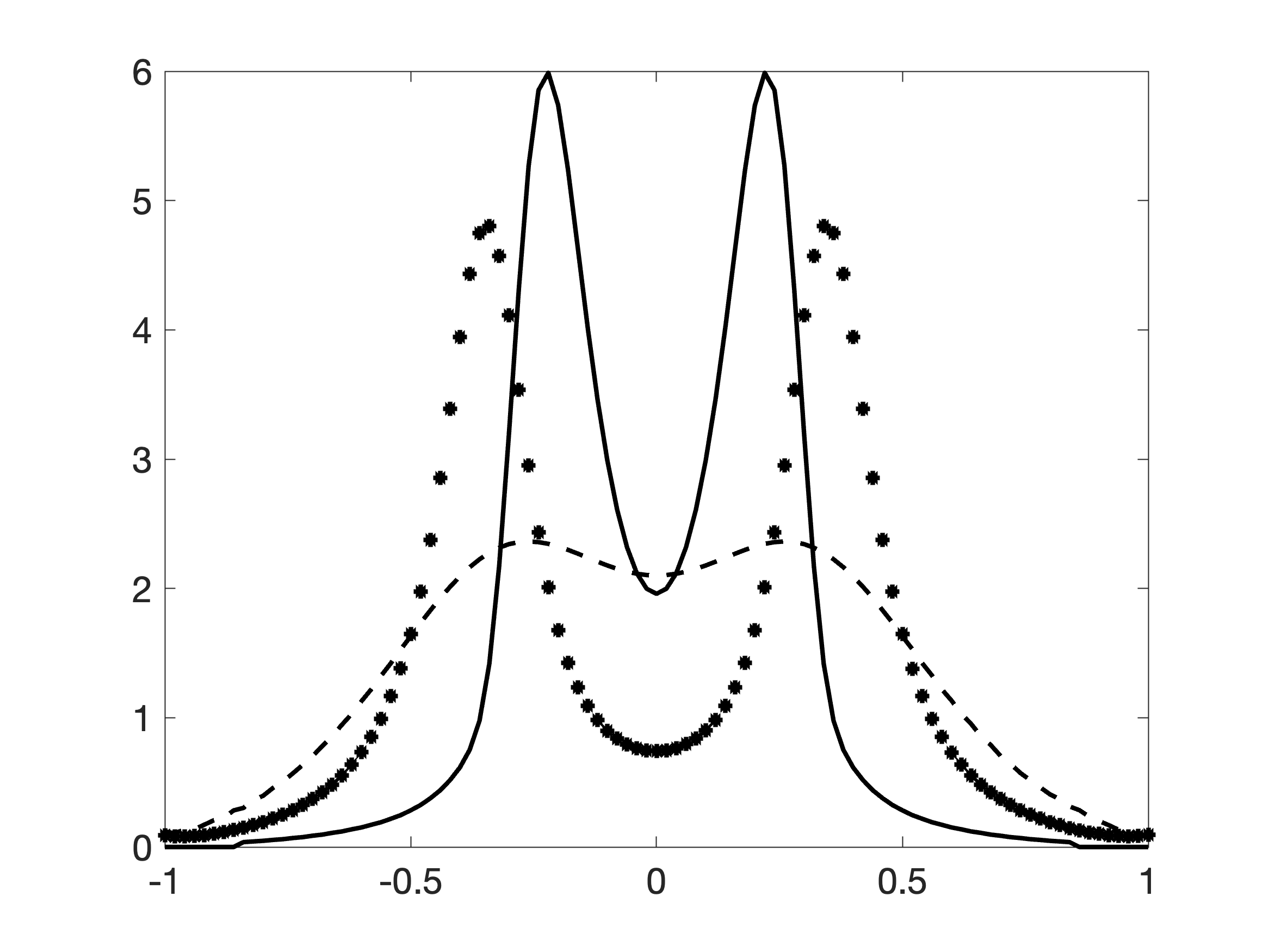}
    \includegraphics[width=0.32\textwidth]{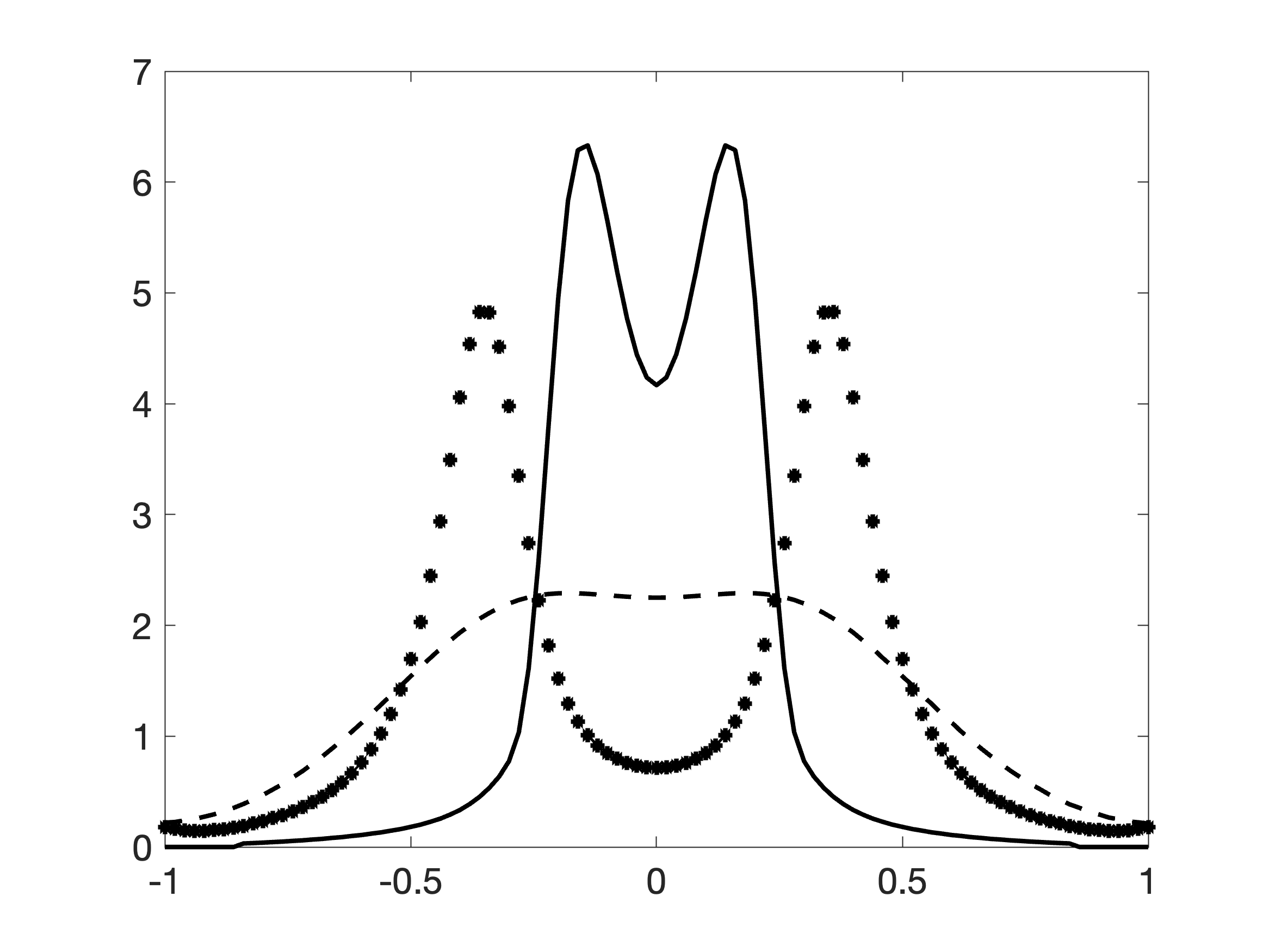}
    \includegraphics[width=0.32\textwidth]{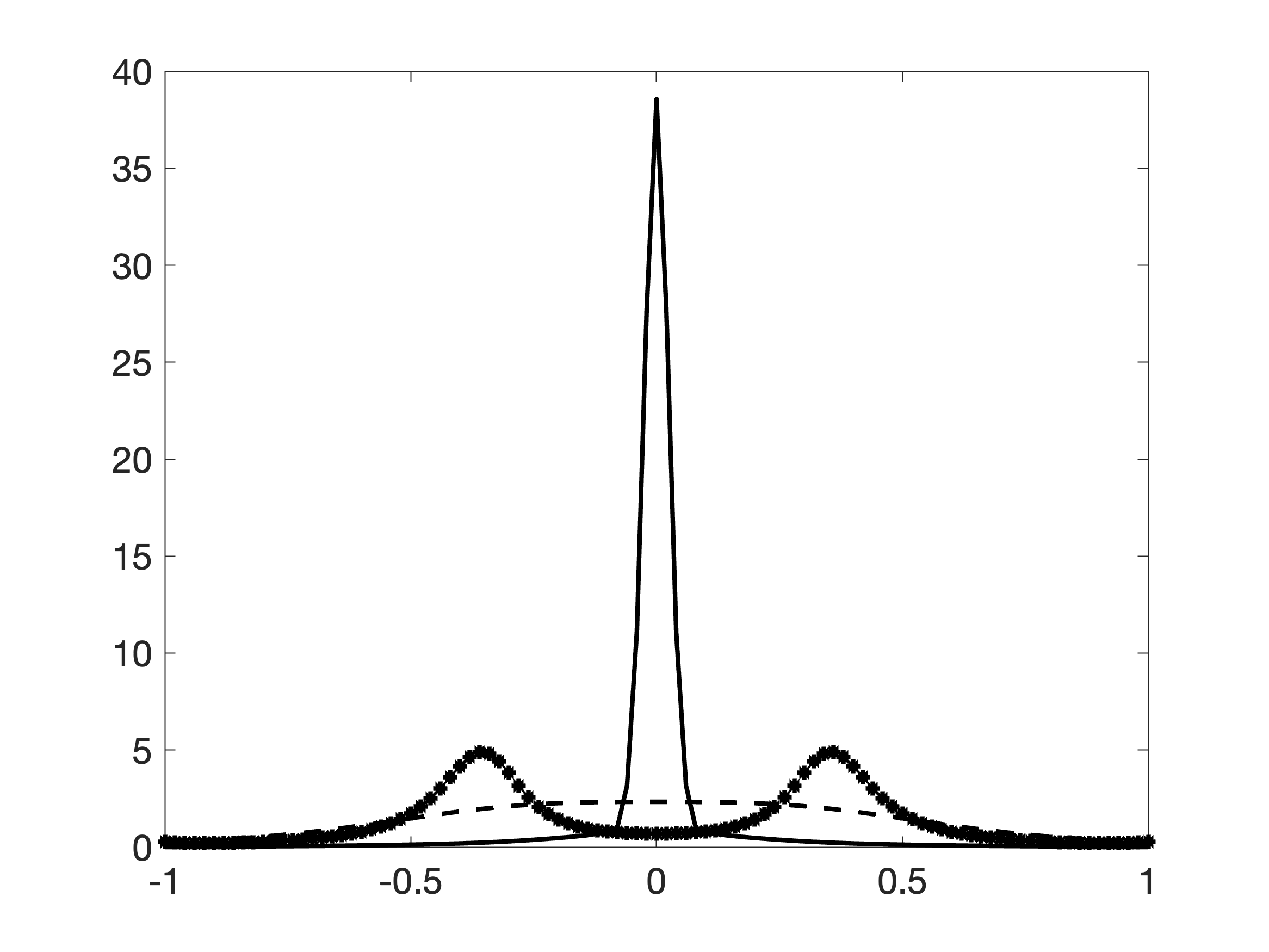}
    \caption{Solution to the chemotaxis equations \eqref{eq:Chemotaxis} for $\chi=2$ (dashed) and for $\chi=10$ (solid) for $t=k\tau$ with $k=10^{i-1}$, $i=0,\ldots,5$. We use different scalings of the axis. The solution of the daJKO scheme (dotted) remains bounded. \label{fig:chemo}}
\end{figure}

\begin{remark}
Model \eqref{eq:Chemotaxis} is a special case of the more general class of aggregation-diffusion equations of the form 
\begin{align*}
    \partial_t u = \nabla \cdot (\nabla u + u\nabla (K\ast u)),
\end{align*}
where $K: \R^d \to \R$ denotes a fixed convolution kernel. They are also Wasserstein GFs with respect to the energy
\begin{align}
    F(u) = \int_\Omega u\log(u)\;dx + \frac{1}{2}\iint_{\Omega \times \Omega}K(x-y)u(x)u(y)\;dxdy.
\end{align}
\end{remark}
\section{Outlook}
We presented a variational data assimilation approach based on a modified minimizing movement scheme together with a numerical scheme. We belief that the following extensions are interesting future research questions.
\begin{itemize}
    \item \emph{Non-linear mobilities and geometry identification}. Currently, we are able to treat equation of the form \eqref{eq:pde_grad_flow} which have a linear mobility $m(u)=u$. In many application, the mobility might also be non-linear which yields 
    \begin{align*}
        \partial_t u &= \nabla \cdot ( m(u) \nabla \frac{\delta F}{\delta u}),\quad \Omega \times (0,T).
    \end{align*}
    Here, $m$ is a concave function, e.g. $m(u)=u(1-u)$ which appears in models with volume filling \cite{Wrzosek2010_VolumeFilling}, see also \cite{EggPieSch2015a} for model discovery in this context. For such equations, a modified Wasserstein distance is needed which includes the mobility. They can be obtained by replacing the term $m^2/\rho$ in \eqref{eq:Benamou_Brenier} by 
    $$
    \frac{J^2}{m(\rho)},
    $$
    see \cite{Dolbeault_2008,Lisini2010} for details. This clearly requires a modification of the numerical scheme as the proximal operator changes. Independent of the context of this work, an  interesting question is whether it is possible to uniquely determine $m$ from measurements of geodesics with respect to the modified distance.
    \item \emph{Reaction terms and boundary conditions}. Another possible extension to a larger class of PDEs is to include reaction terms. This immediately results in the total mass being no longer conserved. Thus is is not feasible to work in the space of probability measures but one must use non-negative measures instead. In this case, the Wasserstein distance has to be replaced by the Fisher-Rao-type distance which allows for a change of mass, \cite{chizat2018_interpolating}. In the case of non-homogeneous boundary conditions, different modifications are necessary, see \cite{Humptert_PhD} for a formal discussion.
    \item \emph{Random effects}. So far, we remained in a purely deterministic setting, different to the classical approach in data assimilation. It would be interesting to included both random measurement as well as model errors.
    \item \emph{General geometries}. The present scheme is based on a finite difference approximation which makes it difficult to handle more complicated geometries when going to higher dimension. Here, finite volume, \cite{Cances2020_FVSchemeJKO} or finite element discretizations, \cite{Albi_2017}, for the divergence constraint could be used as long as the application of the proximal operators is computationally efficient.
\end{itemize}

\section*{Acknowledgements}
The authors thank G. Heinze (TU Chemnitz) for useful discussions.
\bibliographystyle{alpha}
\bibliography{sample}

\section*{Appendix}
%
In order to compute the larges real root $\rho^*$, we use a similar approach as in \cite{Carrillo2022_PrimalDual} and compute $\rho^*$ explicitly using Cardano's formula. For the convenience of the reader, we provide some detail. 
We start by introducing a new variables $z=x-\rho$, $p=\rho+\lambda$, and $c=\lambda|m|^2/2$. Hence, we look for the largest real root $z^*$ of the cubic equation
\begin{align*}
    z (z+p)^2=c.
\end{align*}
Clearly, $z^*\geq \max\{0,-p\}$, and $z^*$ is the only root in that range. In particular, $z^*+p\geq 0$, whence, $z^*$ is equivalently characterised by the solution of
\begin{align*}
    z+p=\sqrt{c/z},\qquad z>\max\{0,-p\}.
\end{align*}
Multiplying this equation by $\sqrt{z}$ and introducing $y=\sqrt{z}$ and $q=-\sqrt{c}$, we obtain the depressed cubic
\begin{align*}
    y^3 + pt + q=0.
\end{align*}
Defining the discriminant $\Delta=(p/3)^3+(q/2)^2$ and $C=(-q/2+\sqrt{D})^{1/3}$, the three roots are given by
\begin{align*}
    y_1= C-p/(3C),\quad y_2=\xi_1C-p/(3\xi_1C),\quad y_2=\xi_2 C -p/(3\xi_2 C),
\end{align*}
where $\xi_1=(-1+i\sqrt{3})/2$, and $\xi_2=(-1-i\sqrt{3})/2$ are two cube roots of $1$. Note that $C\neq 0$, since $\Re{\sqrt{D}}\geq 0$.

\end{document}